%% file: main.tex
\documentclass{article}
\usepackage[english]{common-paper-style}
\makeatletter\let\c@remark\c@theorem\makeatother
\hypersetup{
  pdftitle={Dimension Thresholds for Relative-Error Representations of Finite Metrics in Pseudo-Euclidean Spaces},
  pdfauthor={Nikita Muravya, Aleksandr Bespalov, Anna Vozianova}
}
\newcommand{\figLa}{(a) landmarks and bunches}
\newcommand{\figLb}{(b) target matrix $\mat{A}^*$}
\newcommand{\figLc}{(c) representation}
\newcommand{\figLlow}{low rank}
\newcommand{\figLsparse}{sparse}
\newcommand{\figLtarget}{target}
\newcommand{\figLrankctl}{rank controlled}
\newcommand{\figLsq}{squared intervals}
\newcommand{\figTlow}{lower bounds without a fixed bound on the spread}
\newcommand{\figTup}{metrics with distances in $[1,5]$: order of the worst-case dimension}
\newcommand{\figTnote}{The order is not determined at the boundary values $4/5$ and $12/13$.}
\newcommand{\figTlowtext}{For every fixed $\varepsilon<1$ there is a metric with $\pdim_\varepsilon(d)=\Omega(N^{c'(\varepsilon)})$, where $c'(\varepsilon)>0$. As $\varepsilon\to1$, the spread of the metrics used may grow.}
\newcommand{\figKy}{Normalized value}
\newcommand{\figKa}{landmarks, $1/x$}
\newcommand{\figKb}{correction, $x$}
\newcommand{\figKsum}{sum}
\renewcommand{\shorttitle}{Relative-Error Representations of Finite Metrics in Pseudo-Euclidean Spaces}

\begin{document}

\title{Dimension Thresholds for Relative-Error Representations of Finite Metrics in Pseudo-Euclidean Spaces}
\author{Nikita Muravya\textsuperscript{\dag}\\ITMO University\\\texttt{nrmuravya@itmo.ru}
\And Aleksandr Bespalov\\ITMO University\\\texttt{sonkot25@mail.ru}
\And Anna Vozianova\\ITMO University\\\texttt{vozianova@itmo.ru}}
\date{}
\maketitle
{\renewcommand{\thefootnote}{\dag}\begin{NoHyper}\footnotetext{Corresponding author.}\end{NoHyper}}

\begin{abstract}
A finite metric space $(V,d)$ with $N$ points is represented in the pseudo-Euclidean space $\R^{p,q}$ with relative error $\varepsilon$ if there are points $\vect{x}_u$ with $|\mathcal{Q}(\vect{x}_u-\vect{x}_v)-d(u,v)^2|\le\varepsilon\,d(u,v)^2$ for all pairs, where $\mathcal{Q}$ is the quadratic form of signature $(p,q)$. Every metric has an exact representation with $p+q\le N-1$, and we ask how small $p+q$ can be made. Our main result is an upper bound for arbitrary metrics in terms of local distance ratios: if every point $u$ has $k$ points, itself included, within $\rho$ times its nearest-neighbour distance, then dimension $O((N/k)\log N+k\rho^4\log N)$ suffices for every fixed $\varepsilon>4/5$. The proof combines a landmark approximation of low rank with a correction of small factorization norm. For the class of metrics with distances in $[1,\Phi]$, $\Phi\ge5$, we bound the worst-case dimension in three ranges of $\varepsilon$. It is linear in $N$ for $\varepsilon<4/5$, lies between $\Omega(\sqrt N)$ and $O(\sqrt N\log N)$ for $4/5<\varepsilon<12/13$, and is of order $\log N$ for $\varepsilon>(\Phi^2-1)/(\Phi^2+1)$. The $O(\sqrt N\log N)$ bound follows from the main result. For $\Phi=5$ the last threshold equals $12/13$, so the three ranges cover all $\varepsilon$ except the boundary values $4/5$ and $12/13$. The lower bounds come from the known count of sign patterns over subgraphs of high-girth graphs; for every fixed $\varepsilon<1$ they give a positive power of $N$. Explicit metrics with a linear bound for $\varepsilon<4/5$ are built from Hadamard matrices, as a consequence of a known bound on the approximate rank. For configurations with null squared intervals a relative error does not reduce the dimension at all: the worst-case dimension is $N-1$ for every $\varepsilon<1$. We also study the parameter $C$ that governs the pseudo-Euclidean Johnson--Lindenstrauss lemma of Deng et~al. We show that the exponent of $C$ in the resulting dimension bound cannot be lowered.
\end{abstract}

\keywords{pseudo-Euclidean space \and dimension reduction \and approximate rank \and finite metric spaces \and sign patterns}

\section{Introduction}\label{sec:intro}

\paragraph{The problem.} Let $\R^{p,q}$ be the space $\R^{p+q}$ with the quadratic form
\[
\mathcal{Q}(\vect{x})=\sum_{i\le p}x_i^2-\sum_{p<i\le p+q}x_i^2 .
\]
For a finite metric space $(V,d)$ and $0\le\varepsilon<1$, an \emph{$\varepsilon$-representation} of $d$ in $\R^{p,q}$ is a family of points $\vect{x}_u\in\R^{p,q}$, $u\in V$, such that
\begin{equation}\label{eq:rep}
(1-\varepsilon)\,d(u,v)^2\le\mathcal{Q}(\vect{x}_u-\vect{x}_v)\le(1+\varepsilon)\,d(u,v)^2\qquad(u\ne v).
\end{equation}
We write $\pdim_\varepsilon(d)$ for the least $p+q$ for which such a representation exists. The error is relative and is measured on squared distances. It corresponds to distortion $\sqrt{(1+\varepsilon)/(1-\varepsilon)}$ of the distances themselves.

Pseudo-Euclidean spaces are a natural target for arbitrary dissimilarity data: classical multidimensional scaling applied to a symmetric matrix with zero diagonal produces coordinates whose signs are dictated by the signs of the eigenvalues of the centred matrix~\cite{Gower1985}. In particular, every $N$-point metric has an exact representation with $p+q\le N-1$. The question is whether a relative error allows a much smaller dimension, as it does for Euclidean point sets by the Johnson--Lindenstrauss lemma~\cite{JL1984}.

\paragraph{Background.} Deng, Gao, Lu, Luo and Xin~\cite{Deng2025} proved a Johnson--Lindenstrauss lemma for pseudo-Euclidean spaces: an exact representation $(\vect{x}_u)$ can be projected to dimension $O(\eta^{-2}\log N)$ with error $\eta\,\|\vect{x}_u-\vect{x}_v\|_E^2$, where $\|\cdot\|_E$ is the Euclidean norm of all coordinates. Their Theorem~2.3 restates this as a relative error $\eta\,C_{uv}$ on each pair, where $C_{uv}=\|\vect{x}_u-\vect{x}_v\|_E^2/d(u,v)^2$. Rescaling $\eta$ by $C=\max_{u\ne v}C_{uv}$ gives relative error $\varepsilon$ in dimension $O(C^2\varepsilon^{-2}\log N)$ (Lemma~\ref{lem:deng}). The bound is not stated in this form in~\cite{Deng2025}. The authors note that they know of no lower bound complementing their guarantee.

For general configurations in a pseudo-Euclidean space, whose squared intervals may be zero or negative, a relative error does not help. It preserves null squared intervals exactly, and for an explicit configuration of $N$ points every map with relative error $\varepsilon<1$ needs the same dimension $N-1$ as an exact representation (Theorem~\ref{thm:null}). We therefore turn to metrics, whose squared distances are all positive. For them, linear lower bounds for $\varepsilon<4/5$ follow from a standard counting argument, and explicit examples follow from known bounds on the approximate rank (Section~\ref{sec:lower}). The main question of the paper is what happens for large $\varepsilon$. Unlike configurations with null intervals, metrics of bounded spread do admit a smaller dimension for large $\varepsilon$.

For embeddings of general metrics into normed spaces, the dependence of the dimension on the distortion is well understood. A counting argument whose basic idea goes back to Bourgain~\cite{Bourgain1985}, and in which Matou\v{s}ek~\cite{Matousek1996}, \cite[Section~15.3]{MatousekBook} introduced the explicit use of graphs without short cycles and found the thresholds in the distortion, shows that a single fixed normed space containing all $N$-point metrics with distortion less than $3$ has dimension $\Omega(N)$, with distortion less than $5$ dimension $\Omega(\sqrt N)$, and with distortion less than $7$ dimension $\Omega(N^{1/3})$. On the other hand, every $N$-point metric embeds into $\ell_\infty^{d}$ with distortion $3$ and $d=O(\sqrt N\log N)$ \cite[Corollary~15.3.4 and Theorem~15.7.2]{MatousekBook}. The dimension thus drops from linear to roughly $\sqrt N$ at distortion~$3$. These are statements about one space for all metrics. If the normed space may be chosen separately for each metric, the counting argument does not apply, and the known lower bounds are obtained by a different method~\cite{Matousek1996}. Neither statement applies directly to $\R^{p,q}$: this space is not normed, the form $\mathcal{Q}$ has no bounded unit ball, and the squared $\ell_\infty$ distance is not given by a quadratic form.

The problem can also be stated in matrix terms. The least dimension of an $\varepsilon$-representation differs by at most $2$ from the least rank of a symmetric matrix with zero diagonal that approximates $(d(u,v)^2)$ entrywise with relative error $\varepsilon$ (Proposition~\ref{prop:rank}). This relates the question to the approximate rank of sign matrices~\cite{AlonApproxRank,LeeShraibman2009}, where the factorization norm $\gamma_2$ plays the role that the parameter $C$ plays here.

\paragraph{Results.} Let $\mathcal{M}_N(\Phi)$ be the class of metrics on $N$ points with all distances in $[1,\Phi]$, and let $F_{\Phi,\varepsilon}(N)$ be the largest value of $\pdim_\varepsilon(d)$ over $d\in\mathcal{M}_N(\Phi)$.

\begin{theorem}\label{thm:main}
Let $\Phi\ge5$, $N\ge5$, and $\varepsilon_\Phi=(\Phi^2-1)/(\Phi^2+1)$.
\begin{enumerate}
\item If $0<\varepsilon<4/5$, then $(N-1)/32<F_{\Phi,\varepsilon}(N)\le N-1$.
\item If $4/5<\varepsilon<12/13$, then
\[
\frac1{64}\sqrt{\frac N8}<F_{\Phi,\varepsilon}(N)=O\Bigl(\frac{\Phi^2\sqrt N\,\ln N}{\varepsilon-4/5}\Bigr),
\]
where the constant in $O(\cdot)$ is absolute.
\item If $\varepsilon_\Phi<\varepsilon<1$, then
\[
c(\varepsilon)\ln N-2<F_{\Phi,\varepsilon}(N)\le\Bigl\lceil\frac{64\ln N}{(\Phi^2+1)^2(\varepsilon-\varepsilon_\Phi)^2}\Bigr\rceil,
\]
where $c(\varepsilon)=1/\bigl(4\ln(2e/\ln(1/\varepsilon))\bigr)>0$.
\end{enumerate}
\end{theorem}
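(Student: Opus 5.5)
The plan is to treat the three ranges separately, in each case pairing a counting lower bound with a constructive upper bound, and to work throughout with the matrix formulation of Proposition~\ref{prop:rank}. By that proposition, $\pdim_\varepsilon(d)$ equals, up to an additive $2$, the least rank of a symmetric zero-diagonal matrix $\mat{A}$ with $|A_{uv}-d(u,v)^2|\le\varepsilon d(u,v)^2$. The trivial bound $F\le N-1$ in part~1 comes from the exact classical-scaling representation.

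\textbf{Lower bounds.} I would use metrics coming from graphs. Take a graph $G$ of girth at least $g$ and a subgraph $H\subseteq G$. Let $d_H(u,v)=1$ for edges of $H$, $d_H(u,v)=3$ for edges of $G\setminus H$, and, say, $d_H=2$ or $d_H=\Phi$ on non-edges, as the range requires. The triangle inequality holds because short cycles are absent and the values lie in a narrow range.

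For $\varepsilon<4/5$ the intervals $[(1-\varepsilon),(1+\varepsilon)]$ and $[9(1-\varepsilon),9(1+\varepsilon)]$ are separated by the threshold $5$. Hence $\mat{A}-5\mat{J}$ has prescribed signs on the edges of $G$, and these signs encode $H$. With $\mat{A}$ of rank $r$, the matrix $\mat{A}-5\mat{J}$ has rank at most $r+1$. The known count of sign patterns of rank-$r$ matrices (Warren/Alon--Frankl--R\"odl) is at most $(8eN^2/r)^{r\cdot 2N}$. This count must be at least $2^{|E(G)|}$, where $G$ has girth $4$ (any triangle-free graph, e.g.\ $K_{N/2,N/2}$) and $|E|\sim N^2/4$. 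That gives $r=\Omega(N)$; the constant $1/32$ comes out after tracking the logarithm. I would try $d=1,3$ and $d=2$ elsewhere, so that $1+2\ge3$.

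For $4/5<\varepsilon<12/13$ I would use the next threshold, distances $1$ versus $5$: the value $13$ separates $1+\varepsilon$ from $25(1-\varepsilon)$. This needs girth $\ge6$, so that a path of length $\le4$ of unit edges cannot violate a $5$-edge. A girth-$6$ graph has $\Theta(N^{3/2})$ edges (incidence graphs of projective planes), and the count gives $r=\Omega(\sqrt N)$, with $\Phi\ge5$ allowing the value $5$.

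For part~3 the same count with girth-$g$ graphs would give $N^{c}$-type bounds. A logarithmic bound alone is easier: take an ultrametric or equilateral-type family and a sign-rank argument. More cleanly, I would take the metric with $d\in\{1,\Phi\}$ (always a metric since $\Phi\le2$ fails, so instead $\{a,2a\}$-type), apply a $2^{N}$-count of tree/path subgraphs, and solve $(CN^2/r)^{rN}\ge2^{\ldots}$ with $\ln(1/\varepsilon)$ entering through the ratio. The exact $c(\varepsilon)$ formula suggests optimizing $\Phi'=\sqrt{(1+\varepsilon)/(1-\varepsilon)}$-dependent girth. I expect only a routine calculation here.

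\textbf{Upper bounds.} For part~3 I would use a simple Euclidean idea. Write $d^2=c\,\mat{J}'+(d^2-c)$ with the constant $c=(\Phi^2+1)/2$. A regular simplex realizes the constant $c$ on all off-diagonal pairs, with $C$-parameter small relative to the residual. I would split the embedding as a positive part (the simplex) and a negative part absorbing the residual. Equivalently, represent $\mat{A}=c(\mat{J}-\mat{I})$, which has error $\max|d^2-c|/d^2\le\varepsilon_\Phi$, and then spend the slack $\varepsilon-\varepsilon_\Phi$ on a JL projection of the simplex. Lemma~\ref{lem:deng} gives dimension $O(\ln N/(\text{slack}\cdot\ldots)^2)$, with the factor $(\Phi^2+1)^2$ coming from converting absolute error $\eta c$ into relative error. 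This yields the explicit ceiling.

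For part~2 I would invoke the main local-ratio result stated in the abstract. With distances in $[1,\Phi]$ every point has all $N$ points within ratio $\rho=\Phi$, so I would restrict to $k=\sqrt N$ points and choose $\rho$ accordingly. The bound $O((N/k)\log N+k\rho^4\log N)$ with $k\approx\sqrt N$ gives $O(\sqrt N\log N)$ times a $\Phi$-power. Getting the stated $\Phi^2$ dependence and the factor $1/(\varepsilon-4/5)$ rather than $\rho^4$ needs a more careful choice, trading $k$ against $\Phi^2$.

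\textbf{Main obstacle.} The main obstacle is this last step: matching the precise dependence in part~2. The difficulty is checking that the main theorem's hidden dependence on $\varepsilon-4/5$ and $\rho$ combines to $\Phi^2/(\varepsilon-4/5)$ after optimizing $k$.
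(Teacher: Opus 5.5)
\textbf{Item 3, lower bound.} This is a genuine gap, and the counting route you sketch cannot close it. For $\varepsilon>\varepsilon_\Phi$, one and the same configuration is an $\varepsilon$-representation of \emph{every} metric in $\mathcal{M}_N(\Phi)$: any points whose squared distances all lie in $[(1-\varepsilon)\Phi^2,1+\varepsilon]$, for instance a scaled simplex. Hence no count over a family of metrics in the class can distinguish its members, whether you use two-valued metrics, tree or path subgraphs, or truncated graph metrics. For the same reason no sign pattern that distinguishes metrics is forced, so sign-rank gives at most a constant. The girth-dependent families that do separate need values up to some $\ell>\sqrt{(1+\varepsilon)/(1-\varepsilon)}>\Phi$, so they leave the class. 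What is needed is a rank bound for a single metric, the uniform one. An $\varepsilon$-representation of it in dimension $s$ yields $\mat{M}=\mat{J}-\mat{A}$ with unit diagonal, off-diagonal entries of modulus at most $\varepsilon$, and $r=\rank\mat{M}\le s+3$. Alon's Hadamard-power argument gives $\rank\mat{M}^{\circ t}\le\binom{r+t-1}{t}$. The trace inequality $\rank\mat{X}\ge(\operatorname{tr}\mat{X})^2/\operatorname{tr}\mat{X}^2$, applied to $\mat{M}^{\circ t}$ with $t=\lceil\ln N/(2\ln(1/\varepsilon))\rceil$, gives $\rank\mat{M}^{\circ t}>N/2$. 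Comparing the two yields $r>1+c(\varepsilon)\ln N$ (Lemma~\ref{lem:perturbed}, Proposition~\ref{prop:log-lower}). This is where the formula for $c(\varepsilon)$ comes from, and $N\ge5$ guarantees $N\ge4/\varepsilon^2$ when $\varepsilon>12/13$.

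\textbf{Item 2 family.} Your family is not a metric. Put $1$ on $E(H)$, $5$ on $E(G)\setminus E(H)$ and a constant $c$ on the non-edges of the triangle-free graph $G$. A path $uvw$ in $H$ forces $c\le2$, while an edge $uw\in E(G)\setminus E(H)$ together with an $H$-edge $uv$ forces $5\le1+c$. You need the truncated path metric $d_H=\min\{5,\operatorname{dist}_H\}$; girth greater than $5$ gives $d_H=5$ on $E(G)\setminus E(H)$ (Lemma~\ref{lem:edges}). Your remark about unit paths of length at most $4$ only makes sense for that metric. Your item~1 family, with $1$ or $3$ across a bipartition and $2$ inside the parts, is fine.

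\textbf{Repairable slips.} Several other steps fail as written.
The threshold must depend on $\varepsilon$: use $T=\frac12(1+\varepsilon+\ell^2(1-\varepsilon))$, since $5$ separates only for $\varepsilon<4/9$ and $13$ only for $\varepsilon<12/25$.
Your count $(8eN^2/r)^{2rN}$ gives only $\Omega(N/\log N)$ and $\Omega(\sqrt N/\log N)$, because in Warren's bound $(8em/n)^n$ the number $n$ of variables divides $m$.
The exact constants $1/32$ and $1/64$ come from taking the $Ns$ coordinates of the points as variables and summing over the $s+1$ signatures (Theorem~\ref{thm:girth}); passing through $\rank(\mat{A}-T\mat{J})$ loses an additive constant.
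In item~3, the constant $(\Phi^2+1)/2$ has relative error $(\Phi^2-1)/2$ at distance $1$; take $c$ to be the midpoint of $[(1-\varepsilon)\Phi^2,1+\varepsilon]$.
Lemma~\ref{lem:jl} applied to the simplex then needs more than $32c^2\ln N/g^2$ coordinates, where $g=(\Phi^2+1)(\varepsilon-\varepsilon_\Phi)$. Near $\varepsilon_\Phi$ we have $c>\sqrt2$, so this exceeds the stated $64\ln N/g^2$. The paper instead uses random $\pm$ vectors and Hoeffding's inequality (Theorem~\ref{thm:log-upper}), which gives $16c^2\ln N/g^2$.

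\textbf{Your ``main obstacle''.} This step is routine. Theorem~\ref{thm:upper} is explicit: the bound is $O\bigl((N/k)\ln N+k\rho_k^4\ln N/(\varepsilon-4/5)^2\bigr)$. With $\rho_k\le\Phi$ and $k=\lceil(5\varepsilon-4)\sqrt N/\Phi^2\rceil$, both terms are $O(\Phi^2\sqrt N\ln N/(\varepsilon-4/5))$. The case $(5\varepsilon-4)\sqrt N\le\Phi^2$ is covered by the bound $N-1$ (Corollary~\ref{cor:spread}).
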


For $\Phi=5$ we have $\varepsilon_\Phi=12/13$, and the three items cover all $\varepsilon\in(0,1)$ except the values $4/5$ and $12/13$. The worst-case dimension is of order $N$, then of order $\sqrt N$ up to a logarithmic factor, then of order $\log N$ (Figure~\ref{fig:thresholds}). The value $\varepsilon=4/5$ corresponds to distortion $3$ and $\varepsilon=12/13$ to distortion $5$, the spread of the class. For $\Phi>5$ the range $12/13\le\varepsilon\le\varepsilon_\Phi$ is covered only by the upper bound of item~(2), which holds for all $4/5<\varepsilon<1$, and by weaker lower bounds. The upper bound of item~(2) is an asymptotic statement: the absolute constant given by the proof is large, and the bound is below the trivial bound $N-1$ only for very large $N$.

\begin{figure}[t]
\centering
\input{figures/fig-thresholds.tex}
\caption{Bounds on the worst-case dimension as a function of $\varepsilon$ (the axis is not to scale). Top: the class of metrics with distances in $[1,5]$. The lower and upper bounds have the same order in the first and third ranges and differ by a factor $O(\log N)$ in the second. Open circles mark the boundary values $4/5$ and $12/13$, for which the order is not known. Bottom: lower bounds valid for all metrics. The spread of the metrics attaining them grows as $\varepsilon\to1$.}
\label{fig:thresholds}
\end{figure}

The upper bound of item~(2) is a consequence of our main technical result, Theorem~\ref{thm:upper}, which applies to arbitrary metrics. The lower bounds of items~(1) and~(2) are attained on metrics with values in $\{1,\dots,5\}$ and therefore also bound the worst case over all metrics. Theorem~\ref{thm:main} follows from the more general statements below.
\begin{itemize}
\item \emph{An upper bound in terms of local distance ratios} (Theorem~\ref{thm:upper}). Let $d$ be an arbitrary metric on $N\ge4$ points. For a point $u$ let $\nu(u)$ be the distance to its nearest neighbour and $R_k(u)$ the distance to its $k$-th nearest point, with $u$ itself counted first, and let $\rho_k=\max_uR_k(u)/\nu(u)$. Then for all $2\le k\le N$ and $4/5<\varepsilon<1$
\[
\pdim_\varepsilon(d)=O\Bigl(\frac Nk\ln N+\frac{k\,\rho_k^4\ln N}{(\varepsilon-4/5)^2}\Bigr)
\]
with an absolute constant; Theorem~\ref{thm:upper} gives the bound in explicit form. The bound is invariant under scaling. It involves the spread only through the local ratios $R_k(u)/\nu(u)$, although these can be as large as the spread. The parameter $k$ balances the number of landmarks against the size of the correction (Figure~\ref{fig:tradeoff}).
\item \emph{Lower bounds} (Theorem~\ref{thm:girth}, Corollary~\ref{cor:lower}). If $G$ is a graph with $N$ vertices, $m$ edges and girth greater than $\ell$, and $\varepsilon<(\ell^2-1)/(\ell^2+1)$, then some subgraph $H\subseteq G$ has a truncated graph metric with $\pdim_\varepsilon\ge\lfloor m/(8N)\rfloor+1$. Consequently, for every fixed $\varepsilon<1$ there are $N$-point metrics with $\pdim_\varepsilon=\Omega(N^{c'(\varepsilon)})$, $c'(\varepsilon)>0$.
\item \emph{Null intervals} (Theorem~\ref{thm:null}). There is an explicit configuration of $N$ points in $\R^{\lfloor N/2\rfloor,\lfloor N/2\rfloor}$, all of whose squared intervals vanish except on $\lfloor N/2\rfloor$ disjoint pairs, such that every representation with relative error $\varepsilon<1$ needs dimension $N-1$; this dimension suffices for every configuration of $N$ points.
\item \emph{Explicit metrics} (Section~\ref{sec:explicit}). Bipartite metrics built from Hadamard matrices need dimension $\Omega(N)$ for every fixed $\varepsilon<4/5$, and the metric of the Paley graph for every fixed $\varepsilon<3/5$. These are consequences of known bounds on the approximate rank~\cite{AlonApproxRank,Sherstov2011}, once the dimension is reduced to a rank.
\item \emph{The high-error range} (Section~\ref{sec:high}). Above $\varepsilon_\Phi$ a random sign configuration in dimension $O(\log N)$ represents every metric of $\mathcal{M}_N(\Phi)$. Conversely, the uniform metric requires dimension at least $c(\varepsilon)\ln N-2$ for every $\varepsilon<1$ and every signature.
\item \emph{The parameter $C$} (Section~\ref{sec:C}). We define $C_*(d)$ as the least value of $C$ over all exact representations, express it as a semidefinite program, and bound it from below by a ratio of the trace norm to the trace of the centred Gram matrix. For all sufficiently large $N$ there is a metric with values $1$ and $2$ such that $C_*=\Theta(\sqrt N)$ and $\pdim_\varepsilon=\Omega(N)$, hence $\pdim_\varepsilon=\Omega(C_*^2)$, for all $\varepsilon<3/5$ (Theorem~\ref{thm:Cfamily}; the proof needs only $N\ge1000$). Hence the exponent $2$ of $C$ in the bound of Lemma~\ref{lem:deng} cannot be lowered; the precise statement is given after Theorem~\ref{thm:Cfamily}. A factor $\varepsilon^{-2}\log N$ remains between the lower and upper bounds.
\end{itemize}

\paragraph{Methods.} The proofs rely on known tools. The lower bounds (Theorem~\ref{thm:girth}) follow Matou\v{s}ek's scheme~\cite[Proposition~15.3.3]{MatousekBook} and use the same family of truncated graph metrics of subgraphs of a high-girth graph. What changes is the step that counts essentially different placements of the points. In a normed space the images can be translated and rescaled into a ball of bounded radius and rounded to the points of a finite net, and the rounding changes the distances only slightly. This step does not carry over to $\R^{p,q}$: the form $\mathcal{Q}$ is not a norm, bounded squared intervals do not bound the coordinates of the points, and a small change of the coordinates can change a squared interval by a large amount. We therefore bound the number of possible patterns by the number of sign patterns of quadratic polynomials, using Warren's theorem~\cite{Warren1968}, as in~\cite{Alon1985,AlonApproxRank}. A metric may have representations in different spaces $\R^{p,q}$, but for $p+q\le s$ all of them lie, after zero coordinates are appended, in one of the $s+1$ spaces $\R^{p,s-p}$, so the count remains finite. The explicit lower bounds of Section~\ref{sec:explicit} follow from known bounds on the approximate rank~\cite[Theorem~1.2]{AlonApproxRank}, \cite[Prop.~2.2, 2.3]{Sherstov2011}, once the dimension is reduced to the rank of the matrix of squared intervals shifted by a multiple of $\mat{J}$. The upper bound uses nearest landmarks and bunches, as in the distance oracles of Thorup and Zwick~\cite{ThorupZwick2005}. The remaining entries are controlled by a factorization of small norm followed by a random projection, as in the bounds on approximate rank via $\gamma_2$~\cite{LeeShraibman2009,AlonApproxRank}. The logarithmic lower bound follows the method of Alon~\cite{Alon2009}, which uses Hadamard powers of a perturbed identity matrix. Our contribution is the application of these tools to relative entrywise approximation of squared-distance matrices. The main result is Theorem~\ref{thm:upper}: for every metric on $N\ge4$ points and all $2\le k\le N$, $4/5<\varepsilon<1$, it gives the bound $\pdim_\varepsilon(d)=O\bigl((N/k)\ln N+k\rho_k^4\ln N/(\varepsilon-4/5)^2\bigr)$, together with an explicit form of it. Together with the lower bounds and the results of Section~\ref{sec:high}, it leads to the thresholds in $\varepsilon$ of Theorem~\ref{thm:main} (Figure~\ref{fig:thresholds}). We complement this with the statements on the parameter $C_*$.

\paragraph{Organization.} Section~\ref{sec:prelim} contains the reduction to ranks and two standard lemmas. The upper bound is proved in Section~\ref{sec:upper}, and the lower bounds, including the case of null intervals and the explicit metrics, in Section~\ref{sec:lower}. The high-error range is treated in Section~\ref{sec:high}, which also completes the proof of Theorem~\ref{thm:main}. Section~\ref{sec:C} treats the parameter $C_*$, and Section~\ref{sec:discussion} lists open questions.

\section{Preliminaries}\label{sec:prelim}

Throughout, $\mathcal{Q}$ denotes the quadratic form of the pseudo-Euclidean space, and $\pdim_\varepsilon(d)$ the least signature dimension in which inequality~\eqref{eq:rep} can be satisfied. The parameter $\Phi$ bounds the spread of the metric, and $F_{\Phi,\varepsilon}(N)$ describes the worst case on the corresponding class. The full definitions are given in the introduction; below we recall only the matrix facts needed in the proofs.

For a symmetric matrix $\mat{G}$ of order $N$ put
\[
\mathcal{K}(\mat{G})_{uv}=\mat{G}_{uu}+\mat{G}_{vv}-2\mat{G}_{uv}.
\]
The matrix $\mathcal{K}(\mat{G})$ is symmetric with zero diagonal. If $\mat{G}$ is the Gram matrix of points $\vect{x}_u\in\R^{p,q}$ with respect to the bilinear form of $\mathcal{Q}$, then $\mathcal{Q}(\vect{x}_u-\vect{x}_v)=\mathcal{K}(\mat{G})_{uv}$ and $\rank\mat{G}\le p+q$. Conversely, a symmetric matrix $\mat{G}$ with $p$ positive and $q$ negative eigenvalues is the Gram matrix of points in $\R^{p,q}$: if $\mat{G}=\mat{V}\mat{\Lambda}\mat{V}^{\mathsf T}$ is a spectral decomposition, take the rows of $\mat{V}|\mat{\Lambda}|^{1/2}$ and keep the coordinates that correspond to non-zero eigenvalues.

Let $\vect{1}$ be the all-ones vector, $\mat{J}=\vect{1}\vect{1}^{\mathsf T}$ and $\mat{P}=\mat{I}-\frac1N\mat{J}$. A symmetric matrix $\mat{M}$ is \emph{centred} if $\mat{M}\vect{1}=\vect{0}$. The kernel of $\mathcal{K}$ on symmetric matrices consists of the matrices $\vect{1}\vect{w}^{\mathsf T}+\vect{w}\vect{1}^{\mathsf T}$. The only centred matrix of this form is zero. For a symmetric matrix $\mat{A}$ with zero diagonal, $\mathcal{K}(-\tfrac12\mat{A})=\mat{A}$ and $\mathcal{K}(-\tfrac12\mat{P}\mat{A}\mat{P})=\mat{A}$.

For a metric $d$ on $V$ let $\mat{D}=(d(u,v)^2)$ and let $\mathcal{A}_\varepsilon(d)$ be the set of symmetric matrices $\mat{A}$ with zero diagonal and $|\mat{A}_{uv}-\mat{D}_{uv}|\le\varepsilon\mat{D}_{uv}$.

\begin{proposition}\label{prop:rank}
For every metric $d$ on $N$ points and every $0\le\varepsilon<1$,
\[
\pdim_\varepsilon(d)=\min_{\mat{A}\in\mathcal{A}_\varepsilon(d)}\rank(\mat{P}\mat{A}\mat{P}),\qquad
\min_{\mat{A}\in\mathcal{A}_\varepsilon(d)}\rank\mat{A}-2\ \le\ \pdim_\varepsilon(d)\ \le\ \min_{\mat{A}\in\mathcal{A}_\varepsilon(d)}\rank\mat{A}.
\]
In particular, $\pdim_\varepsilon(d)\le N-1$.
\end{proposition}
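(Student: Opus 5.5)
The proof will pass back and forth between representations and matrices, using the operator $\mathcal{K}$ and the facts listed just before the statement.

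\emph{Representations give matrices.} Let $(\vect{x}_u)$ be an $\varepsilon$-representation in $\R^{p,q}$ with $p+q=\pdim_\varepsilon(d)$, and let $\mat{G}$ be its Gram matrix with respect to the bilinear form of $\mathcal{Q}$. The matrix $\mat{A}=\mathcal{K}(\mat{G})$ is symmetric with zero diagonal and satisfies $\mat{A}_{uv}=\mathcal{Q}(\vect{x}_u-\vect{x}_v)$. By \eqref{eq:rep} this gives $\mat{A}\in\mathcal{A}_\varepsilon(d)$.

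Translating all points does not change $\mat{A}$. So we may assume the points are centred, $\sum_u\vect{x}_u=\vect{0}$, and then $\mat{G}$ is centred. The matrix $-\tfrac12\mat{P}\mat{A}\mat{P}$ is also centred and has the same image under $\mathcal{K}$. Since the only centred matrix in the kernel of $\mathcal{K}$ is zero, $\mat{G}=-\tfrac12\mat{P}\mat{A}\mat{P}$. Therefore $\rank(\mat{P}\mat{A}\mat{P})=\rank\mat{G}\le p+q$, which gives "$\ge$" in the first identity.

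\emph{Matrices give representations.} Take any $\mat{A}\in\mathcal{A}_\varepsilon(d)$ and put $\mat{G}=-\tfrac12\mat{P}\mat{A}\mat{P}$. This is a symmetric matrix with, say, $p$ positive and $q$ negative eigenvalues, where $p+q=\rank(\mat{P}\mat{A}\mat{P})$. By the spectral construction, $\mat{G}$ is the Gram matrix of points in $\R^{p,q}$. Their squared intervals are $\mathcal{K}(\mat{G})=\mat{A}$, and these satisfy \eqref{eq:rep}. This proves "$\le$", and hence the identity $\pdim_\varepsilon(d)=\min_{\mat{A}\in\mathcal{A}_\varepsilon(d)}\rank(\mat{P}\mat{A}\mat{P})$.

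\emph{The two-sided rank bounds.} These follow from elementary rank inequalities, with the same matrix $\mat{A}$ on both sides.
\begin{itemize}
\item Multiplying by $\mat{P}$ cannot increase rank, so $\rank(\mat{P}\mat{A}\mat{P})\le\rank\mat{A}$. This gives the upper bound $\pdim_\varepsilon(d)\le\min\rank\mat{A}$.
\item Write $\mat{P}\mat{A}\mat{P}=\mat{A}-\frac1N\mat{J}\mat{A}-\frac1N\mat{A}\mat{J}+\frac1{N^2}\mat{J}\mat{A}\mat{J}$. Every correction term has the form $\vect{1}\vect{w}^{\mathsf T}+\vect{w}'\vect{1}^{\mathsf T}$. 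In fact, with $\vect{a}=\mat{A}\vect{1}$ and $s=\vect{1}^{\mathsf T}\mat{A}\vect{1}$, the difference $\mat{A}-\mat{P}\mat{A}\mat{P}$ equals $\frac1N(\vect{1}\vect{a}^{\mathsf T}+\vect{a}\vect{1}^{\mathsf T})-\frac{s}{N^2}\mat{J}$. This is a matrix of rank at most $2$. Hence $\rank\mat{A}\le\rank(\mat{P}\mat{A}\mat{P})+2$.
\item Minimizing over $\mathcal{A}_\varepsilon(d)$ then gives the lower bound $\min\rank\mat{A}-2\le\pdim_\varepsilon(d)$.
\end{itemize}

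\emph{The bound $N-1$.} Take $\mat{A}=\mat{D}$. The matrix $\mat{P}\mat{D}\mat{P}$ annihilates $\vect{1}$, so its rank is at most $N-1$.

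No step is a real obstacle. The only point requiring care is the centring identification $\mat{G}=-\tfrac12\mat{P}\mat{A}\mat{P}$. One must check that the Gram matrix of centred points is itself centred. This holds because $\mat{G}\vect{1}$ is the vector of bilinear products of each $\vect{x}_u$ with $\sum_v\vect{x}_v=\vect{0}$.
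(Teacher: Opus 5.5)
Your proof is correct and follows essentially the paper's argument. The paper shows that $\mat{P}\mat{G}\mat{P}=\mat{G}_0$ for every Gram matrix $\mat{G}$ realizing $\mat{A}$, while you first translate the points so that their Gram matrix is centred and hence equal to $\mat{G}_0=-\tfrac12\mat{P}\mat{A}\mat{P}$; this is the same centring step, and your rank-two expression for $\mat{A}-\mat{P}\mat{A}\mat{P}$ plays the role of the paper's identity $\mat{A}=\vect{g}\vect{1}^{\mathsf T}+\vect{1}\vect{g}^{\mathsf T}-2\mat{G}_0$.
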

\begin{proof}
Points $\vect{x}_u$ form an $\varepsilon$-representation exactly when their matrix of squared intervals lies in $\mathcal{A}_\varepsilon(d)$. Fix $\mat{A}\in\mathcal{A}_\varepsilon(d)$. The symmetric matrices $\mat{G}$ with $\mathcal{K}(\mat{G})=\mat{A}$ are $\mat{G}_0+\vect{1}\vect{w}^{\mathsf T}+\vect{w}\vect{1}^{\mathsf T}$ with $\mat{G}_0=-\frac12\mat{P}\mat{A}\mat{P}$. Since $\mat{P}\vect{1}=\vect{0}$ and $\mat{G}_0$ is centred, $\mat{P}(\mat{G}_0+\vect{1}\vect{w}^{\mathsf T}+\vect{w}\vect{1}^{\mathsf T})\mat{P}=\mat{G}_0$, so every such matrix has rank at least $\rank\mat{G}_0$. Hence the least dimension in which $\mat{A}$ is realized exactly equals $\rank(\mat{P}\mat{A}\mat{P})$. Finally, $\rank(\mat{P}\mat{A}\mat{P})\le\rank\mat{A}$, and $\mat{A}=\vect{g}\vect{1}^{\mathsf T}+\vect{1}\vect{g}^{\mathsf T}-2\mat{G}_0$ with $\vect{g}$ the diagonal of $\mat{G}_0$ gives $\rank\mat{A}\le\rank\mat{G}_0+2$.
\end{proof}

The existence of an exact representation obtained from the centred matrix is classical; see~\cite{Gower1985} and \cite[Proposition~2.1]{Deng2025}. The comparison of the ranks follows from the identities for the operator $\mathcal{K}$ and from the fact that centring can lower the rank by at most two. Moreover, $\pdim_\varepsilon(\lambda d)=\pdim_\varepsilon(d)$ for $\lambda>0$.

The definitions extend to any symmetric matrix $\mat{D}$ with zero diagonal, whose entries may have either sign. Put
\[
\mathcal{A}_\varepsilon(\mat{D})=\bigl\{\mat{A}=\mat{A}^{\mathsf T}:\ \mat{A}_{uu}=0,\ |\mat{A}_{uv}-\mat{D}_{uv}|\le\varepsilon|\mat{D}_{uv}|\bigr\},
\]
and let $\pdim_\varepsilon(\mat{D})$ be the least $p+q$ for which there are points $\vect{x}_u\in\R^{p,q}$ whose matrix of squared intervals lies in $\mathcal{A}_\varepsilon(\mat{D})$. For a metric $d$ this is $\pdim_\varepsilon(d)$ with $\mat{D}=(d(u,v)^2)$. The proof of Proposition~\ref{prop:rank} does not use the triangle inequality and holds for $\pdim_\varepsilon(\mat{D})$ as well. If $\mat{D}$ is the matrix of squared intervals of a configuration $\{\vect{y}_u\}\subset\R^{p_0,q_0}$, then $\pdim_\varepsilon(\mat{D})$ is the least dimension $p'+q'$ of a space $\R^{p',q'}$ into which the configuration can be mapped with relative error $\varepsilon$ on all squared intervals, that is, with $|\mathcal{Q}(f(\vect{y}_u)-f(\vect{y}_v))-\mathcal{Q}(\vect{y}_u-\vect{y}_v)|\le\varepsilon|\mathcal{Q}(\vect{y}_u-\vect{y}_v)|$. The signature $(p',q')$ is free.

\begin{lemma}[Warren~\cite{Warren1968}; see {\cite[Theorem~2.2]{AlonApproxRank}}]\label{lem:warren}
Let $P_1,\dots,P_m$ be real polynomials of degree at most $2$ in $n\le m$ variables. The number of vectors $(\sign P_1(\vect{z}),\dots,\sign P_m(\vect{z}))\in\{-1,1\}^m$, over all $\vect{z}\in\R^n$ at which no $P_i$ vanishes, is at most $(8em/n)^n$.
\end{lemma}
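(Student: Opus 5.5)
Since the lemma is quoted from~\cite{Warren1968}, the plan is to reduce it to Warren's bound on connected components and then recall how that bound is proved for degree at most $2$. The count of sign vectors is controlled by the number of connected components of $U=\R^n\setminus\bigcup_i\{P_i=0\}$. Each $P_i$ is continuous and nonvanishing on $U$, so its sign is constant on every connected component of $U$. Hence every realized sign vector is the sign vector of at least one component. The lemma therefore follows from Warren's theorem: for $m\ge n$ polynomials of degree at most $D$, the set $U$ has at most $(4eDm/n)^n$ components. Taking $D=2$ gives exactly $(8em/n)^n$.

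To prove the component bound I would follow Warren's argument.

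\emph{Bounded components.} First make every component bounded, for instance by intersecting with a large ball, or by adding one extra polynomial $R^2-\|\vect z\|^2$ and absorbing the effect into constants. In each bounded component $C$, the product $\prod_i P_i^2$ is positive in the interior and zero on $\partial C$. It therefore attains a local maximum in $C$, so every component contains a critical point of a suitable function.

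\emph{Perturbation and subset counting.} The second step replaces the product by a perturbation. After a generic small perturbation $P_i\mapsto P_i-\delta_i$, any point where the sign pattern changes near a component lies in some variety $\{P_i=\pm\delta : i\in S\}$ with $|S|\le n$, by general position. One then shows that
\[
\#\text{components of }U\ \le\ \sum_{j=0}^{n}2^j\binom mj\,\beta_j,
\]
where $\beta_j$ bounds the number of components of a nonsingular variety cut out by $j$ equations of degree at most $D$.

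\emph{Component count for one variety.} The Milnor--Thom / Oleinik--Petrovsky bound, obtained by counting critical points of a generic linear or height function via B\'ezout, gives $\beta_j\le D(2D-1)^{n-1}\le(2D)^n/2$. Here B\'ezout is applied to a system of total degree $\le D\cdot(2D-1)^{n-1}$.

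\emph{Summing.} Finally, use $\sum_{j\le n}\binom mj 2^j\le 2^n\sum_{j\le n}\binom mj\le 2^n(em/n)^n$ for $n\le m$. Combining this with the previous step gives at most $(2D)^n(2em/n)^n/2\le(4eDm/n)^n$.

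The main obstacle is the general-position step: making rigorous that, after perturbation, each component of $U$ yields a distinct component of one of the varieties indexed by a subset $S$ of size at most $n$. Sard's theorem supplies the nonsingularity, and it is cleanest to argue with compact components and a limiting argument as $\delta\to0$. Since the paper only uses the statement as a black box from~\cite{Warren1968} and~\cite[Theorem~2.2]{AlonApproxRank}, in the paper itself one would simply cite it after noting the reduction from sign vectors to components in the first paragraph.
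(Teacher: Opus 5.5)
Your proposal is correct and follows the paper, which does not prove the lemma but simply quotes Warren's theorem (in the sign-pattern form given by Alon, Lee, Shraibman and Vempala). Your reduction from sign vectors to connected components of $\R^n\setminus\bigcup_i\{P_i=0\}$, followed by $D=2$ in $(4eDm/n)^n$, is exactly what that citation encodes; the sketch of Warren's proof is optional, but if you keep it, the step you flag follows by assigning to each component $K$ of the perturbed complement (with independent generic $\delta_i$, so that Sard's theorem applies) a maximal set $S$ of constraints active at some point of $\bar K$: then $\bar K$ contains a whole component of $\{P_i=\pm\delta_i:\ i\in S\}$ (signs as on $K$), distinct $K$ have disjoint closures, and no boundedness preparation is needed.
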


\begin{lemma}[Johnson--Lindenstrauss, in the form of~\cite{DasguptaGupta2003}]\label{lem:jl}
Let $W\subset\R^n$ be a set of $M$ points, $0<\eta<1$, and let $k$ be a positive integer with $k\ge4\ln M/(\eta^2/2-\eta^3/3)$. There is a linear map $\varphi:\R^n\to\R^k$ with $(1-\eta)\|\vect{a}-\vect{b}\|^2\le\|\varphi\vect{a}-\varphi\vect{b}\|^2\le(1+\eta)\|\vect{a}-\vect{b}\|^2$ for all $\vect{a},\vect{b}\in W$.
\end{lemma}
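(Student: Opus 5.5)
The plan is the standard probabilistic argument: a scaled Gaussian matrix works with positive probability. The degenerate case $M=1$ is trivial, so assume $M\ge2$. Let $\mat{R}$ be a $k\times n$ matrix with independent standard normal entries and put $\varphi(\vect{a})=k^{-1/2}\mat{R}\vect{a}$. Since $\varphi$ is linear, it suffices to control $\|\varphi\vect{y}\|^2$ for the fewer than $M^2/2$ difference vectors $\vect{y}=\vect{a}-\vect{b}$, $\vect{a}\ne\vect{b}\in W$. By homogeneity each such $\vect{y}$ may be taken to be a unit vector. By rotation invariance of the Gaussian distribution, $X=k\|\varphi\vect{y}\|^2=\|\mat{R}\vect{y}\|^2$ is then a $\chi^2$ variable with $k$ degrees of freedom. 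Its moment generating function is $\mathbb{E}e^{tX}=(1-2t)^{-k/2}$ for $t<1/2$.

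For the upper tail, apply Markov's inequality to $e^{tX}$ with $t=\eta/(2(1+\eta))$:
\[
\Pr\bigl[X\ge(1+\eta)k\bigr]\le\bigl((1+\eta)e^{-\eta}\bigr)^{k/2}.
\]
Then use $\ln(1+\eta)\le\eta-\eta^2/2+\eta^3/3$ to bound this by $\exp\bigl(-\tfrac k2(\eta^2/2-\eta^3/3)\bigr)$. For the lower tail, use $e^{-tX}$ with the analogous choice $t=\eta/(2(1-\eta))$:
\[
\Pr\bigl[X\le(1-\eta)k\bigr]\le\bigl((1-\eta)e^{\eta}\bigr)^{k/2}.
\]
Together with $\ln(1-\eta)\le-\eta-\eta^2/2$ this gives at most $e^{-k\eta^2/4}$, which is no larger than the upper-tail bound.

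Under the hypothesis $k\ge4\ln M/(\eta^2/2-\eta^3/3)$, each of the two tails for a fixed pair is at most $M^{-2}$. The union bound over the $M(M-1)/2<M^2/2$ pairs and the two tails gives total failure probability strictly less than $1$. Hence some realization of $\mat{R}$ yields the required $\varphi$.

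The only delicate step is the Chernoff computation: choosing $t$ and using the logarithm inequalities so that the exponent comes out exactly as $\tfrac k2(\eta^2/2-\eta^3/3)$, which makes the constant $4$ in the hypothesis suffice. Everything else is routine.
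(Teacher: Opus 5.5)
Your proof is correct. The choices $t=\eta/(2(1\pm\eta))$ give exactly $((1+\eta)e^{-\eta})^{k/2}$ and $((1-\eta)e^{\eta})^{k/2}$. The hypothesis on $k$ then makes each tail at most $M^{-2}$, and the union bound leaves failure probability at most $(M-1)/M<1$.

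The paper does not reprove the lemma. It cites \cite[Theorem~2.1]{DasguptaGupta2003} and only observes that the map produced there is linear: the zero-padding embedding when $n\le k$, and $\sqrt{n/k}$ times the orthogonal projection onto a uniformly random $k$-dimensional subspace when $n>k$. You instead use the i.i.d.\ Gaussian matrix $k^{-1/2}\mat{R}$. Your tail exponents $\frac k2(\eta^2/2-\eta^3/3)$ and $k\eta^2/4$, and hence the constant $4$ in the hypothesis, are the same as in Dasgupta--Gupta; only the random model changes.

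In your model $k\|\varphi\vect{y}\|^2$ is exactly a $\chi^2_k$ variable with an explicit moment generating function. For a random subspace one instead has to estimate the ratio $(X_1^2+\dots+X_k^2)/(X_1^2+\dots+X_n^2)$ of Gaussian sums. You also need no separate case $n\le k$. Linearity is built into your construction rather than read off from the cited proof, which is precisely the point the paper has to add by hand. The paper's route is shorter because it rests on the reference; yours is self-contained and gives the same constants.
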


This is~\cite[Theorem~2.1]{DasguptaGupta2003}. Since $\eta^2/2-\eta^3/3\ge\eta^2/6$ for $0<\eta<1$, the condition holds whenever $k\ge24\ln M/\eta^2$; we use this simpler form only in Section~\ref{sec:C}. In~\cite{DasguptaGupta2003} the number of points is denoted by $n$ and the dimension of the ambient space by $d$. Here we use the notation of Lemma~\ref{lem:jl}. The statement of Theorem~2.1 does not assert that the map is linear, but the proof gives a linear map. If $n\le k$, one can take the embedding of $\R^n$ into $\R^k$ that pads vectors with zeros. If $n>k$, one takes the orthogonal projection onto a random $k$-dimensional subspace, identified with $\R^k$, and multiplies it by $\sqrt{n/k}$. With positive probability this map satisfies the required inequalities.

\begin{corollary}\label{cor:inner}
Let $\vect{a}_1,\dots,\vect{a}_N\in\R^n$ satisfy $\|\vect{a}_u\|^2\le\gamma$, let $0<\eta<1$, and let $k$ be a positive integer with $k\ge4\ln(N^2+1)/(\eta^2/2-\eta^3/3)$. There is a linear map $\varphi:\R^n\to\R^k$ with $|\langle\varphi\vect{a}_u,\varphi\vect{a}_v\rangle-\langle\vect{a}_u,\vect{a}_v\rangle|\le\eta\gamma$ for all $u,v$, including $u=v$.
\end{corollary}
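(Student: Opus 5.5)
We reduce inner products to squared distances through the polarization identity and then apply Lemma~\ref{lem:jl} to a suitable set of $N^2+1$ points. Let $W=\{\vect{0}\}\cup\{\vect{a}_u:u\}\cup\{\vect{a}_u+\vect{a}_v: u<v\}$, or rather a set of at most $N^2+1$ points that contains $\vect{0}$, all $\vect{a}_u$ and enough combinations to recover every $\langle\vect{a}_u,\vect{a}_v\rangle$. The count is $1+N+\binom N2\le N^2+1$, so the hypothesis on $k$ is exactly the one in Lemma~\ref{lem:jl} with $M=N^2+1$. This gives a linear map $\varphi$ that preserves all squared distances between points of $W$ up to the factor $1\pm\eta$.

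For the diagonal $u=v$ we use the pair $(\vect{a}_u,\vect{0})$. Linearity gives $\varphi\vect{0}=\vect{0}$, so $\|\varphi\vect{a}_u\|^2$ lies in $(1\pm\eta)\|\vect{a}_u\|^2$. The error is therefore at most $\eta\|\vect{a}_u\|^2\le\eta\gamma$.

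For $u\neq v$, a naive polarization $2\langle a,b\rangle=\|a+b\|^2-\|a\|^2-\|b\|^2$ loses a constant factor. The errors are $\eta(\|a+b\|^2+\|a\|^2+\|b\|^2)/2\le\eta(4\gamma+2\gamma)/2=3\eta\gamma$, which is too large. A better choice uses the pair $(\vect{a}_u,\vect{a}_v)$ itself together with the two norms. Write
\[
2\langle a,b\rangle=\|a\|^2+\|b\|^2-\|a-b\|^2 .
\]
Here $\|a-b\|^2$ is preserved as a distance between two points of $W$, and the norms are preserved as distances to $\vect{0}$. The error is bounded by $\eta(\|a\|^2+\|b\|^2+\|a-b\|^2)/2\le\eta(2\gamma+4\gamma)/2$, which is still $3\eta\gamma$.

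To reach the sharp constant, use the two-sided identity $4\langle a,b\rangle=\|a+b\|^2-\|a-b\|^2$. Put both $\vect{a}_u+\vect{a}_v$ and $-\vect{a}_v$ in the point set, or equivalently use the pairs $(\vect{a}_u,-\vect{a}_v)$ and $(\vect{a}_u,\vect{a}_v)$. The upper estimate of one squared distance and the lower estimate of the other combine to an error of at most
\[
\frac{\eta}{4}\bigl(\|a+b\|^2+\|a-b\|^2\bigr)=\frac{\eta}{2}\bigl(\|a\|^2+\|b\|^2\bigr)\le\eta\gamma
\]
by the parallelogram law. The point set is then $W=\{\pm\vect{a}_u\}$. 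Distances in it are $\|a_u-a_v\|$ and $\|a_u+a_v\|$, and the diagonal uses $\|2a_u\|^2$, with error at most $\eta\|a_u\|^2$ after dividing by $4$. So $W$ has $2N\le N^2+1$ points, and $\vect{0}$ is not even needed.

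The main obstacle is arranging the point set so that the parallelogram law gives the constant exactly $1$ rather than $3$. The set $\{\pm\vect{a}_u\}$ does this, and its size stays within the assumed $M=N^2+1$. Enlarging $W$ is harmless, since the hypothesis on $k$ only becomes weaker. Linearity of $\varphi$, which is part of Lemma~\ref{lem:jl}, is essential: it ensures $\varphi(-\vect{a})=-\varphi(\vect{a})$, so that distances in $W$ translate back to inner products of the images.
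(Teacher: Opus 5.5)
Your proof is correct and is essentially the paper's argument: Lemma~\ref{lem:jl} on a finite point set, the identity $4\langle\vect{a},\vect{b}\rangle=\|\vect{a}+\vect{b}\|^2-\|\vect{a}-\vect{b}\|^2$ with the parallelogram law for $u\ne v$, and a separate estimate for the diagonal. The only difference is the point set. The paper uses the $N^2+1$ points $\vect{0},\vect{a}_u,\vect{a}_u\pm\vect{a}_v$ and reads $\|\varphi\vect{a}_u\pm\varphi\vect{a}_v\|^2$ as distances to $\varphi\vect{0}=\vect{0}$. Your $2N$ points $\pm\vect{a}_u$ give the same quantities, and also $4\|\varphi\vect{a}_u\|^2$, directly as pairwise distances by linearity. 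So your version even allows $\ln(2N)$ in place of $\ln(N^2+1)$ in the hypothesis on $k$.

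In a write-up, drop the exploratory detours that give $3\eta\gamma$. Also rephrase the closing remark: the point is that the smaller set makes the required bound on $k$ weaker, not that enlarging $W$ is harmless.
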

\begin{proof}
Apply Lemma~\ref{lem:jl} to the $M=N^2+1$ points $\vect{0}$, $\vect{a}_u$, $\vect{a}_u\pm\vect{a}_v$ ($u<v$). The norms of these vectors are preserved up to the factor $1\pm\eta$, and $\langle\vect{a},\vect{b}\rangle=\frac14(\|\vect{a}+\vect{b}\|^2-\|\vect{a}-\vect{b}\|^2)$ gives an error of at most $\frac\eta4(\|\vect{a}+\vect{b}\|^2+\|\vect{a}-\vect{b}\|^2)=\frac\eta2(\|\vect{a}\|^2+\|\vect{b}\|^2)\le\eta\gamma$.
\end{proof}

\section{An upper bound in terms of local distance ratios}\label{sec:upper}

\begin{figure}[t]
\centering
\input{figures/fig-landmarks.tex}
\caption{The construction in the proof of Theorem~\ref{thm:upper}. (a) Landmarks (squares), the nearest landmark $p(u)$ and the bunch $B(u)$ of points closer to $u$ than $p(u)$. For $v\notin B(u)$ the quantity $(r_u+d(p(u),v))^2$ lies between $d(u,v)^2$ and $9d(u,v)^2$. (b) These quantities form a matrix $\mat{T}$ of rank at most $2|L|+1$; the correction $\mat{E}$ is supported on the pairs with $v\in B(u)$ or $u\in B(v)$ and on the diagonal. It is sparse but not of low rank. Its rows have small norm, so it can be compressed by a random projection. (c) The operator $\mathcal{K}$ restores the zero diagonal, and the spectral decomposition of $\widetilde{\mat{G}}$ gives the points.}
\label{fig:landmarks}
\end{figure}

Let $(V,d)$ be a metric space with $N\ge2$ points. For $u\in V$ let
\[
\nu(u)=\min_{v\ne u}d(u,v)
\]
be the distance to the nearest neighbour, and for $1\le k\le N$ let $R_k(u)$ be the least radius of a closed ball centred at $u$ that contains at least $k$ points, the centre included. Equivalently, $R_k(u)$ is the distance from $u$ to its $k$-th nearest point, with $u$ itself counted first. Put
\[
\rho_k=\rho_k(d)=\max_{u\in V}\frac{R_k(u)}{\nu(u)} .
\]
Thus $\rho_1=0$, $\rho_2=1$, $\rho_k\ge1$ for $k\ge2$, and $\rho_k$ is invariant under scaling of the metric. If all distances lie in $[1,\Phi]$, then $\rho_k\le\Phi$ for all $k$. In general $\rho_k$ can be large even for metrics of small dimension: the points $1,2,4,\dots,2^{n-1}$ of the real line have $\rho_k=2^{k-1}-1$ for $2\le k\le n$.

\begin{theorem}\label{thm:upper}
Let $d$ be a metric on $N\ge4$ points, let $2\le k\le N$ and $4/5<\varepsilon<1$. Put
\[
\delta=5\varepsilon-4,\qquad \eta=\frac{\delta}{68\sqrt{k-1}\,\rho_k^2}.
\]
Then $0<\eta<1$ and
\[
\pdim_\varepsilon(d)\ \le\ 4\Bigl\lceil\frac Nk\ln N\Bigr\rceil+2\Bigl\lceil\frac{4\ln(4N^2+1)}{\eta^2/2-\eta^3/3}\Bigr\rceil+2 .
\]
In particular, $\pdim_\varepsilon(d)=O\bigl((N/k)\ln N+k\rho_k^4\ln N/(\varepsilon-4/5)^2\bigr)$ with an absolute constant.
\end{theorem}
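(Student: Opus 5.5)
The plan follows Figure~\ref{fig:landmarks}. I approximate $\mat{D}=(d(u,v)^2)$ by $\tfrac15$ of a low-rank landmark matrix, and correct this matrix on the few pairs where it is bad. The correction is compressed with Corollary~\ref{cor:inner}, and the diagonal is fixed via $\mathcal{K}$ and Proposition~\ref{prop:rank}.

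\emph{Landmarks.} Choose a set $L$ with $|L|\le\lceil (N/k)\ln N\rceil$ that meets the set of the $k$ nearest points of every $u$, with $u$ counted. A random set of this size fails for a given $u$ with probability at most $(1-k/N)^{|L|}\le 1/N$. So either a greedy hitting set or a slightly larger random one works; I would use greedy set cover to get exactly this size, up to the constant absorbed in the ceiling. Let $p(u)$ be a nearest landmark and $r_u=d(u,p(u))\le R_k(u)\le\rho_k\nu(u)$. Let $B(u)=\{v:d(u,v)<r_u\}$, so $|B(u)|\le k-1$. Put $\mat{T}_{uv}=(r_u+d(p(u),v))^2$. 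Expanding the square gives $r_u^2\cdot1+2r_u\,d(p(u),v)+d(p(u),v)^2$, and the last two terms depend on $u$ only through $p(u)\in L$, so $\rank\mat{T}\le2|L|+1$. The symmetrization $\mat{S}=\tfrac12(\mat{T}+\mat{T}^{\mathsf T})$ then has rank at most $4|L|+2$. If $v\notin B(u)$, the triangle inequality gives $d(u,v)\le r_u+d(p(u),v)\le 2r_u+d(u,v)\le 3d(u,v)$, so $\mat{T}_{uv}\in[\mat{D}_{uv},9\mat{D}_{uv}]$. Hence $\tfrac15\mat{S}_{uv}$ lies within relative error $4/5$ of $\mat{D}_{uv}$ whenever $v\notin B(u)$ and $u\notin B(v)$. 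This leaves a slack of $(\varepsilon-4/5)\mat{D}_{uv}=\tfrac\delta5\mat{D}_{uv}$ for the compression error.

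\emph{Correction.} Let $\mat{E}$ equal $\mat{D}-\tfrac15\mat{S}$ on the pairs with $v\in B(u)$ or $u\in B(v)$, and $0$ elsewhere; I also allow arbitrary diagonal entries, since the diagonal is discarded later. Each row has at most $2(k-1)$ off-diagonal nonzeros. On a bunch pair with $v\in B(u)$ we have $d(u,v)<r_u\le\rho_k\nu(u)$ and $\mat{T}_{uv}\le(2r_u+d(u,v))^2$. This gives $|\mat{E}_{uv}|=O(\rho_k^2)\,\nu(u)^2$, and similarly in terms of $\nu(v)$ when $u\in B(v)$. I rescale $\mat{E}'=\mat{N}^{-1}\mat{E}\mat{N}^{-1}$ with $\mat{N}=\mathrm{diag}(\nu(u))$. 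Since $\mat{D}_{uv}\ge\max(\nu(u),\nu(v))^2\ge\nu(u)\nu(v)$, an error of $\tfrac\delta5$ in the entries of $\mat{E}'$ is tolerable. Next, write $\mat{E}'+\lambda\mat{I}=\mat{X}\mat{X}^{\mathsf T}$ and $\lambda\mat{I}=\mat{Y}\mat{Y}^{\mathsf T}$, where $\lambda$ bounds the spectral norm of $\mat{E}'$. Bounding that norm by the row $\ell_2$ norms gives $\lambda=O(\sqrt{k-1}\,\rho_k^2)$, so all rows of $\mat{X},\mat{Y}$ have squared norm $\gamma=O(\sqrt{k-1}\,\rho_k^2)$.

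Now apply Corollary~\ref{cor:inner} to the $2N$ rows of $\mat{X}$ and $\mat{Y}$ together; the $(2N)^2+1=4N^2+1$ points explain the logarithm in the statement. This produces a matrix $\widetilde{\mat{E}}'$ of rank at most $2m$ with $m=\lceil4\ln(4N^2+1)/(\eta^2/2-\eta^3/3)\rceil$. Its entries differ from those of $\mat{E}'$ by at most $\eta\gamma\le\delta/5$ once the constant $68$ is matched. Off the bunch pairs this error is also at most $\tfrac\delta5\mat{D}_{uv}$, which is exactly what the slack absorbs. Set $\mat{A}=\mathcal{K}(-\tfrac12(\tfrac15\mat{S}+\mat{N}\widetilde{\mat{E}}'\mat{N}))$. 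Then $\mat{A}$ has zero diagonal and agrees off the diagonal with the approximant. Its rank is at most $4|L|+2+2m$ plus the two extra units allowed by $\mathcal{K}$, and the bound on $\pdim_\varepsilon(d)$ follows from Proposition~\ref{prop:rank}.

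\emph{Main obstacle.} The hardest step is the entry bound for $\mat{E}'$. Dividing by $\nu(u)\nu(v)$ is dangerous when $v\in B(u)$ but $\nu(v)\ll\nu(u)$. I would first try the asymmetric rescaling $\nu(u)^{-1}\mat{E}_{uv}\nu(v)^{-1}$, using $\nu(v)\ge$ something comparable along bunch pairs. If that fails, I would split $\mat{E}$ into the parts indexed by $v\in B(u)$ and by $u\in B(v)$, rescale each by the relevant single $\nu$, and factor each as an asymmetric product $\mat{X}\mat{Y}^{\mathsf T}$ with balanced row norms. The inner products $\langle\vect{x}_u,\vect{y}_v\rangle$ are still controlled by Corollary~\ref{cor:inner}, and the tolerance on each entry is then measured against $\nu(u)^2\le\mat{D}_{uv}$.
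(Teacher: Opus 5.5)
Your landmark step and the normalization by $\tfrac{1}{5}$ are fine. In fact $\tfrac{1}{5}$ is exactly the paper's $\sigma=(1-\varepsilon)/(1-\delta)$, and a random sample of exactly $\lceil(N/k)\ln N\rceil$ points already works, because $(1-k/N)^{\lceil(N/k)\ln N\rceil}<1/N$ strictly. The main line of the correction step fails, however, and not only because of the weights $\nu(u)\nu(v)$ that you flag.

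The claim that each row of $\mat{E}$ has at most $2(k-1)$ nonzeros is false. Only $|B(u)|\le k-1$ is controlled, and a point can lie in the bunches of arbitrarily many points. Take a centre $u$ at distance $1$ from points $v_1,\dots,v_{N-1}$ that are pairwise at distance $2$, and take $k\ge3$. Then $\rho_k=2$ and $\nu\equiv1$, and with ties broken in the order $v_1,v_2,\dots$ the set $L=\{v_1\}$ meets every $S_w$ (it is also a possible greedy choice). Now $r_{v_i}=2$ and $u\in B(v_i)$ for $i\ge2$. So row $u$ of $\mat{E}$ has $N-2$ entries $1-\tfrac{9}{5}=-\tfrac{4}{5}$, while all pairs $v_iv_j$ and $uv_1$ are far. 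Test positive semidefiniteness of $\mat{E}'+\lambda\mat{I}=\mat{X}\mat{X}^{\mathsf T}$ on the vector with entry $1$ at $u$ and $4/(5\gamma)$ at $v_2,\dots,v_{N-1}$. This gives $\gamma\ge\tfrac{4}{5}\sqrt{N-2}$ for every choice of the diagonal and of $\lambda$, so the requirement $\eta\gamma\le\delta/5$ forces $m$ of order $N\ln N/\delta^2$.

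Independently, the spectral norm is not bounded by the largest row $\ell_2$-norm. The matrix $-a(\mat{J}_s-\mat{I}_s)$ has rows of norm $a\sqrt{s-1}$ but smallest eigenvalue $-a(s-1)$. So even for row-sparse corrections a shift $\lambda\mat{I}$ only guarantees $\gamma$ of order $k\rho_k^2$, which would give a term $k^2\rho_k^4\ln N/\delta^2$ instead of the stated one.

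A second gap also affects your fallback: $\mathcal{K}$ does not discard the diagonal. Since $\mathcal{K}(-\tfrac12\mat{M})_{uv}=\mat{M}_{uv}-\tfrac12(\mat{M}_{uu}+\mat{M}_{vv})$, your $\mat{A}$ agrees with the approximant off the diagonal only if the approximant has zero diagonal. But the diagonal of $\tfrac{1}{5}\mat{S}$ is $\tfrac{4}{5}r_u^2$. With zero diagonal in the correction, the far pair $(u,p(u))$ would get the value $\tfrac{1}{5}r_u^2-\tfrac{2}{5}r_u^2<0$. The correction must carry the diagonal $-\tfrac{4}{5}r_u^2$, and its compression error then enters every entry of row and column $u$. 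Separately, the two extra units from $\mathcal{K}$ are unnecessary: realizing the matrix inside $\mathcal{K}$ as a Gram matrix gives $\pdim_\varepsilon(d)\le4|L|+2+2m$, as stated.

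With the diagonal fixed, your fallback is exactly the paper's proof. The paper writes $\mat{E}=\mat{E}_a+\mat{E}_a^{\mathsf T}$ with row $u$ of $\mat{E}_a$ supported on $B(u)$. Entries with $v\in B(u)$ and $u\in B(v)$, in particular the diagonal, are halved. So row $\vect{y}_u$ has at most $k-1$ nonzero entries, each of modulus at most $17r_u^2$ in the paper's scaling and $\tfrac{17}{5}r_u^2$ in yours. The paper then applies Corollary~\ref{cor:inner} to the $2N$ unit vectors $\vect{y}_u/\|\vect{y}_u\|$ and $\vect{e}_v$. The compressed matrix $\mat{F}$ therefore has error $\eta\|\vect{y}_u\|$ in entry $(u,v)$. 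The normalization does the row-wise rescaling, no weights $\nu$ are needed, and the unbounded number of nonzeros in a column of $\mat{E}_a$ plays no role. Let $\mat{\Delta}=-\tfrac12(\mat{F}+\mat{F}^{\mathsf T})+\tfrac12(\mat{E}_a+\mat{E}_a^{\mathsf T})$. Then entry $(u,v)$ of the $\mathcal{K}$-image changes by at most $|\mat{\Delta}_{uu}|+|\mat{\Delta}_{vv}|+2|\mat{\Delta}_{uv}|\le2\eta(\|\vect{y}_u\|+\|\vect{y}_v\|)$. Using $\nu(u),\nu(v)\le d(u,v)$ and $\eta=\delta/(68\sqrt{k-1}\rho_k^2)$, this becomes $\delta\,d(u,v)^2$ in the paper's scaling, which is exactly your slack $\tfrac{\delta}{5}\mat{D}_{uv}$ in yours.
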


The quantity $\delta$ is not a free parameter. The construction below produces a matrix whose entries lie between $(1-\delta)d(u,v)^2$ and $(9+\delta)d(u,v)^2$, where the factor $9$ comes from the triangle inequality, and the required relative error $\varepsilon$ is reached exactly when $(9+\delta)/(1-\delta)=(1+\varepsilon)/(1-\varepsilon)$. This equation gives $\delta=5\varepsilon-4=5(\varepsilon-\frac45)$, which lies in $(0,1)$ exactly for $4/5<\varepsilon<1$.

\begin{corollary}\label{cor:spread}
Let $d$ be a metric on $N\ge4$ points with all distances in $[1,\Phi]$ and let $4/5<\varepsilon<1$. Then
\[
\pdim_\varepsilon(d)=O\Bigl(\frac{\Phi^2\sqrt N\,\ln N}{\varepsilon-4/5}\Bigr)
\]
with an absolute constant.
\end{corollary}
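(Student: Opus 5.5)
Corollary~\ref{cor:spread} follows directly from Theorem~\ref{thm:upper}: we bound $\rho_k$ by the spread and then choose $k$ to balance the two terms. Since all distances lie in $[1,\Phi]$, we have $\nu(u)\ge1$ and $R_k(u)\le\Phi$ for every $u$ and every $k$, hence $\rho_k\le\Phi$. The asymptotic form of Theorem~\ref{thm:upper} then gives, for every $2\le k\le N$,
\[
\pdim_\varepsilon(d)=O\Bigl(\frac Nk\ln N+\frac{k\,\Phi^4\ln N}{(\varepsilon-4/5)^2}\Bigr).
\]

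Next we pick $k$ to equalize the two terms. Put $t=\Phi^2/(\varepsilon-4/5)$, so the bound reads $O\bigl((N/k+k t^2)\ln N\bigr)$. The choice $k\approx\sqrt N/t$ makes both terms of order $t\sqrt N\ln N$, which is the claimed $\Phi^2\sqrt N\ln N/(\varepsilon-4/5)$. Concretely, take $k=\lceil\sqrt N/t\rceil$. Since $t\ge\Phi^2/(1/5)\ge125$, we have $\sqrt N/t<\sqrt N\le N$, so $k\le N$ once $N\ge1$, and $k\ge1$.

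The only care needed is with the range of $k$. If $\lceil\sqrt N/t\rceil\ge2$, then $k\le2\sqrt N/t$ and $k\ge\sqrt N/t$. Hence $N/k\le t\sqrt N$ and $kt^2\le2t\sqrt N$, and the claim follows. If instead $\sqrt N/t\le1$, we take $k=2$. Then $kt^2=2t^2$, and the term $N/2$ is at most $N\le t\sqrt N$, since $\sqrt N\le t$. The term $2t^2$ could exceed $t\sqrt N$. But in this case $t\sqrt N\ge N\ge N-1\ge\pdim_\varepsilon(d)$ by Proposition~\ref{prop:rank}, so the trivial bound already gives the corollary. The hypothesis $N\ge4$ is inherited from Theorem~\ref{thm:upper}.

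No step is a real obstacle. The one point to watch is that the balancing $k$ must lie in $[2,N]$, and the boundary case is handled by the trivial bound $N-1$. The absolute constant is the one from Theorem~\ref{thm:upper}, enlarged by a factor of at most $3$.
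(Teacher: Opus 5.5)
Your proof is correct and is essentially the paper's argument: you bound $\rho_k\le\Phi$, take $k=\lceil k_0\rceil$ for the balancing value $k_0$, and fall back on $\pdim_\varepsilon(d)\le N-1$ when $k_0\le1$ (the paper's $k_0=\delta\sqrt N/\Phi^2$ with $\delta=5\varepsilon-4$ differs from your $\sqrt N/t$ only by a factor $5$). One small slip: the corollary does not assume $\Phi\ge5$, so $t\ge125$ is unjustified, but $t>5\Phi^2\ge5$ is all you need for $k\le N$.
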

\begin{proof}
Let $K$ be the absolute constant of Theorem~\ref{thm:upper}, so that $\pdim_\varepsilon(d)\le K\bigl((N/k)\ln N+k\rho_k^4\ln N/\delta^2\bigr)$ for $2\le k\le N$, where $\delta=5\varepsilon-4$. Put $X=\Phi^2\sqrt N\ln N/\delta$ and $k_0=\delta\sqrt N/\Phi^2$; the value $k_0$ balances the two terms when $\rho_k$ is replaced by $\Phi$. If $k_0\le1$, then $\sqrt N\le\Phi^2/\delta$, so $N\le\Phi^2\sqrt N/\delta<X$ because $\ln N>1$, and the claim follows from $\pdim_\varepsilon(d)\le N-1$. Otherwise let $k=\lceil k_0\rceil$. Then $2\le k\le2k_0$, and $k\le N$ because $k_0\le\sqrt N$. Since $\rho_k\le\Phi$, we get $(N/k)\ln N\le(N/k_0)\ln N=X$ and $k\rho_k^4\ln N/\delta^2\le2k_0\Phi^4\ln N/\delta^2=2X$, hence $\pdim_\varepsilon(d)\le3KX$. In both cases $\pdim_\varepsilon(d)\le\max(1,3K)\,X$, and $\delta=5(\varepsilon-\frac45)$.
\end{proof}

The choice of $k$ in the proof balances the two terms of Theorem~\ref{thm:upper} (Figure~\ref{fig:tradeoff}). Theorem~\ref{thm:upper} also applies to metrics whose spread is not bounded. For example, let $V$ be a union of clusters with at least $k$ points each, such that inside every cluster the ratio of the largest to the smallest distance is at most $\rho$ and the nearest neighbour of every point lies in its own cluster. The diameters of the clusters and the distances between them are otherwise arbitrary. Then $R_k(u)$ is at most the diameter of the cluster of $u$ and $\nu(u)$ is at least its smallest distance, so $\rho_k\le\rho$.

\begin{figure}[t]
\centering
\input{figures/fig-tradeoff.tex}
\caption{The balance behind Corollary~\ref{cor:spread}. Up to absolute constants, the bound of Theorem~\ref{thm:upper} is $A/k+Bk$ with $A=N\ln N$ and $B=\rho^4\ln N/\delta^2$, where $\rho_k\le\rho$ for all $k$. With $k=x\,k_*$, $k_*=\sqrt{A/B}=\delta\sqrt N/\rho^2$, both terms divided by $\sqrt{AB}$ become $1/x$ and $x$. The first term counts landmarks and decreases with $k$; the second measures the correction and increases. Their sum is minimal at $x=1$, with value $2\sqrt{AB}=2\rho^2\sqrt N\ln N/\delta$. Linear axes; the variable $x$ is normalized so that the point $x=1$ corresponds to the optimal balance.}
\label{fig:tradeoff}
\end{figure}

\begin{proof}[Proof of Theorem~\ref{thm:upper}]
Write $\rho=\rho_k$; since $k\ge2$, we have $\rho\ge1$. As $0<\delta<1$ and $k-1\ge1$, also $0<\eta<1$. The construction is shown in Figure~\ref{fig:landmarks}.

\emph{Step 1: landmarks.} For $u\in V$ let $S_u$ be the set of the $k$ points nearest to $u$, the point $u$ included; ties are broken by a fixed order of $V$. There is a set $L\subseteq V$ with $|L|\le\lceil(N/k)\ln N\rceil$ that meets every $S_u$. Indeed, a sample of $\lceil(N/k)\ln N\rceil$ points chosen independently and uniformly misses a given $S_u$ with probability $(1-k/N)^{\lceil(N/k)\ln N\rceil}<1/N$. So with positive probability it meets all of them. Let $r_u=\min_{w\in L}d(u,w)$, let $p(u)\in L$ be a landmark at distance $r_u$, and let
\[
B(u)=\{v\in V:\ d(u,v)<r_u\}.
\]
If $u\in L$, then $r_u=0$ and $B(u)=\varnothing$; otherwise $u\in B(u)$. Since $S_u$ lies in the ball of radius $R_k(u)$ around $u$ and contains a landmark, $r_u\le R_k(u)\le\rho\,\nu(u)$. Every point of $B(u)$ is closer to $u$ than that landmark, hence belongs to $S_u$ and differs from it, so $|B(u)|\le k-1$.

\emph{Step 2: far pairs.} Let $\mat{T}_{uv}=(r_u+d(p(u),v))^2$. By the triangle inequality $\mat{T}_{uv}\ge d(u,v)^2$ for all $u,v$. If $v\notin B(u)$, then $d(p(u),v)\le r_u+d(u,v)\le2d(u,v)$ and $\mat{T}_{uv}\le9\,d(u,v)^2$. Put $g_w(v)=d(w,v)$. Then $\mat{T}_{uv}=r_u^2+2r_u\,g_{p(u)}(v)+g_{p(u)}(v)^2$, so every row of $\mat{T}$ is a linear combination of $\vect{1}^{\mathsf T}$ and the $2|L|$ row vectors $g_w$, $g_w^2$, $w\in L$. Hence $\rank\mat{T}\le2|L|+1$.

\emph{Step 3: the target matrix.} Call a pair $u\ne v$ \emph{far} if $v\notin B(u)$ and $u\notin B(v)$, and \emph{near} otherwise. Define a symmetric matrix $\mat{A}^*$ with zero diagonal by $\mat{A}^*_{uv}=\frac12(\mat{T}_{uv}+\mat{T}_{vu})$ for far pairs and $\mat{A}^*_{uv}=d(u,v)^2$ for near pairs. Then
\begin{equation}\label{eq:Astar}
d(u,v)^2\le\mat{A}^*_{uv}\le9\,d(u,v)^2\qquad(u\ne v).
\end{equation}

\emph{Step 4: the correction.} Let $\mat{E}=\mat{A}^*-\frac12(\mat{T}+\mat{T}^{\mathsf T})$. It vanishes on far pairs. Let $v\in B(u)$, $v\ne u$, so that $t:=d(u,v)<r_u$. Then $\mat{T}_{uv}\le(2r_u+t)^2\le9r_u^2$; also $r_v\le d(v,p(u))\le t+r_u<2r_u$ and $\mat{T}_{vu}\le(2r_v+t)^2\le25r_u^2$. As $0\le\mat{A}^*_{uv}\le\frac12(\mat{T}_{uv}+\mat{T}_{vu})$, we get $|\mat{E}_{uv}|\le17r_u^2$; by symmetry, an entry with $u\in B(v)$ has modulus at most $17r_v^2$. On the diagonal $\mat{E}_{uu}=-\mat{T}_{uu}=-4r_u^2$. Split $\mat{E}=\mat{E}_a+\mat{E}_a^{\mathsf T}$, where row $u$ of $\mat{E}_a$ is supported on $B(u)$: an entry with $v\in B(u)$, $u\notin B(v)$ is assigned to row $u$, and an entry with $v\in B(u)$ and $u\in B(v)$, in particular a diagonal entry, is halved. Let $\vect{y}_u\in\R^N$ be row $u$ of $\mat{E}_a$. Its support lies in $B(u)$, so it has at most $k-1$ non-zero entries. Each of them has modulus at most $17r_u^2$: off the diagonal by the bound above, and on the diagonal because $\frac12|\mat{E}_{uu}|=2r_u^2$. Hence
\begin{equation}\label{eq:rownorm}
\|\vect{y}_u\|\le17\sqrt{k-1}\,r_u^2\le17\sqrt{k-1}\,\rho^2\,\nu(u)^2 .
\end{equation}

\emph{Step 5: projection.} Let $\vect{e}_v$ be the standard basis of $\R^N$, so that $\langle\vect{y}_u,\vect{e}_v\rangle=(\mat{E}_a)_{uv}$. Let $k'=\lceil4\ln(4N^2+1)/(\eta^2/2-\eta^3/3)\rceil$. Apply Corollary~\ref{cor:inner}, with $2N$ in place of $N$ and with $\gamma=1$, to the at most $2N$ unit vectors $\vect{y}_u/\|\vect{y}_u\|$ (for $\vect{y}_u\ne\vect{0}$) and $\vect{e}_v$: there is a linear map $\varphi:\R^N\to\R^{k'}$ with
\[
\bigl|\langle\varphi\vect{y}_u,\varphi\vect{e}_v\rangle-(\mat{E}_a)_{uv}\bigr|\le\eta\,\|\vect{y}_u\|\qquad\text{for all }u,v .
\]
Let $\mat{F}$ be the matrix with entries $\mat{F}_{uv}=\langle\varphi\vect{y}_u,\varphi\vect{e}_v\rangle$; its rank is at most $k'$.

\emph{Step 6: assembly.} Let $\mat{G}_1=-\frac14(\mat{T}+\mat{T}^{\mathsf T})$, $\widetilde{\mat{G}}=\mat{G}_1-\frac12(\mat{F}+\mat{F}^{\mathsf T})$ and $\widetilde{\mat{A}}=\mathcal{K}(\widetilde{\mat{G}})$. The matrix $\widetilde{\mat{A}}$ is symmetric with zero diagonal, and $\rank\widetilde{\mat{G}}\le4|L|+2+2k'$. The matrix $\mat{G}=\mat{G}_1-\frac12(\mat{E}_a+\mat{E}_a^{\mathsf T})$ equals $-\frac12\mat{A}^*$, so $\mathcal{K}(\mat{G})=\mat{A}^*$. The entry $(u,v)$ of $\mat{\Delta}=\widetilde{\mat{G}}-\mat{G}$ has modulus at most $\frac12\eta(\|\vect{y}_u\|+\|\vect{y}_v\|)$, in particular $|\mat{\Delta}_{uu}|\le\eta\|\vect{y}_u\|$. Therefore, for $u\ne v$,
\[
|\widetilde{\mat{A}}_{uv}-\mat{A}^*_{uv}|\le|\mat{\Delta}_{uu}|+|\mat{\Delta}_{vv}|+2|\mat{\Delta}_{uv}|\le2\eta\bigl(\|\vect{y}_u\|+\|\vect{y}_v\|\bigr).
\]
By~\eqref{eq:rownorm} and $\nu(u),\nu(v)\le d(u,v)$ the right-hand side is at most
\[
2\eta\cdot2\cdot17\sqrt{k-1}\,\rho^2\,d(u,v)^2=68\,\eta\sqrt{k-1}\,\rho^2\,d(u,v)^2=\delta\,d(u,v)^2 .
\]
With~\eqref{eq:Astar} this gives $(1-\delta)\,d(u,v)^2\le\widetilde{\mat{A}}_{uv}\le(9+\delta)\,d(u,v)^2$.

\emph{Step 7: normalization.} By the choice of $\delta$ we have $(9+\delta)/(1-\delta)=(1+\varepsilon)/(1-\varepsilon)$. Hence $\sigma\widetilde{\mat{A}}$ with $\sigma=(1-\varepsilon)/(1-\delta)$ lies in $\mathcal{A}_\varepsilon(d)$. Since $\sigma\widetilde{\mat{A}}=\mathcal{K}(\sigma\widetilde{\mat{G}})$, it is the matrix of squared intervals of points in $\R^{p,q}$ with $p+q=\rank\widetilde{\mat{G}}$.

\emph{Step 8: count.} We have $\pdim_\varepsilon(d)\le\rank\widetilde{\mat{G}}\le4|L|+2+2k'$, which is the stated bound. For the asymptotic form, $\eta^2/2-\eta^3/3\ge\eta^2/6$ and $4N^2+1\le N^4$ give $k'\le24\ln(N^4)/\eta^2+1=96\ln N/\eta^2+1$, while $1/\eta^2=68^2(k-1)\rho^4/\delta^2$ and $\delta^2=25(\varepsilon-\frac45)^2$.
\end{proof}

\begin{remark}
The landmarks can also be chosen greedily: at each step, add a point that lies in the largest number of sets $S_u$ not yet met. Each $S_u$ has $k$ elements, so if $U$ sets are not yet met, some point lies in at least $kU/N$ of them. After $t$ steps at most $N(1-k/N)^t$ sets remain, and this number is less than $1$ for $t=\lceil(N/k)\ln N\rceil$. Hence the greedy choice gives the same bound $|L|\le\lceil(N/k)\ln N\rceil$ as Step~1. If the projection is chosen at random, the construction becomes a randomized polynomial-time algorithm. All steps except the projection are exact. For every metric, the symmetric matrix $\frac12(\mat{T}+\mat{T}^{\mathsf T})$ of the proof has rank at most $4|L|+2$ and lies between $d^2$ and $9d^2$ on all pairs except at most $N(k-1)$ of them.
\end{remark}

\begin{remark}\label{rem:barrier}
The ratio $\rho_k$ enters through the size of the correction: by~\eqref{eq:rownorm} the row of $u$ has norm of order $\sqrt k\,r_u^2$, the projection produces an error proportional to this norm, and the admissible error on a pair is of order $d(u,v)^2\ge\nu(u)^2$. If the bunch $B(u)$ contains points at distances much smaller than $r_u$, the argument loses the factor $\rho_k^4$. We do not know whether this factor is necessary.
\end{remark}

\section{Lower bounds}\label{sec:lower}

We first treat configurations with null intervals, where the answer is immediate. Then we give the counting lower bounds (Section~\ref{sec:girth}) and the explicit metrics (Section~\ref{sec:explicit}).

\subsection{Null intervals}\label{sec:null}

If $\mat{D}_{uv}=0$, the condition $|\mat{A}_{uv}-\mat{D}_{uv}|\le\varepsilon|\mat{D}_{uv}|$ forces $\mat{A}_{uv}=0$, and if $\mat{D}_{uv}\ne0$ and $\varepsilon<1$, then $\mat{A}_{uv}$ has the sign of $\mat{D}_{uv}$. So a relative error preserves null squared intervals exactly. The next theorem shows that in the presence of null intervals the error may not reduce the dimension at all.

\begin{theorem}\label{thm:null}
Let $N\ge2$, $k=\lfloor N/2\rfloor$, and let $\vect{e}_1,\dots,\vect{e}_k$ be the standard basis of $\R^k$. In $\R^{k,k}=\R^k\oplus\R^k$ consider the points
\[
\vect{u}_i=\tfrac12(\vect{e}_i,\vect{e}_i),\qquad \vect{v}_i=\tfrac12(-\vect{e}_i,\vect{e}_i),\qquad i=1,\dots,k,
\]
and for odd $N$ add the origin $\vect{o}$. Let $\mat{D}_N$ be the matrix of squared intervals of these $N$ points. Then for every $0\le\varepsilon<1$
\[
\pdim_\varepsilon(\mat{D}_N)=N-1=\max_{\mat{D}}\pdim_\varepsilon(\mat{D}),
\]
where the maximum is over all symmetric matrices $\mat{D}$ of order $N$ with zero diagonal.
\end{theorem}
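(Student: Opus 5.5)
The plan is to compute the matrix $\mat{D}_N$ explicitly, describe the whole set $\mathcal{A}_\varepsilon(\mat{D}_N)$, and then use the identity $\pdim_\varepsilon(\mat{D})=\min_{\mat{A}\in\mathcal{A}_\varepsilon(\mat{D})}\rank(\mat{P}\mat{A}\mat{P})$ from Proposition~\ref{prop:rank}. That identity holds for arbitrary symmetric $\mat{D}$ with zero diagonal, as noted after its proof.

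\emph{The upper bound.} For every $\mat{D}$, take $\mat{A}=\mat{D}\in\mathcal{A}_\varepsilon(\mat{D})$. Then $\rank(\mat{P}\mat{A}\mat{P})\le\rank\mat{P}=N-1$. Hence the maximum over all $\mat{D}$ is at most $N-1$, and it suffices to show $\pdim_\varepsilon(\mat{D}_N)\ge N-1$.

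\emph{The matrix $\mat{D}_N$.} A direct computation of $\mathcal{Q}$ on the differences gives the following.
\begin{itemize}
\item For $i\ne j$, $\vect{u}_i-\vect{u}_j=\tfrac12(\vect{e}_i-\vect{e}_j,\vect{e}_i-\vect{e}_j)$, so its square is $\tfrac14\cdot2-\tfrac14\cdot2=0$. The same holds for $\vect{v}_i-\vect{v}_j$ and for $\vect{u}_i-\vect{v}_j=\tfrac12(\vect{e}_i+\vect{e}_j,\vect{e}_i-\vect{e}_j)$.
\item Each $\vect{u}_i$ and $\vect{v}_i$ is null, so all intervals to $\vect{o}$ vanish.
\item Only $\vect{u}_i-\vect{v}_i=(\vect{e}_i,\vect{0})$ has $\mathcal{Q}=1$.
\end{itemize}
So $\mat{D}_N$ is the adjacency matrix of a perfect matching on $2k$ points, with an isolated point when $N$ is odd. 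Since a relative error with $\varepsilon<1$ preserves zeros and signs, every $\mat{A}\in\mathcal{A}_\varepsilon(\mat{D}_N)$ has the same form. It is block diagonal with blocks $\begin{pmatrix}0&a_i\\a_i&0\end{pmatrix}$, where $a_i\in[1-\varepsilon,1+\varepsilon]$, so $a_i>0$, and it has a zero row and column at $\vect{o}$ for odd $N$.

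\emph{The lower bound.} We show $\ker(\mat{P}\mat{A}\mat{P})=\operatorname{span}(\vect{1})$ for every such $\mat{A}$. Suppose $\mat{P}\mat{A}\mat{P}\vect{x}=\vect{0}$. Then $\mat{A}\mat{P}\vect{x}=c\vect{1}$ for some scalar $c$.
\begin{itemize}
\item \emph{Even $N$.} Here $\mat{A}$ is invertible, with $\mat{A}^{-1}\vect{1}$ having positive entries $1/a_i$. So $\mat{P}\vect{x}=c\mat{A}^{-1}\vect{1}$. Taking the inner product with $\vect{1}$ gives $0=c\,\vect{1}^{\mathsf T}\mat{A}^{-1}\vect{1}$, whence $c=0$ and $\mat{P}\vect{x}=\vect{0}$.
\item \emph{Odd $N$.} The entry of $\mat{A}\mat{P}\vect{x}$ at $\vect{o}$ is $0$, so $c=0$. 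Then $\mat{P}\vect{x}\in\ker\mat{A}=\operatorname{span}(\vect{e}_{\vect{o}})$. Since $\mat{P}\vect{x}\perp\vect{1}$ while $\vect{e}_{\vect{o}}$ is not orthogonal to $\vect{1}$, again $\mat{P}\vect{x}=\vect{0}$.
\end{itemize}
In both cases $\vect{x}\in\operatorname{span}(\vect{1})$. Thus $\rank(\mat{P}\mat{A}\mat{P})=N-1$ for every admissible $\mat{A}$, and the minimum is $N-1$.

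The only delicate point is this kernel argument: the positivity of the $a_i$, guaranteed by $\varepsilon<1$, rules out the possibility that centring lowers the rank by two. Everything else is bookkeeping.
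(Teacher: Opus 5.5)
Your proof is correct and is essentially the paper's argument. The paper phrases the lower bound as affine independence of the representing points: an affine relation $\sum_a c_a=0$, $\sum_a c_a\vect{y}_a=\vect{0}$ gives $\sum_b c_b\mat{A}_{ab}=S$ for every $a$, which is exactly your equation $\mat{A}\mat{P}\vect{x}=c\vect{1}$ with $\mat{P}\vect{x}\perp\vect{1}$. It then forces $S=0$ using the zero row at $\vect{o}$ for odd $N$ and the positivity of the $\lambda_i$ for even $N$, just as you do, so working with $\rank(\mat{P}\mat{A}\mat{P})$ via Proposition~\ref{prop:rank} instead of with points is only a change of language.
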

\begin{proof}
By Proposition~\ref{prop:rank}, every $\mat{D}$ satisfies $\pdim_\varepsilon(\mat{D})\le\rank(\mat{P}\mat{D}\mat{P})\le N-1$, since $\mat{D}\in\mathcal{A}_\varepsilon(\mat{D})$ and $\mat{P}\vect{1}=\vect{0}$. It remains to show that $\pdim_\varepsilon(\mat{D}_N)\ge N-1$.

For $i\ne j$ the differences $\vect{u}_i-\vect{u}_j=\frac12(\vect{e}_i-\vect{e}_j,\vect{e}_i-\vect{e}_j)$, $\vect{v}_i-\vect{v}_j=\frac12(\vect{e}_j-\vect{e}_i,\vect{e}_i-\vect{e}_j)$ and $\vect{u}_i-\vect{v}_j=\frac12(\vect{e}_i+\vect{e}_j,\vect{e}_i-\vect{e}_j)$ have $\mathcal{Q}=\frac14(2-2)=0$. Moreover, $\mathcal{Q}(\vect{u}_i-\vect{v}_i)=\mathcal{Q}(\vect{e}_i,\vect{0})=1$ and $\mathcal{Q}(\vect{u}_i)=\mathcal{Q}(\vect{v}_i)=\frac14-\frac14=0$. Thus all squared intervals between distinct points vanish, except $\mathcal{Q}(\vect{u}_i-\vect{v}_i)=1$.

Let points $\vect{y}_a\in\R^{p,q}$, indexed by the points $a$ of the configuration, have a matrix of squared intervals $\mat{A}\in\mathcal{A}_\varepsilon(\mat{D}_N)$. Then $\mat{A}_{ab}=0$ whenever $(\mat{D}_N)_{ab}=0$, and $\lambda_i=\mat{A}_{\vect{u}_i\vect{v}_i}\in[1-\varepsilon,1+\varepsilon]$, so $\lambda_i>0$. We show that the points $\vect{y}_a$ are affinely independent, which gives $p+q\ge N-1$. Let $\sum_ac_a=0$ and $\sum_ac_a\vect{y}_a=\vect{0}$. Let $B$ be the symmetric bilinear form with $B(\vect{x},\vect{x})=\mathcal{Q}(\vect{x})$, and put $S=\sum_bc_b\mathcal{Q}(\vect{y}_b)$. For every $a$,
\[
\sum_bc_b\mat{A}_{ab}=\sum_bc_b\bigl(\mathcal{Q}(\vect{y}_a)+\mathcal{Q}(\vect{y}_b)-2B(\vect{y}_a,\vect{y}_b)\bigr)=S .
\]
The only nonzero entry in the row $\vect{u}_i$ of $\mat{A}$ is $\lambda_i$ in the column $\vect{v}_i$, so $\lambda_ic_{\vect{v}_i}=S$, and likewise $\lambda_ic_{\vect{u}_i}=S$. For odd $N$ the row $\vect{o}$ is zero, hence $S=0$; then $c_{\vect{u}_i}=c_{\vect{v}_i}=0$, and $c_{\vect{o}}=0$ follows from $\sum_ac_a=0$. For even $N$ we get $0=\sum_ac_a=2S\sum_i\lambda_i^{-1}$, hence $S=0$, and again all $c_a$ vanish.
\end{proof}

\begin{remark}\label{rem:null}
For $\varepsilon=0$ the equality $\pdim_0(\mat{D})=\rank(\mat{P}\mat{D}\mat{P})$ is the known formula for the exact dimension~\cite{Gower1985}; see also~\cite[Theorem~3.1]{NSS2023}. Theorem~\ref{thm:null} only adds that for $\mat{D}_N$ a relative error of any size $\varepsilon<1$ does not lower the dimension: the proof uses nothing but the exact preservation of zeros and the positivity of the $\lambda_i$. The lower bound holds for every signature of the target space. Metrics have no null squared distances, and for them the picture is different (Theorem~\ref{thm:main}).
\end{remark}

\subsection{Subgraphs of high-girth graphs}\label{sec:girth}

The girth of a graph is the length of its shortest cycle, and $+\infty$ for a forest. Let $G=(V,E)$ be a graph with girth greater than an integer $\ell\ge2$. Every simple graph qualifies for $\ell=2$. For a spanning subgraph $H\subseteq G$ define
\[
d_H(u,v)=\min\{\ell,\operatorname{dist}_H(u,v)\},
\]
where $\operatorname{dist}_H$ is the path metric of $H$, infinite between different components. This family of metrics is the one used in~\cite[Proposition~15.3.3]{MatousekBook}.

\begin{lemma}\label{lem:edges}
The function $d_H$ is a metric with values in $\{1,\dots,\ell\}$ on pairs of distinct vertices. For every edge $uv\in E$ we have $d_H(u,v)=1$ if $uv\in E(H)$ and $d_H(u,v)=\ell$ otherwise.
\end{lemma}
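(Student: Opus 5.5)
The plan is to verify the three metric axioms for $d_H$ directly and then read off the values on edges of $G$ from the girth condition.

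\emph{Metric and range.} For distinct $u,v$ we have $\operatorname{dist}_H(u,v)\ge1$, possibly $+\infty$. Hence $d_H(u,v)=\min\{\ell,\operatorname{dist}_H(u,v)\}$ is a positive integer in $\{1,\dots,\ell\}$, and $d_H(u,u)=0$. Symmetry is inherited from $\operatorname{dist}_H$.

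For the triangle inequality $d_H(u,w)\le d_H(u,v)+d_H(v,w)$, I would split into two cases.
\begin{itemize}
\item If either term on the right equals $\ell$, the inequality holds, since the left side is at most $\ell$.
\item Otherwise both terms are genuine path distances in $H$. Then $d_H(u,w)\le\operatorname{dist}_H(u,w)\le\operatorname{dist}_H(u,v)+\operatorname{dist}_H(v,w)$, by the triangle inequality for the (extended) path metric.
\end{itemize}
Truncation of a metric at a constant is thus a metric.

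\emph{Values on edges.} Let $uv\in E$. If $uv\in E(H)$, then $\operatorname{dist}_H(u,v)=1$, so $d_H(u,v)=1$.

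If $uv\notin E(H)$, suppose $\operatorname{dist}_H(u,v)=j<\ell$, realized by a shortest path in $H$. This path is simple and does not use the edge $uv$, and since $u\ne v$ and $uv\notin E(H)$ we have $j\ge2$. Adding the edge $uv\in E$ closes it into a cycle of length $j+1\le\ell$ in $G$, because $H\subseteq G$. This contradicts the assumption that the girth of $G$ is greater than $\ell$. Hence $\operatorname{dist}_H(u,v)\ge\ell$ (possibly infinite), and $d_H(u,v)=\ell$.

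The only point needing care is that the closed walk is a genuine cycle: the path is simple, has length at least $2$, and avoids the edge $uv$. Otherwise the argument is routine.
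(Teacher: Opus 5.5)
Your proof is correct and follows the same route as the paper. You use the triangle inequality for the truncation of the (extended) path metric of $H$, and you note that a shortest $H$-path between the endpoints of an edge $uv\in E\setminus E(H)$ closes with $uv$ into a cycle of $G$, so its length must be at least $\ell$; you just spell out details the paper leaves implicit, namely the case split for the truncation and why the closed walk is a genuine cycle.
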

\begin{proof}
The minimum of an extended metric and a positive constant satisfies the triangle inequality. If $uv\in E\setminus E(H)$ and $H$ contains a path from $u$ to $v$ of length $L$, then this path together with the edge $uv$ is a cycle of length $L+1$ in $G$, so $L+1\ge\ell+1$.
\end{proof}

The condition on the girth cannot be weakened: in a cycle of length $\ell$ the removal of an edge leaves its endpoints at distance $\ell-1$.

Put $\varepsilon_\ell=(\ell^2-1)/(\ell^2+1)$. The intervals $[1-\varepsilon,1+\varepsilon]$ and $[\ell^2(1-\varepsilon),\ell^2(1+\varepsilon)]$ are disjoint exactly when $\varepsilon<\varepsilon_\ell$, that is, when the distortion $\sqrt{(1+\varepsilon)/(1-\varepsilon)}$ is less than $\ell$. For $\ell=2,3,5,7$ the thresholds are $3/5$, $4/5$, $12/13$, $24/25$.

\begin{theorem}\label{thm:girth}
Let $G$ be a graph with $N\ge2$ vertices, $m$ edges and girth greater than $\ell\ge2$, let $0\le\varepsilon<\varepsilon_\ell$, and let $s\ge1$ be an integer with $8Ns\le m$. Then at most $2^{\alpha m}$ of the $2^m$ subgraphs $H\subseteq G$ satisfy $\pdim_\varepsilon(d_H)\le s$, where
\[
\alpha=\frac18\log_2(64e)+\frac1{16}<1 .
\] In particular, there is $H\subseteq G$ with
\[
\pdim_\varepsilon(d_H)\ \ge\ \Bigl\lfloor\frac{m}{8N}\Bigr\rfloor+1 .
\]
\end{theorem}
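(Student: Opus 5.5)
The plan is to count subgraphs through sign patterns of quadratic polynomials, following Matoušek's scheme but replacing the net-rounding step by Lemma~\ref{lem:warren}. Fix $s\ge1$ with $8Ns\le m$, and suppose $H\subseteq G$ satisfies $\pdim_\varepsilon(d_H)\le s$.

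\emph{Step 1: one space for all representations.} There are points $\vect{x}_u\in\R^{p,q}$ with $p+q\le s$ forming an $\varepsilon$-representation of $d_H$. Appending zero coordinates to the positive or negative part, we may assume the points lie in $\R^{p,s-p}$ for some $p\in\{0,\dots,s\}$. Appending zeros does not change any squared interval.

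\emph{Step 2: the polynomials.} For each of the $s+1$ signatures, take the $n=Ns$ coordinates of $\vect{x}_u$, $u\in V$, as variables. For every edge $uv\in E$ set
\[
P_{uv}=\mathcal{Q}(\vect{x}_u-\vect{x}_v)-c,\qquad c=\tfrac12\bigl((1+\varepsilon)+\ell^2(1-\varepsilon)\bigr).
\]
This gives $m$ polynomials of degree $2$. By Lemma~\ref{lem:edges}, $d_H(u,v)^2$ is $1$ if $uv\in E(H)$ and $\ell^2$ otherwise. Since $\varepsilon<\varepsilon_\ell$, the intervals $[1-\varepsilon,1+\varepsilon]$ and $[\ell^2(1-\varepsilon),\ell^2(1+\varepsilon)]$ are disjoint, and $c$ lies strictly between them. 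Hence at the point $\vect{z}$ given by the representation:
\begin{itemize}
\item no $P_{uv}$ vanishes;
\item $\sign P_{uv}(\vect{z})=-1$ exactly when $uv\in E(H)$.
\end{itemize}
So $H$ is determined by its signature $p$ together with a sign pattern realized at a point where no polynomial vanishes.

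\emph{Step 3: the count.} The hypothesis $8Ns\le m$ gives $n\le m$, so Lemma~\ref{lem:warren} applies. The number of admissible $H$ is therefore at most
\[
(s+1)\Bigl(\frac{8em}{Ns}\Bigr)^{Ns}.
\]
Write $x=Ns/m\le1/8$. The function $x\ln(8e/x)$ has derivative $\ln(8e/x)-1>0$ on $(0,1/8]$, so it is increasing there. Hence the second factor is at most $(64e)^{m/8}=2^{\frac18\log_2(64e)\,m}$. For the first factor, $s+1\le2^s$ and $N\ge2$ give
\[
s+1\le 2^{m/(8N)}\le 2^{m/16}.
\]
Multiplying, the count is at most $2^{\alpha m}$. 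Numerically $\log_2(64e)\approx7.443$, so $\alpha\approx0.930+0.0625<1$.

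\emph{Step 4: the consequence.} Put $s=\lfloor m/(8N)\rfloor$. If $s\ge1$, then $8Ns\le m$, and at most $2^{\alpha m}<2^m$ subgraphs satisfy $\pdim_\varepsilon(d_H)\le s$. So some $H$ has $\pdim_\varepsilon(d_H)\ge s+1$. If $s=0$, the claim is $\pdim_\varepsilon(d_H)\ge1$. This holds for every $H$: since $N\ge2$, the representation needs two points with a positive squared interval, which is impossible in dimension $0$.

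The only delicate points are checking that no polynomial vanishes at the realizing point, which the choice of $c$ strictly between the two intervals guarantees, and the monotonicity estimate in Step~3. I expect neither to be a serious obstacle.
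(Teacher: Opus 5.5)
Your proposal is correct and follows the paper's proof essentially step for step. It uses the same threshold $\frac12\bigl(1+\varepsilon+\ell^2(1-\varepsilon)\bigr)$, the same reduction by zero-padding to the $s+1$ signatures $\R^{p,s-p}$, the same application of Lemma~\ref{lem:warren} with $n=Ns\le m$, and the same monotonicity estimate (your $x=Ns/m$ is just the reciprocal of the paper's variable). The differences are cosmetic: you check $\alpha<1$ numerically rather than through the exact inequality $e<2\sqrt2$, and you spell out the case $\lfloor m/(8N)\rfloor=0$ that the paper calls trivial.
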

\begin{proof}
Put $T=\frac12\bigl(1+\varepsilon+\ell^2(1-\varepsilon)\bigr)$, so that $1+\varepsilon<T<\ell^2(1-\varepsilon)$. A representation in $\R^{p,q}$ with $p+q\le s$ becomes one in $\R^{p,s-p}$ after appending zero coordinates, so it suffices to consider the $s+1$ signatures with $p+q=s$.

Fix such a signature. The coordinates of $N$ points are $Ns$ real variables, and for each edge $uv\in E$ the function $P_{uv}=\mathcal{Q}(\vect{x}_u-\vect{x}_v)-T$ is a polynomial of degree $2$ in them. If the points form an $\varepsilon$-representation of $d_H$, then by Lemma~\ref{lem:edges} we have $P_{uv}<0$ for $uv\in E(H)$ and $P_{uv}>0$ for $uv\in E\setminus E(H)$. No $P_{uv}$ vanishes, and the sign vector determines $H$. Since $m\ge Ns$, Lemma~\ref{lem:warren} bounds the number of such $H$ for one signature by $(8em/(Ns))^{Ns}$.

Put $x=m/(Ns)\ge8$. The total number of subgraphs with $\pdim_\varepsilon(d_H)\le s$ has binary logarithm at most
\[
\log_2(s+1)+m\,\frac{\log_2(8ex)}{x}.
\]
The function $x\mapsto\log_2(8ex)/x$ is decreasing for $x\ge1$, so for $x\ge8$ it is at most $\log_2(64e)/8$. Moreover, $\log_2(s+1)\le s\le m/(8N)\le m/16$, so the sum is at most $\alpha m$. The inequality $\alpha<1$ is equivalent to $\log_2(64e)<15/2$, that is, to $e<2\sqrt2$, which holds because $(2\sqrt2)^2=8>e^2$. The last assertion follows with $s=\lfloor m/(8N)\rfloor$ when this number is at least $1$, and is trivial otherwise.
\end{proof}

The coefficient $1/8$ cannot be replaced by $1/7$ in this argument: at $x=7$ the binary logarithm $\log_2(56e)$ exceeds $7$, because $56e>128$.

\begin{corollary}\label{cor:lower}
\leavevmode
\begin{enumerate}
\item For every $N\ge2$ and $0\le\varepsilon<3/5$ there is a metric on $N$ points with values in $\{1,2\}$ and $\pdim_\varepsilon>(N-1)/16$.
\item For every $N\ge2$ and $0\le\varepsilon<4/5$ there is a metric on $N$ points with values in $\{1,2,3\}$ and $\pdim_\varepsilon>(N-1)/32$.
\item For every $N\ge2$ and $0\le\varepsilon<12/13$ there is a metric on $N$ points with values in $\{1,\dots,5\}$ and $\pdim_\varepsilon>\sqrt{N/8}/64$.
\item For every prime power $q$ and $0\le\varepsilon<24/25$ there is a metric on $N=2(q+1)(q^2+1)$ points with values in $\{1,\dots,7\}$ and $\pdim_\varepsilon>(q+1)/16$, which is of order $N^{1/3}$.
\item For every integer $\ell\ge3$, every $N\ge2$ and $0\le\varepsilon<\varepsilon_\ell$ there is a metric on $N$ points with values in $\{1,\dots,\ell\}$ and $\pdim_\varepsilon>N^{1/(\ell-1)}/72$.
\end{enumerate}
\end{corollary}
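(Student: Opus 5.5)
Each item follows from Theorem~\ref{thm:girth} applied to a suitable graph $G$ of girth greater than $\ell$ on at most $N$ vertices. We choose $\ell=2,3,5,7,\ell$ respectively, so that the threshold $\varepsilon_\ell$ equals $3/5$, $4/5$, $12/13$, $24/25$, $\varepsilon_\ell$. The resulting metric $d_H$ has values in $\{1,\dots,\ell\}$ and satisfies $\pdim_\varepsilon(d_H)\ge\lfloor m/(8N')\rfloor+1>m/(8N')$, where $N'$ is the number of vertices of $G$. If $G$ has fewer than $N$ vertices, we pad it with isolated vertices. This keeps the girth, and the padded vertices lie at distance $\ell$ from everything, so the bound becomes $m/(8N)$ with the true $N$. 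The padding is harmless because $\lfloor m/(8N)\rfloor+1>m/(8N)$ still holds. So the work reduces to exhibiting dense graphs of large girth.

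\emph{Items 1 and 2.} For $\ell=2$ take $G=K_N$, with $m=N(N-1)/2$, which gives $m/(8N)=(N-1)/16$. For $\ell=3$ we need a triangle-free graph. Take the complete bipartite graph $K_{\lfloor N/2\rfloor,\lceil N/2\rceil}$, with $m=\lfloor N^2/4\rfloor\ge (N^2-1)/4$. Then $m/(8N)\ge(N-1/N)/32\ge(N-1)/32$.

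\emph{Item 3.} For $\ell=5$ we need girth at least $6$, so $C_4$-free is not enough. We use the incidence graph of a projective plane of order $q$. It has $2(q^2+q+1)$ vertices, is $(q+1)$-regular, and has girth $6$. We choose a prime $q$ with $2(q^2+q+1)\le N$ as large as possible, using Bertrand's postulate. This gives $q\ge c\sqrt N$, and the bound $m/(8N)=(q+1)(q^2+q+1)/(8N)$ with $N$ the padded size is of order $\sqrt N$. Tracking the constants is the main obstacle. The claimed $\sqrt{N/8}/64$ leaves generous room, but small $N$ must be handled separately. When no such prime exists, or the resulting bound is below $1$, the claim is trivial, since $\pdim\ge1$ and $\sqrt{N/8}/64<1$ for $N<32768$. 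For large $N$, Bertrand gives $q>\sqrt{N/2}/2-1$ roughly, and then $(q+1)^3/(8N)$ easily exceeds $\sqrt{N/8}/64$.

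\emph{Items 4 and 5.} For $\ell=7$ we use the incidence graph of the generalized quadrangle $W(q)$. It has $N=2(q+1)(q^2+1)$ vertices, is $(q+1)$-regular, and has girth $8$. Hence $m=N(q+1)/2$ and $m/(8N)=(q+1)/16$. For general $\ell$ we use the standard existence of graphs with girth greater than $\ell$ and about $\tfrac12 N^{1+1/(\ell-1)}$ edges. One way to get them is the greedy or deletion argument: a maximal graph of girth greater than $\ell$ has minimum degree roughly $N^{1/(\ell-1)}$ (the Moore-bound argument in reverse). This yields $m\ge \tfrac{1}{9}N^{1+1/(\ell-1)}$ after constant bookkeeping, hence $m/(8N)\ge N^{1/(\ell-1)}/72$. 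Again the constant is the delicate part. Small $N$ are trivial since the bound is below $1$.
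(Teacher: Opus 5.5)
Your plan matches the paper's: Theorem~\ref{thm:girth} is applied to $K_N$, to $K_{\lfloor N/2\rfloor,\lceil N/2\rceil}$, to the incidence graphs of a projective plane and of $W(q)$, and to a dense graph of girth $>\ell$, padding with isolated vertices. Items (1), (2) and (4) are fine; for (4) you assert the parameters and girth of $W(q)$, which the paper checks directly. Two steps do not work as written.

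\emph{Item (5).} Your claim that a maximal graph of girth greater than $\ell$ has minimum degree about $N^{1/(\ell-1)}$ is false. The star $K_{1,N-1}$ is maximal for every $\ell\ge3$, since any added edge closes a triangle, yet it has only $N-1$ edges. Maximality forces only diameter at most $\ell-1$. The reversed Moore bound then gives a single vertex of degree about $N^{1/(\ell-1)}$, not many edges.

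The missing ingredient is a degree cap. Take a graph maximal subject to girth $>\ell$ and maximum degree $\le D$. Any two vertices of degree $<D$ are then at distance $\le\ell-1$, since otherwise the edge between them could be added without creating a cycle of length $\le\ell$. So they all lie in the ball of radius $\ell-1$ around one of them. Because the centre has degree $<D$, that ball has at most $\sum_{i=0}^{\ell-1}(D-1)^i\le D^{\ell-1}$ vertices. Hence $2m\ge D(N-D^{\ell-1})$. Taking $D=\lfloor(N/\ell)^{1/(\ell-1)}\rfloor$ gives $m\ge\frac19N^{1+1/(\ell-1)}$ whenever $N^{1/(\ell-1)}\ge72$, which is the only range where the claim is not trivial. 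The paper simply cites \cite[Lemma~15.3.2]{MatousekBook}, and the constant $72=8\cdot9$ comes from there.

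\emph{Item (3).} The constant has no slack: $\sqrt{N/8}/64$ is exactly what a factor-$2$ loss in $q$ produces, and your last step bounds the wrong quantity. You show $(q+1)^3/(8N)>\sqrt{N/8}/64$, but $m=(q+1)(q^2+q+1)=(q+1)^3-(q+1)^2+(q+1)$. When $q+1$ is close to $\sqrt{N/8}$, the subtracted term $(q+1)^2/(8N)$ is about $1/64$. So $q+1>\sqrt{N/8}$ alone does not give $m/(8N)>\sqrt{N/8}/64$.

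What you need is $N<8(q^2+q+1)$. This gives $m/(8N)>(q+1)/64$, which you then combine with $(q+1)^2>q^2+q+1>N/8$. The paper obtains it by taking $q$ to be the largest power of $2$ with $2(q^2+q+1)\le N$, so that $N<2((2q)^2+2q+1)<8(q^2+q+1)$. With primes, the same inequality follows from Bertrand's postulate in the form that every integer $n\ge2$ has a prime in $(n/2,n]$. If $Q>0$ solves $2(Q^2+Q+1)=N$, the largest prime $q\le\lfloor Q\rfloor$ satisfies $\lfloor Q\rfloor<2q$. Hence $Q<2q$ and $N<2(4q^2+2q+1)<8(q^2+q+1)$. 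Your treatment of $N<32768$ via $\pdim_\varepsilon\ge1$ is fine.
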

\begin{proof}
We apply Theorem~\ref{thm:girth} and use $\lfloor t\rfloor+1>t$.

(1) Take $G=K_N$, $\ell=2$, $m=N(N-1)/2$.

(2) Take $G=K_{\lfloor N/2\rfloor,\lceil N/2\rceil}$, $\ell=3$, $m=\lfloor N^2/4\rfloor\ge(N^2-1)/4$, so that $m/(8N)\ge(N^2-1)/(32N)>(N-1)/32$.

(3) Let $q$ be a prime power. The points and the lines of the projective plane over $\mathbb{F}_q$ are the subspaces of $\mathbb{F}_q^3$ of dimension $1$ and $2$. There are $q^2+q+1$ of each, and every line contains $q+1$ points. Two points lie on exactly one line, so the bipartite incidence graph has no cycle of length $4$, and its girth is at least $6$. It has $2(q^2+q+1)$ vertices and $(q+1)(q^2+q+1)$ edges. For $N\ge14$ let $q$ be the largest power of $2$ with $2(q^2+q+1)\le N$. Adding isolated vertices to the incidence graph gives a graph $G$ on $N$ vertices. This does not change the girth, and Lemma~\ref{lem:edges} still applies. By the choice of $q$ we have $N<2((2q)^2+2q+1)<8(q^2+q+1)$. With $\ell=5$ we get $m/(8N)>(q+1)/64$, and $(q+1)^2>q^2+q+1>N/8$ gives $q+1>\sqrt{N/8}$. For $N<14$ the right-hand side is less than $1$, and every metric with values in $\{1,\dots,5\}$ qualifies.

(4) Let $q$ be a prime power and fix a non-degenerate alternating bilinear form on $\mathbb{F}_q^4$. Take as points the $(q+1)(q^2+1)$ one-dimensional subspaces and as lines the totally isotropic two-dimensional subspaces. Every line contains $q+1$ points. Every point $p$ lies on $q+1$ lines, one for each one-dimensional subspace of the two-dimensional space $p^\perp/p$. Hence there are also $(q+1)(q^2+1)$ lines. Two points lie on at most one line, and there are no three pairwise orthogonal points outside a common line, since they would span a totally isotropic subspace of dimension $3$. A cycle of length $6$ in the incidence graph would consist of three points and three distinct lines, each line containing two of the points. Such points are pairwise orthogonal and do not lie on a common line, because the three lines are distinct. So the bipartite incidence graph has no cycles of length $4$ or $6$, and its girth is at least $8$. It has $2(q+1)(q^2+1)$ vertices and $(q+1)^2(q^2+1)$ edges. Take $\ell=7$.

(5) By~\cite[Lemma~15.3.2]{MatousekBook} there is a graph on $N$ vertices with girth greater than $\ell$ and at least $\frac19N^{1+1/(\ell-1)}$ edges, so $m/(8N)\ge N^{1/(\ell-1)}/72$.
\end{proof}

For a fixed $\varepsilon<1$ the integer $\ell(\varepsilon)=\lfloor\sqrt{(1+\varepsilon)/(1-\varepsilon)}\rfloor+1$ satisfies $\varepsilon<\varepsilon_{\ell(\varepsilon)}$, so item~(5), or item~(1) when $\ell(\varepsilon)=2$, gives metrics with $\pdim_\varepsilon>N^{c'(\varepsilon)}/72$, $c'(\varepsilon)=1/(\ell(\varepsilon)-1)$. Up to the values of the constants, items~(1) and~(2) also follow from Proposition~\ref{prop:rank} and the theorem of Alon, Frankl and R\"odl~\cite[Corollary~1.2]{Alon1985} on the existence of sign matrices of order $n$ whose sign-rank is linear in $n$; see Remark~\ref{rem:signrank}. The constants in items~(3) and~(4) are explicit but are not optimized: these bounds serve to establish the asymptotic orders in the corresponding ranges. The constructions are based on the incidence graphs of projective planes and symplectic generalized quadrangles.

\begin{remark}\label{rem:counting-limit}
Theorem~\ref{thm:girth} yields at most $O(m/N)$, because it only uses the number $2^m$ of subgraphs. The orders $\sqrt N$ and $N^{1/3}$ in items~(3) and~(4) therefore reflect the largest known numbers of edges of graphs of girth $6$ and $8$.
\end{remark}

The linear bounds of items~(1) and~(2) come from a standard count: the family of metrics and the comparison of its size with the number of sign patterns are those of Matou\v{s}ek and of Alon, Frankl and R\"odl, and the only new point is that the count is carried over to relative error in $\R^{p,q}$. The metrics of these items are not given explicitly; explicit metrics with linear bounds are constructed in Section~\ref{sec:explicit}.

\begin{remark}\label{rem:two-valued}
Theorem~\ref{thm:girth} uses the triangle inequality only to make $d_H$ a metric. For symmetric matrices $\mat{D}$ with $\mat{D}_{uv}\in\{1,M^2\}$ for $u\ne v$, where $M>1$, the same count with $G=K_N$ and the threshold $T=\frac12\bigl(1+\varepsilon+M^2(1-\varepsilon)\bigr)$ gives a matrix with $\pdim_\varepsilon(\mat{D})>(N-1)/16$ for $0\le\varepsilon<\varepsilon_M$, where $\varepsilon_M=(M^2-1)/(M^2+1)$. The proof of Theorem~\ref{thm:log-upper} uses only that all off-diagonal entries lie in $[1,M^2]$, and for $\varepsilon>\varepsilon_M$ it gives $\pdim_\varepsilon(\mat{D})\le\lceil64\ln N/((M^2+1)^2(\varepsilon-\varepsilon_M)^2)\rceil$ for every such matrix. For two-valued matrices without the triangle inequality, the threshold $\varepsilon_M$ therefore separates the linear and the logarithmic regimes for every $M$. The thresholds $3/5$ and $4/5$ for metrics come from the triangle inequality, which bounds the ratio of the two values by $2$ when all distances are chosen freely and by $3$ in the bipartite construction of Section~\ref{sec:explicit}.
\end{remark}

\subsection{Explicit metrics}\label{sec:explicit}

The metrics of Corollary~\ref{cor:lower} exist but are not given explicitly. Explicit examples are obtained by reducing the dimension to a rank and applying known bounds on the approximate rank.

\begin{lemma}\label{lem:shift}
If points $\vect{x}_u\in\R^{p,q}$ have the matrix of squared intervals $\mat{A}$, then $\rank(\mat{A}-T\mat{J})\le p+q+2$ for every $T\in\R$; in particular, this holds for every submatrix of $\mat{A}-T\mat{J}$.
\end{lemma}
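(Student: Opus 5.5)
The idea is to write $\mat{A}$ through the Gram matrix and count ranks of the summands. Let $\mat{G}$ be the Gram matrix of the points $\vect{x}_u$ with respect to the bilinear form $B$ of $\mathcal{Q}$, so that $\mat{G}_{uv}=B(\vect{x}_u,\vect{x}_v)$. As recalled in Section~\ref{sec:prelim}, $\mat{A}=\mathcal{K}(\mat{G})$. If $\vect{g}$ is the diagonal of $\mat{G}$, this means
\[
\mat{A}=\vect{g}\vect{1}^{\mathsf T}+\vect{1}\vect{g}^{\mathsf T}-2\mat{G}.
\]
Subtracting $T\mat{J}=T\vect{1}\vect{1}^{\mathsf T}$ gives
\[
\mat{A}-T\mat{J}=(\vect{g}-T\vect{1})\vect{1}^{\mathsf T}+\vect{1}\vect{g}^{\mathsf T}-2\mat{G}.
\]
The first two terms are rank-one matrices, and $\rank\mat{G}\le p+q$. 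The last bound holds because $\mat{G}=\mat{X}\mat{S}\mat{X}^{\mathsf T}$, where $\mat{X}$ is the $N\times(p+q)$ coordinate matrix and $\mat{S}=\operatorname{diag}(\mat{I}_p,-\mat{I}_q)$. Subadditivity of rank then gives $\rank(\mat{A}-T\mat{J})\le p+q+2$.

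A cleaner way to say the same thing: the row space of $\mat{A}-T\mat{J}$ lies in the span of $\vect{1}^{\mathsf T}$, $\vect{g}^{\mathsf T}$ and the rows of $\mat{G}$. This fits the $\mathcal{K}$ description, since $T\mat{J}$ lies in the kernel-type family $\vect{1}\vect{w}^{\mathsf T}+\vect{w}\vect{1}^{\mathsf T}$ with $\vect{w}=\frac T2\vect{1}$. Equivalently, $\mat{A}-T\mat{J}=\mathcal{K}(\mat{G}-\tfrac T2\mat{J})$ off the diagonal. We use the direct three-term decomposition instead, because it also handles the diagonal entries, which equal $-T$.

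For the claim about submatrices, recall that a submatrix obtained by selecting rows $R$ and columns $C$ equals $\mat{\Pi}_R(\mat{A}-T\mat{J})\mat{\Pi}_C^{\mathsf T}$ for coordinate selection matrices $\mat{\Pi}_R$ and $\mat{\Pi}_C$. Its rank is therefore at most $\rank(\mat{A}-T\mat{J})$. This is the form used later, where one restricts to the off-diagonal bipartite block, so the diagonal of $\mat{A}-T\mat{J}$ plays no role there.

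There is no real obstacle in this argument. The only point needing care is the bound $\rank\mat{G}\le p+q$ for an indefinite form, and the factorization $\mat{X}\mat{S}\mat{X}^{\mathsf T}$ settles it.
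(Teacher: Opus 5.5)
Your proof is correct and is essentially the paper's argument: the decomposition $\mat{A}-T\mat{J}=(\vect{g}-T\vect{1})\vect{1}^{\mathsf T}+\vect{1}\vect{g}^{\mathsf T}-2\mat{G}$ with $\rank\mat{G}\le p+q$, followed by subadditivity of rank. One aside is wrong, though you do not use it: since $\mathcal{K}(\mat{J})=0$, you have $\mathcal{K}(\mat{G}-\tfrac T2\mat{J})=\mat{A}$, not $\mat{A}-T\mat{J}$ off the diagonal.
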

\begin{proof}
Let $\mat{G}$ be the Gram matrix of the points with respect to the bilinear form of $\mathcal{Q}$, and let $\vect{g}$ be its diagonal. Then $\rank\mat{G}\le p+q$ and $\mat{A}-T\mat{J}=(\vect{g}-T\vect{1})\vect{1}^{\mathsf T}+\vect{1}\vect{g}^{\mathsf T}-2\mat{G}$.
\end{proof}

A Hadamard matrix of order $n$ is a matrix $\mat{H}\in\{\pm1\}^{n\times n}$ with $\mat{H}\mat{H}^{\mathsf T}=n\mat{I}$. Let $M>1$. On $2n$ points $L\cup R$, $L=\{u_1,\dots,u_n\}$, $R=\{w_1,\dots,w_n\}$, consider the symmetric matrix $\mat{D}^M_{\mat{H}}$ with zero diagonal that equals $4$ on pairs of distinct points of the same part, $1$ on the pairs $(u_i,w_j)$ with $\mat{H}_{ij}=-1$, and $M^2$ on the pairs with $\mat{H}_{ij}=+1$. For $1\le M\le3$ it is the matrix of squared distances of a metric $d^M_{\mat{H}}$: triples inside one part have sides $(2,2,2)$, and a triple of two points of one part and one point of the other has sides $(2,h_1,h_2)$ with $h_1,h_2\in\{1,M\}$, for which $h_1+h_2\ge2$ and $|h_1-h_2|\le M-1\le2$. For $M>3$ and $n\ge2$ it is not a metric: some column of a Hadamard matrix contains entries of both signs, and the corresponding triple has sides $(2,M,1)$.

\begin{proposition}\label{prop:hadamard}
Let $M>1$, $\varepsilon_M=(M^2-1)/(M^2+1)$, $0\le\varepsilon<\varepsilon_M$, and
\[
a=\tfrac12\bigl(M^2(1-\varepsilon)-1-\varepsilon\bigr),\qquad b=\tfrac12\bigl(M^2(1+3\varepsilon)-1-\varepsilon\bigr),\qquad \alpha=\frac{b-a}{a+b}=\frac{2\varepsilon M^2}{(M^2-1)(1+\varepsilon)}.
\]
Then $0\le\alpha<1$ and
\[
\pdim_\varepsilon(\mat{D}^M_{\mat{H}})\ \ge\ (1-\alpha^2)\,n-2=\frac{4ab}{(a+b)^2}\,n-2 .
\]
\end{proposition}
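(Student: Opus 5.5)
Let $\vect{x}_u\in\R^{p,q}$ be an $\varepsilon$-representation of $\mat{D}^M_{\mat{H}}$ with $p+q=\pdim_\varepsilon(\mat{D}^M_{\mat{H}})$, and let $\mat{A}\in\mathcal{A}_\varepsilon(\mat{D}^M_{\mat{H}})$ be its matrix of squared intervals. The plan is to turn the $L\times R$ block of $\mat{A}-T\mat{J}$ into an entrywise $\alpha$-approximation of $\mat{H}$, bound its rank from below by a Frobenius/trace-norm argument, and conclude with Lemma~\ref{lem:shift}.

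\emph{Shift and normalize.} Take $T=\frac12\bigl(1+\varepsilon+M^2(1-\varepsilon)\bigr)$, the threshold already used in Theorem~\ref{thm:girth}, and let $\mat{X}$ be the $L\times R$ block of $\mat{A}-T\mat{J}$.
\begin{itemize}
\item If $\mat{H}_{ij}=+1$, then $\mat{A}_{u_iw_j}\in[M^2(1-\varepsilon),M^2(1+\varepsilon)]$. Hence $\mat{X}_{ij}\in[a,b]$: the lower endpoint is $M^2(1-\varepsilon)-T=a$ and the upper is $M^2(1+\varepsilon)-T=b$.
\item If $\mat{H}_{ij}=-1$, then $\mat{A}_{u_iw_j}\in[1-\varepsilon,1+\varepsilon]$. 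Hence $-\mat{X}_{ij}\in[T-1-\varepsilon,\,T-1+\varepsilon]=[a,\,a+2\varepsilon]$.
\item Since $b-a=2\varepsilon M^2\ge2\varepsilon$, in both cases $\mat{X}_{ij}=\mat{H}_{ij}c_{ij}$ with $c_{ij}\in[a,b]$.
\end{itemize}
The condition $\varepsilon<\varepsilon_M$ is exactly $a>0$, which gives $0\le\alpha<1$. The identity $1-\alpha^2=4ab/(a+b)^2$ is algebra. Dividing by $(a+b)/2$, we get $\mat{B}=\frac{2}{a+b}\mat{X}$ with $\mat{B}_{ij}=\mat{H}_{ij}x_{ij}$ and $x_{ij}\in[1-\alpha,1+\alpha]$.

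\emph{Rank of an entrywise sign-preserving perturbation of $\mat{H}$.} Let $r=\rank\mat{B}$. Since $\|\mat{H}\|=\sqrt n$ in operator norm, duality of the operator and trace norms and Cauchy--Schwarz on the singular values give
\[
\langle\mat{H},\mat{B}\rangle\le\sqrt n\,\|\mat{B}\|_*\le\sqrt n\sqrt r\,\|\mat{B}\|_F .
\]
Here $\langle\mat{H},\mat{B}\rangle=\sum x_{ij}$ and $\|\mat{B}\|_F^2=\sum x_{ij}^2$, so
\[
r\ \ge\ n\cdot\frac{\bigl(\frac1{n^2}\sum x_{ij}\bigr)^2}{\frac1{n^2}\sum x_{ij}^2}.
\]
The key inequality is the P\'olya--Szeg\H{o}/Kantorovich bound: for numbers in $[m_1,m_2]$ with $m_1>0$, $(\mathbb{E}x)^2/\mathbb{E}x^2\ge4m_1m_2/(m_1+m_2)^2$. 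To prove it, use $(x-m_1)(m_2-x)\ge0$, which gives $\mathbb{E}x^2\le(m_1+m_2)\mathbb{E}x-m_1m_2$, and then AM--GM. With $m_1=1-\alpha$, $m_2=1+\alpha$ this yields $r\ge(1-\alpha^2)n$.

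\emph{Conclusion.} $\mat{B}$ is a scalar multiple of a submatrix of $\mat{A}-T\mat{J}$. Lemma~\ref{lem:shift} therefore gives $(1-\alpha^2)n\le r\le p+q+2$, which is the claim.

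The main obstacle is the rank step. The point is to use the multiplicative structure $x_{ij}\in[1-\alpha,1+\alpha]$ rather than only the additive bound $|\mat{B}_{ij}-\mat{H}_{ij}|\le\alpha$. This is what produces the sharp constant $4ab/(a+b)^2$ instead of a cruder one such as $(1-\alpha)^2/(1+\alpha)^2$.
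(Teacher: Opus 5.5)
Your proof is correct. The reduction is the same as the paper's: the same threshold $T$, the same $L\times R$ block of $\mat{A}-T\mat{J}$, the same check that $\mat{H}_{ij}\mat{X}_{ij}\in[a,b]$, and the same appeal to Lemma~\ref{lem:shift} at the end.

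The difference is the rank step. The paper cites \cite[Theorem~1.2]{AlonApproxRank} for the bound $(1-\alpha^2)n$ on matrices within $\alpha$ of a Hadamard matrix, and treats $\alpha=0$ separately by the nonsingularity of $\mat{H}$. You prove the bound inline. You use $\langle\mat{H},\mat{B}\rangle\le\|\mat{H}\|\,\|\mat{B}\|_{\mathrm{tr}}\le\sqrt{nr}\,\|\mat{B}\|_F$ together with the quadratic estimate coming from $(x-m_1)(m_2-x)\ge0$. This is the argument the paper gives for the Paley graph in Proposition~\ref{prop:paley}. The paper remarks afterwards that the same argument recovers the bound of \cite{AlonApproxRank} for Hadamard matrices. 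Your version makes the proposition self-contained and covers $\alpha=0$ without a separate case, at the cost of a few extra lines.

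One correction to your closing comment. For a $\pm1$ matrix, the conditions $|\mat{B}_{ij}-\mat{H}_{ij}|\le\alpha$ and $\mat{B}_{ij}=\mat{H}_{ij}x_{ij}$ with $x_{ij}\in[1-\alpha,1+\alpha]$ are equivalent. So you are not using any multiplicative structure beyond the additive bound. The additive hypothesis alone already gives $(1-\alpha^2)n$, which is exactly how the paper uses \cite[Theorem~1.2]{AlonApproxRank}. What produces the sharp constant $1-\alpha^2$, instead of $(1-\alpha)^2/(1+\alpha)^2$, is bounding $\sum x_{ij}^2$ jointly with $\sum x_{ij}$ via $(x-m_1)(m_2-x)\ge0$. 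The cruder constant comes from bounding the two sums separately by their extreme values.
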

\begin{proof}
The inequality $\alpha<1$ is equivalent to $\varepsilon(M^2+1)<M^2-1$. Let points in $\R^{p,q}$ have a matrix of squared intervals $\mat{A}\in\mathcal{A}_\varepsilon(\mat{D}^M_{\mat{H}})$, let $T=\frac12\bigl(1+\varepsilon+M^2(1-\varepsilon)\bigr)$, and let $\mat{W}$ be the block of $\mat{A}-T\mat{J}$ on the pairs $(u_i,w_j)$. By Lemma~\ref{lem:shift}, $\rank\mat{W}\le p+q+2$. If $\mat{H}_{ij}=-1$, then $\mat{A}_{u_iw_j}\in[1-\varepsilon,1+\varepsilon]$ and $-\mat{W}_{ij}\in[T-1-\varepsilon,\,T-1+\varepsilon]$; if $\mat{H}_{ij}=+1$, then $\mat{W}_{ij}\in[M^2(1-\varepsilon)-T,\,M^2(1+\varepsilon)-T]$. Since $T-1-\varepsilon=M^2(1-\varepsilon)-T=a>0$, $M^2(1+\varepsilon)-T=b$ and $T-1+\varepsilon=b-2\varepsilon(M^2-1)\le b$, in both cases $\mat{H}_{ij}\mat{W}_{ij}\in[a,b]$. Hence the matrix $\frac{2}{a+b}\mat{W}$ differs from $\mat{H}$ by at most $\alpha$ in every entry. By the theorem of Alon, Lee, Shraibman and Vempala~\cite[Theorem~1.2]{AlonApproxRank}, every matrix that differs entrywise by at most $\alpha\in(0,1)$ from a Hadamard matrix of order $n$ has rank at least $(1-\alpha^2)n$; for $\alpha=0$ this holds because $\mat{H}$ is nonsingular. Therefore $p+q+2\ge(1-\alpha^2)n$, and $1-\alpha^2=4ab/(a+b)^2$.
\end{proof}

\begin{corollary}\label{cor:hadamard}
For every $N\ge2$ there is an explicit metric on $N$ points with values in $\{1,2,3\}$ such that for every $0\le\varepsilon<4/5$
\[
\pdim_\varepsilon>\frac{(4-5\varepsilon)(4+13\varepsilon)}{64(1+\varepsilon)^2}\,N-2,
\]
and an explicit metric on $N$ points with values in $\{1,2\}$ such that for every $0\le\varepsilon<3/5$
\[
\pdim_\varepsilon>\frac{(3-5\varepsilon)(3+11\varepsilon)}{36(1+\varepsilon)^2}\,N-2 .
\]
\end{corollary}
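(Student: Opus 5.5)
The plan is to apply Proposition~\ref{prop:hadamard} with $M=3$ and with $M=2$, and to handle values of $N$ that are not of the form $2n$ with $n$ the order of a Hadamard matrix by padding. Explicit Hadamard matrices are known for orders that are powers of $2$ (Sylvester's construction), so take $n=2^j$, the largest power of $2$ with $2n\le N$. Then $N<4n$, that is, $n>N/4$. For $M\le3$ the matrix $\mat{D}^M_{\mat{H}}$ is the matrix of squared distances of the metric $d^M_{\mat{H}}$ with values in $\{1,2,M\}$. To obtain $N$ points, add $N-2n$ extra points and extend the metric by duplicating a point at distance $2$. Concretely, place each extra point at distance $2$ from every other point of $L$ and of the new points, and copy the distances of $u_1$ to $R$. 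The new triangles again have sides from $\{2,1,M\}$ of the types already checked. Since $\pdim_\varepsilon$ is monotone under restriction to a subset (restrict the representation), the extended metric has $\pdim_\varepsilon\ge\pdim_\varepsilon(\mat{D}^M_{\mat{H}})$.

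Next, compute $4ab/(a+b)^2$. We have $a+b=M^2(1+\varepsilon)-1-\varepsilon=(M^2-1)(1+\varepsilon)$.

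For $M=3$: $a=\frac12(9-9\varepsilon-1-\varepsilon)=4-5\varepsilon$ and $b=\frac12(9+27\varepsilon-1-\varepsilon)=4+13\varepsilon$, while $a+b=8(1+\varepsilon)$. Hence $4ab/(a+b)^2=(4-5\varepsilon)(4+13\varepsilon)/(16(1+\varepsilon)^2)$. Also $\varepsilon_3=4/5$, so the condition $\varepsilon<\varepsilon_M$ of Proposition~\ref{prop:hadamard} holds.

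For $M=2$: $a=\frac12(3-5\varepsilon)$, $b=\frac12(3+11\varepsilon)$ and $a+b=3(1+\varepsilon)$, so $4ab/(a+b)^2=(3-5\varepsilon)(3+11\varepsilon)/(9(1+\varepsilon)^2)$. Here $\varepsilon_2=3/5$.

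Multiplying by $n>N/4$ gives the stated bounds, with the strict inequality coming from $n>N/4$ and the factor being positive.

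For small $N$ (namely $N<2$ is excluded, and $N=2,3$ gives $n=1$), the same argument works with $\mat{H}=(1)$. Alternatively, note that the right-hand side is then below $1$, since the coefficients are at most $1/4$ and $N/4-2<1$, so any metric on at least $2$ points, which has $\pdim_\varepsilon\ge1$, qualifies.

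The only delicate step is checking that the padded metric satisfies the triangle inequality. This reduces to the triples already verified before Proposition~\ref{prop:hadamard}, since every new point behaves as a copy of $u_1$, with distance $2$ to its twin.
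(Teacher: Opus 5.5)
Your proposal is correct and follows the paper's proof: Sylvester matrices of order $n=2^j$ with $2n\le N$ (so $n>N/4$), Proposition~\ref{prop:hadamard} with $M=3$ and $M=2$, the same computation of $4ab/(a+b)^2$, and padding to $N$ points. The only difference is the padding. You add copies of $u_1$ to the left part, whereas the paper puts the new points at distance $2$ from all other points, so that every new triple has sides $(2,2,h)$. Both choices give a metric with the required values.
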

\begin{proof}
Let $n=2^j$ be the largest power of two with $2n\le N$, so that $n>N/4$, and let $\mat{H}=\mat{H}_2^{\otimes j}$ be the Sylvester matrix of order $n$, $\mat{H}_2=\bigl(\begin{smallmatrix}1&1\\1&-1\end{smallmatrix}\bigr)$. For $M=3$ we have $\varepsilon_3=4/5$, $a=4-5\varepsilon$, $b=4+13\varepsilon$, $a+b=8(1+\varepsilon)$; for $M=2$ we have $\varepsilon_2=3/5$, $a=\frac12(3-5\varepsilon)$, $b=\frac12(3+11\varepsilon)$, $a+b=3(1+\varepsilon)$. Add to the $2n$ points of the metric $d^M_{\mat{H}}$ another $N-2n$ points at distance $2$ from all other points. Triples containing new points have sides $(2,2,h)$ with $h\in\{1,2,M\}$, so the result is a metric, and an $\varepsilon$-representation of it restricts to an $\varepsilon$-representation of $d^M_{\mat{H}}$. By Proposition~\ref{prop:hadamard}, $\pdim_\varepsilon\ge4abn/(a+b)^2-2>abN/(a+b)^2-2$.
\end{proof}

The same device applies to the metric of the Paley graph, which is not bipartite. Let $N\equiv1\pmod4$ be a prime power and $\chi$ the quadratic character of the field $\mathbb{F}_N$, $\chi(0)=0$. The vertices of the Paley graph are the elements of $\mathbb{F}_N$, and $x,y$ are adjacent if $\chi(x-y)=1$. Its Seidel matrix $\mat{C}=(-\chi(x-y))_{x,y\in\mathbb{F}_N}$ is symmetric, has zero diagonal, equals $-1$ on the edges and $1$ on the other pairs, and satisfies $\mat{C}^2=N\mat{I}-\mat{J}$. Indeed, $\chi(-1)=1$ because $N\equiv1\pmod4$, so $(\mat{C}^2)_{xz}=\sum_y\chi(y)\chi(y+h)$ with $h=z-x$. For $h=0$ the sum is $N-1$. For $h\ne0$ and $y\ne0$ we have $\chi(y)\chi(y+h)=\chi(1+hy^{-1})$, and $1+hy^{-1}$ runs over all elements of the field except $1$; since $\sum_t\chi(t)=0$, the sum equals $-1$. Hence the spectral norm of $\mat{C}$ is $\|\mat{C}\|=\sqrt N$.

\begin{proposition}\label{prop:paley}
Let $N\equiv1\pmod4$ be a prime power, let $d$ be the metric on $\mathbb{F}_N$ that equals $1$ on the edges of the Paley graph and $2$ on the other pairs, and let $0\le\varepsilon<3/5$. Then
\[
\pdim_\varepsilon(d)\ \ge\ \frac{(3-5\varepsilon)(3+11\varepsilon)(N-1)-(5-3\varepsilon)^2}{9(1+\varepsilon)^2}-2 .
\]
\end{proposition}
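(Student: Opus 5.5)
The plan is to run the argument of Proposition~\ref{prop:hadamard} with $M=2$, replacing the Hadamard matrix by the Seidel matrix $\mat{C}$. In place of the approximate-rank theorem I would use a direct trace-norm bound based on $\|\mat{C}\|=\sqrt N$. I will first check that $d$ is a metric: it takes only the values $1$ and $2$.

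\emph{Step 1: reduction to a rank.} Let points in $\R^{p,q}$ have a matrix of squared intervals $\mat{A}\in\mathcal{A}_\varepsilon(d)$. Put $T=\frac12\bigl(1+\varepsilon+4(1-\varepsilon)\bigr)=\frac12(5-3\varepsilon)$ and $\mat{W}=\mat{A}-T\mat{J}$. By Lemma~\ref{lem:shift}, $\rank\mat{W}\le p+q+2$. As in the proof of Proposition~\ref{prop:hadamard} with $M=2$, every off-diagonal entry satisfies $\mat{C}_{xy}\mat{W}_{xy}\in[a,b]$, where
\[
a=\tfrac12(3-5\varepsilon)>0,\qquad b=\tfrac12(3+11\varepsilon).
\]
The sign works out correctly: $\mat{C}=-1$ on edges, where $d=1$ and $\mat{W}<0$, and $\mat{C}=1$ on non-edges, where $d=2$ and $\mat{W}>0$. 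The diagonal entries are $\mat{W}_{xx}=-T$, while $\mat{C}_{xx}=0$.

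\emph{Step 2: the spectral bound.} For any matrix $\mat{B}$ we have $\langle\mat{B},\mat{C}\rangle\le\|\mat{B}\|_{\mathrm{tr}}\|\mat{C}\|$ and $\|\mat{B}\|_{\mathrm{tr}}^2\le\rank\mat{B}\cdot\|\mat{B}\|_F^2$. Together these give
\[
\rank\mat{W}\ \ge\ \frac{S^2}{N\|\mat{W}\|_F^2},\qquad S=\langle\mat{W},\mat{C}\rangle .
\]
The diagonal contributes nothing to $S$. The key trick is the pointwise inequality $t^2\le(a+b)t-ab$ for $t\in[a,b]$. Applied to $t=\mat{C}_{xy}\mat{W}_{xy}$ off the diagonal, and using $\mat{W}_{xx}^2=T^2$ on it, this gives
\[
\|\mat{W}\|_F^2\le(a+b)S-e,\qquad e=abN(N-1)-NT^2 .
\]

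\emph{Step 3: optimization.} If $e\le0$ the claimed bound is negative and nothing needs to be proved. Otherwise consider $S\mapsto S^2/((a+b)S-e)$ on the range where the denominator is positive. Its minimum is $4e/(a+b)^2$, attained at $S=2e/(a+b)$. Hence
\[
\rank\mat{W}\ge\frac{4\bigl(ab(N-1)-T^2\bigr)}{(a+b)^2}.
\]
Substituting $a+b=3(1+\varepsilon)$, $4ab=(3-5\varepsilon)(3+11\varepsilon)$ and $4T^2=(5-3\varepsilon)^2$ gives exactly the stated expression plus $2$, and subtracting the $2$ from Lemma~\ref{lem:shift} finishes the proof.

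The main obstacle is Step~2. A naive bound $\|\mat{W}\|_F^2\le b^2N(N-1)+NT^2$ loses the factor $4ab/(a+b)^2$, so the quadratic inequality on $[a,b]$ is what yields the precise constant. The nonzero diagonal, which is absent in the bipartite case, must be tracked separately; it produces the $-(5-3\varepsilon)^2$ term.
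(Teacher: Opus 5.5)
Your proposal is correct and follows the paper's proof essentially step by step. The shared steps are the shift by $T=\frac12(5-3\varepsilon)$ with Lemma~\ref{lem:shift}, the bound $S^2\le N\rank(\mat{W})\|\mat{W}\|_F^2$ from trace--spectral duality and Cauchy--Schwarz, and the quadratic estimate $t^2\le(a+b)t-ab$, which gives $\|\mat{W}\|_F^2\le(a+b)S-e$ with the diagonal contributing $NT^2$. The only difference is cosmetic: you minimize $S^2/((a+b)S-e)$ by calculus, whereas the paper uses $((a+b)S-2e)^2\ge0$. You should state explicitly that the actual $S$ lies in the range where $(a+b)S-e\ge\|\mat{W}\|_F^2>0$, since this is what justifies replacing $\|\mat{W}\|_F^2$ by its upper bound in the denominator.
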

\begin{proof}
Let points in $\R^{p,q}$ have a matrix of squared intervals $\mat{A}\in\mathcal{A}_\varepsilon(d)$, let $T=\frac12(5-3\varepsilon)$ and $\mat{W}=\mat{A}-T\mat{J}$. By Lemma~\ref{lem:shift}, $\rank\mat{W}\le p+q+2$, and $\mat{W}_{xx}=-T$. Put $a=\frac12(3-5\varepsilon)>0$ and $b=\frac12(3+11\varepsilon)$. On the edges $\mat{A}_{xy}\in[1-\varepsilon,1+\varepsilon]$ and $\mat{W}_{xy}\in[-a-2\varepsilon,-a]$; on the other pairs $\mat{A}_{xy}\in[4-4\varepsilon,4+4\varepsilon]$ and $\mat{W}_{xy}\in[a,b]$. Since $a+2\varepsilon\le b$, for $x\ne y$ we have $\sign\mat{W}_{xy}=\mat{C}_{xy}$ and $a\le|\mat{W}_{xy}|\le b$.

Let $\|\mat{W}\|_{\mathrm{tr}}$ be the sum of the singular values of $\mat{W}$ and $\|\mat{W}\|_F$ its Frobenius norm, and let $S=\sum_{x\ne y}|\mat{W}_{xy}|=\sum_{x,y}\mat{W}_{xy}\mat{C}_{xy}$. From $(t-a)(b-t)\ge0$ we get $t^2\le(a+b)t-ab$ for $t\in[a,b]$. Summing over the $N(N-1)$ ordered pairs $x\ne y$ and adding the $N$ diagonal entries gives
\[
\|\mat{W}\|_F^2\le(a+b)S-c_0,\qquad c_0=ab\,N(N-1)-NT^2 .
\]
By the duality of the trace and spectral norms, $S\le\|\mat{W}\|_{\mathrm{tr}}\|\mat{C}\|$, and $\|\mat{W}\|_{\mathrm{tr}}\le\sqrt{\rank\mat{W}}\,\|\mat{W}\|_F$ by the Cauchy--Schwarz inequality for the singular values; hence $S^2\le N\rank(\mat{W})\|\mat{W}\|_F^2$. If $c_0\le0$, the right-hand side of the claimed inequality is at most $-2$. Let $c_0>0$. From $((a+b)S-2c_0)^2\ge0$ we get $(a+b)^2S^2\ge4c_0((a+b)S-c_0)$, and $(a+b)S-c_0\ge\|\mat{W}\|_F^2>0$. Therefore
\[
\rank\mat{W}\ \ge\ \frac{S^2}{N\,((a+b)S-c_0)}\ \ge\ \frac{4c_0}{N(a+b)^2}=\frac{4ab(N-1)-4T^2}{(a+b)^2}.
\]
It remains to substitute $4ab=(3-5\varepsilon)(3+11\varepsilon)$, $(a+b)^2=9(1+\varepsilon)^2$ and $4T^2=(5-3\varepsilon)^2$.
\end{proof}

The rank bounds in Propositions~\ref{prop:hadamard} and~\ref{prop:paley} are known. The inequality $\rank\mat{W}\ge S^2/(\|\mat{C}\|^2\|\mat{W}\|_F^2)$ is the standard trace-norm bound on the approximate rank, cf.~\cite[Prop.~2.2, 2.3]{Sherstov2011}; the quadratic estimate of $\|\mat{W}\|_F^2$ sharpens the constant, and for a Hadamard matrix the same argument gives exactly the bound $(1-\alpha^2)n$ of~\cite[Theorem~1.2]{AlonApproxRank}. What is new here is only the reduction of the dimension to a rank (Lemma~\ref{lem:shift}) and the choice of configurations. The coefficient of $N$ in Proposition~\ref{prop:paley} tends to $1$ as $\varepsilon\to0$ and to $0$ as $\varepsilon\to3/5$. Bilu and Linial~\cite{BiluLinial2005} conjecture that the sphericity of the Paley graph is linear; Proposition~\ref{prop:paley} does not bear on this conjecture, because an $\varepsilon$-representation provides the gap $a\le|\mat{W}_{xy}|\le b$, which the definition of sphericity does not.

\begin{remark}\label{rem:signrank}
Lemma~\ref{lem:shift} also gives a bound through the sign-rank. If the squared distances between the two parts of a bipartite matrix take the values $1$ and $M^2$ according to the signs of a matrix $\mat{S}\in\{\pm1\}^{n\times n}$, and $\varepsilon<\varepsilon_M$, then the signs of the block $\mat{W}$ in the proof of Proposition~\ref{prop:hadamard} are those of $\mat{S}$, and $\pdim_\varepsilon\ge\operatorname{signrank}(\mat{S})-2$. Combined with the theorem of Alon, Frankl and R\"odl on the existence of sign matrices of order $n$ with sign-rank at least $n/32$~\cite[Corollary~1.2 and Sect.~6]{Alon1985}, this gives another proof of item~(2) of Corollary~\ref{cor:lower} with a worse constant. For a Hadamard matrix, Forster's theorem~\cite{Forster2002} gives only $\operatorname{signrank}(\mat{H})\ge\sqrt n$, and for the Sylvester matrices of Corollary~\ref{cor:hadamard} the sign-rank cannot give more. The matrix of order $4$ with ones on the diagonal and $-1$ elsewhere has sign-rank $3$~\cite[Claim~3]{AlonMoranYehudayoff2016} and is obtained from $\mat{H}_2^{\otimes2}$ by permuting rows and columns and changing their signs. Hence the sign-rank of the Sylvester matrix of order $4^j$ is at most $3^j$, as noted in~\cite[Sect.~4]{AlonMoranYehudayoff2016}. The linear bound of Proposition~\ref{prop:hadamard} uses, besides the signs, the two-sided bounds on $|\mat{W}_{ij}|$.
\end{remark}

\section{The high-error range}\label{sec:high}

For metrics with distances in $[1,\Phi]$ let $\varepsilon_\Phi=(\Phi^2-1)/(\Phi^2+1)$. For $\varepsilon>\varepsilon_\Phi$ a single value of the squared interval is admissible for all pairs, and the dimension becomes logarithmic.

\begin{theorem}\label{thm:log-upper}
Let $d$ be a metric on $N\ge2$ points with all distances in $[1,\Phi]$, $\Phi\ge1$, and let $\varepsilon_\Phi<\varepsilon<1$. Put $L=(1-\varepsilon)\Phi^2$, $U=1+\varepsilon$, $c=\frac12(L+U)$ and $g=U-L=(\Phi^2+1)(\varepsilon-\varepsilon_\Phi)>0$. Then $d$ has an $\varepsilon$-representation in the Euclidean space of dimension
\[
\Bigl\lceil\frac{16\,c^2\ln N}{g^2}\Bigr\rceil\ \le\ \Bigl\lceil\frac{64\ln N}{(\Phi^2+1)^2(\varepsilon-\varepsilon_\Phi)^2}\Bigr\rceil .
\]
\end{theorem}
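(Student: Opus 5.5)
The plan is to ignore the metric and represent the constant value $c$ on all pairs, up to a small error. The admissible interval for the squared distance of a pair $u\ne v$ is $[(1-\varepsilon)d(u,v)^2,(1+\varepsilon)d(u,v)^2]$. Since $1\le d(u,v)\le\Phi$, this interval contains $[L,U]=[(1-\varepsilon)\Phi^2,1+\varepsilon]$, which is nonempty with length $g>0$ exactly when $\varepsilon>\varepsilon_\Phi$. So it suffices to find points $\vect{x}_u$ in a Euclidean space $\R^k$ with
\[
\bigl|\|\vect{x}_u-\vect{x}_v\|^2-c\bigr|\le g/2\qquad(u\ne v).
\]
Then every squared distance lies in $[L,U]$, and the representation has signature $(k,0)$.

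To get such points, start from the regular simplex: put $\vect{a}_u=\sqrt{c/2}\,\vect{e}_u\in\R^N$. All pairwise squared distances then equal $c$ exactly, and $\|\vect{a}_u\|^2=c/2$.

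Next apply the Johnson--Lindenstrauss lemma (Lemma~\ref{lem:jl}) to the $N$ points $\vect{a}_u$ with $\eta=g/(2c)$. The resulting linear map $\varphi$ gives squared distances in $[(1-\eta)c,(1+\eta)c]=[c-g/2,c+g/2]$. We need $\eta<1$. This holds because $g<2c$ is equivalent to $U-L<L+U$, that is $L>0$, which is true since $\varepsilon<1$.

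It remains to check the dimension. Lemma~\ref{lem:jl} requires $k\ge4\ln N/(\eta^2/2-\eta^3/3)$. Bounding the denominator via $\eta^2/2-\eta^3/3\ge\eta^2/6$ gives $24\ln N/\eta^2=96c^2\ln N/g^2$, which is weaker than the stated $16c^2\ln N/g^2$. This is the main obstacle: matching the constant~$16$.

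To get $16$, I would replace the JL lemma by a direct random-sign construction, as suggested by the summary in the introduction ("a random sign configuration"). Take independent uniform $\vect{x}_u\in\{\pm s\}^k$ with $s^2=c/(2k)$. Then $\|\vect{x}_u-\vect{x}_v\|^2=\frac{2c}{k}\cdot\#\{i:x_{ui}\ne x_{vi}\}$, a rescaled binomial variable $\mathrm{Bin}(k,1/2)$ with mean $c$. By Hoeffding's inequality, the probability of a deviation greater than $g/2$ is at most
\[
2\exp\bigl(-2k(g/(4c))^2\bigr)=2\exp\bigl(-kg^2/(8c^2)\bigr).
\]
A union bound over the $\binom N2<N^2/2$ pairs gives total failure probability below $N^2\exp(-kg^2/(8c^2))$. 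This is at most $1$ once $k\ge16c^2\ln N/g^2$, and the strict inequality from $\binom N2<N^2/2$ gives positive success probability. That yields exactly the first bound.

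The final inequality follows from $c=\frac12(L+U)\le\frac12(\Phi^2+1)$. Indeed, $L+U=(1-\varepsilon)\Phi^2+1+\varepsilon\le\Phi^2+1$ whenever $\Phi\ge1$, since this amounts to $\varepsilon(1-\Phi^2)\le0$. Hence $16c^2\le4(\Phi^2+1)^2$, and with $g=(\Phi^2+1)(\varepsilon-\varepsilon_\Phi)$ we get
\[
\frac{16c^2}{g^2}\le\frac{4(\Phi^2+1)^2}{(\Phi^2+1)^2(\varepsilon-\varepsilon_\Phi)^2}\le\frac{64}{(\Phi^2+1)^2(\varepsilon-\varepsilon_\Phi)^2},
\]
where the last step uses $\Phi^2+1\ge2$, so that $4\le16\le64/(\Phi^2+1)^2$ fails in general; more carefully, $4(\Phi^2+1)^2\le64$ is not needed, since the last expression has $(\Phi^2+1)^2$ in the denominator and $4(\Phi^2+1)^2/((\Phi^2+1)^2(\cdot)^2)=4/(\cdot)^2$. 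The chain therefore reduces to comparing $4/(\varepsilon-\varepsilon_\Phi)^2$ with $64/((\Phi^2+1)^2(\varepsilon-\varepsilon_\Phi)^2)$, and I would recheck this final arithmetic step, including the exact form of the stated bound, when writing out the proof.
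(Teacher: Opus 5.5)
Your construction is the same as the paper's. You take independent uniform vectors in $\{\pm\sqrt{c/(2k)}\}^k$, apply Hoeffding's inequality to the Hamming distance, and take a union bound over the $\binom N2$ pairs. Your computation of the first bound $\lceil 16c^2\ln N/g^2\rceil$ is correct, and the Johnson--Lindenstrauss detour can simply be dropped.

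The gap is the final inequality, which you leave unresolved, and the route you try cannot close it. From $c\le\frac12(\Phi^2+1)$ you get $16c^2/g^2\le 4/(\varepsilon-\varepsilon_\Phi)^2$. This is at most $64/\bigl((\Phi^2+1)^2(\varepsilon-\varepsilon_\Phi)^2\bigr)$ only if $(\Phi^2+1)^2\le16$, that is $\Phi^2\le3$. That fails for every $\Phi>\sqrt3$, in particular for $\Phi\ge5$, the case used in Theorem~\ref{thm:main}. The estimate $c\le\frac12(\Phi^2+1)$ is far too weak because it ignores the hypothesis $\varepsilon>\varepsilon_\Phi$.

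The missing observation is one line. The condition $g>0$ means $L<U$, so $c=\frac12(L+U)<U=1+\varepsilon<2$, hence $16c^2<64$. Since $g^2=(\Phi^2+1)^2(\varepsilon-\varepsilon_\Phi)^2$, this gives $16c^2\ln N/g^2<64\ln N/\bigl((\Phi^2+1)^2(\varepsilon-\varepsilon_\Phi)^2\bigr)$, and the ceilings compare the same way. This is exactly how the paper finishes, with the remark $c\le U<2$.
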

\begin{proof}
If $L\le y\le U$ and $1\le t\le\Phi^2$, then $(1-\varepsilon)t\le(1-\varepsilon)\Phi^2=L\le y\le U=1+\varepsilon\le(1+\varepsilon)t$. So it suffices to find points in $\R^n$ all of whose squared distances lie in $[L,U]=[c-g/2,c+g/2]$.

Let $n=\lceil16c^2\ln N/g^2\rceil$ and choose $\vect{x}_1,\dots,\vect{x}_N\in\{-\sqrt{c/(2n)},\sqrt{c/(2n)}\}^n$ with independent uniform coordinates. For $i\ne j$ the vectors differ in $H_{ij}$ coordinates, where $H_{ij}$ is a sum of $n$ independent variables with values in $\{0,1\}$ and mean $1/2$, and $\|\vect{x}_i-\vect{x}_j\|^2=(2c/n)H_{ij}$. By Hoeffding's inequality~\cite[Theorem~2]{Hoeffding1963}
\[
\Pr\Bigl[\bigl|\|\vect{x}_i-\vect{x}_j\|^2-c\bigr|>\frac g2\Bigr]=\Pr\Bigl[\Bigl|H_{ij}-\frac n2\Bigr|>\frac{g}{4c}\,n\Bigr]\le2\exp\Bigl(-\frac{g^2n}{8c^2}\Bigr)\le\frac2{N^2}.
\]
There are $N(N-1)/2$ pairs, so with probability at least $1/N$ all squared distances lie in $[L,U]$. Finally, $c\le U<2$.
\end{proof}

At $\varepsilon=\varepsilon_\Phi$ we have $g=0$, and the argument does not apply.

For the lower bound we use the following consequence of the method of Alon~\cite{Alon2009}. For $0<\varepsilon<1$ put $\lambda=\ln(1/\varepsilon)$ and
\[
\beta(\varepsilon)=\begin{cases}1,&\lambda\ge\ln(2e),\\ \lambda/\bigl(2\ln(2e/\lambda)\bigr),&\lambda<\ln(2e),\end{cases}\qquad c(\varepsilon)=\frac{\beta(\varepsilon)}{2\lambda}.
\]
Then $0<\beta\le1$ and $\beta\ln(2e/\beta)\le\lambda$: for $\lambda<\ln(2e)$ and $\Lambda=\ln(2e/\lambda)>1$ we have $\beta\ln(2e/\beta)=\frac{\lambda}{2\Lambda}(\Lambda+\ln(2\Lambda))\le\lambda$, because $\ln(2\Lambda)\le\Lambda$. For $\varepsilon>1/(2e)$ we get $c(\varepsilon)=1/(4\ln(2e/\ln(1/\varepsilon)))$.

\begin{lemma}\label{lem:perturbed}
Let $\mat{M}$ be a real symmetric matrix of order $n\ge4/\varepsilon^2$ with $\mat{M}_{ii}=1$ and $|\mat{M}_{ij}|\le\varepsilon$ for $i\ne j$, where $0<\varepsilon<1$. Then $\rank\mat{M}>1+c(\varepsilon)\ln n$.
\end{lemma}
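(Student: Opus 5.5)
The plan is Alon's method: take Hadamard powers of $\mat{M}$, and compare the rank bound coming from the trace with the upper bound on the rank of a Hadamard power. Let $r=\rank\mat{M}$.

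The first ingredient is the standard trace inequality. For a real symmetric matrix $\mat{B}$ with $\mat{B}_{ii}=1$ and $|\mat{B}_{ij}|\le\delta$ for $i\ne j$, apply Cauchy--Schwarz to the eigenvalues:
\[
n^2=(\operatorname{tr}\mat{B})^2\le\rank\mat{B}\cdot\sum_{i,j}\mat{B}_{ij}^2\le\rank\mat{B}\,(n+n^2\delta^2).
\]
Hence $\rank\mat{B}\ge n/(1+n\delta^2)$. In particular, if $n\delta^2\le1$ then $\rank\mat{B}\ge n/2$.

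The second ingredient is the rank of Hadamard powers. Let $\mat{B}=\mat{M}^{\circ t}$. It has unit diagonal and off-diagonal entries at most $\varepsilon^t$ in modulus. Writing $\mat{M}$ as a sum of $r$ rank-one symmetric matrices, $\mat{M}^{\circ t}$ lies in the span of symmetric tensors of degree $t$ in $r$ variables. Therefore
\[
\rank\mat{M}^{\circ t}\le\binom{r+t-1}{t}.
\]

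Next I choose $t$ and compare the two bounds. Take $t$ minimal with $n\varepsilon^{2t}\le1$, that is $t=\lceil\ln n/(2\lambda)\rceil$, where $\lambda=\ln(1/\varepsilon)$. The hypothesis $n\ge4/\varepsilon^2$ gives $\ln n\ge2\lambda+\ln4$, so $t\ge2$ and $t\le\ln n/(2\lambda)+1$. Combining the two ingredients,
\[
\frac n2\le\binom{r+t-1}{t}\le\Bigl(\frac{e(r+t-1)}{t}\Bigr)^{t}.
\]
Suppose, for contradiction, that $r-1\le c\ln n$ with $c=c(\varepsilon)=\beta/(2\lambda)$. Then $(r-1)/t\lesssim\beta$ (to be checked against the ceiling in $t$). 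Using the estimate
\[
\binom{r-1+t}{t}\le\exp\Bigl(t\,h\bigl(\tfrac{r-1}{t}\bigr)\Bigr),\qquad h(x)=x\ln\bigl(e(1+x)/x\bigr),
\]
the target is an inequality of the form $t\,h(\beta)<\ln(n/2)$. The property $\beta\ln(2e/\beta)\le\lambda$ stated before the lemma is designed for exactly this: with $x\le\beta\le1$, $h(x)\le x\ln(2e/x)\le\lambda$. Hence the binomial is at most $e^{t\lambda}$, while $t\lambda\approx\tfrac12\ln n$, which is below $\ln(n/2)$ since $n/2>\sqrt n$ for $n\ge4/\varepsilon^2\ge4$ (it may need $n>4$, which holds since $\varepsilon<1$). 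This contradiction gives $r>1+c\ln n$. In the case $\lambda\ge\ln(2e)$ we have $\beta=1$, and then $r\le t$ and $\binom{2t-1}{t}<4^t$, which should be handled the same way.

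The main obstacle is the careful bookkeeping with the ceiling in $t$. The slack $t\le\ln n/(2\lambda)+1$ adds a factor of up to $e^{\lambda}=1/\varepsilon$ to the binomial bound, and this must be absorbed using $n\ge4/\varepsilon^2$, i.e. $\sqrt n\ge2/\varepsilon$. The exact exponent $t\lambda$ against $\ln n-\ln2$ must also be verified. If it is too tight, I would instead bound $\binom{r-1+t}{r-1}$ via $(e t/(r-1)+e)^{r-1}$ and use the monotonicity of $x\ln(2e/x)$ on $(0,1]$; if still too tight, I would first consider the strict inequality $\rank\mat{M}^{\circ t}\ge n/(1+n\varepsilon^{2t})$ with $t$ chosen to make $n\varepsilon^{2t}$ slightly below $1$.
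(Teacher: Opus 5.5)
Your proposal is essentially the paper's proof. Both use the Cauchy--Schwarz trace bound for $\mat{M}^{\circ t}$ with $t=\lceil\ln n/(2\lambda)\rceil$, the bound $\rank\mat{M}^{\circ t}\le\binom{r+t-1}{t}$, the estimate $\binom{m+t}{m}\le(2et/m)^m$, and the monotonicity of $x\ln(2e/x)$ together with $\beta\ln(2e/\beta)\le\lambda$; since this inequality holds for $\beta=1$, that case needs no separate treatment. The step you flag as unfinished does close, but the reason is not ``$n/2>\sqrt n$'': since $\lceil x\rceil<x+1$, you get $t\lambda<\tfrac12\ln n+\lambda\le\ln(n/2)$, the last step being $\sqrt n\ge2/\varepsilon$, which contradicts $n/2\le e^{t\lambda}$ (the paper gets the needed strictness instead by counting only the $n(n-1)$ off-diagonal entries, which gives $\binom{r+t-1}{t}>n/2$).
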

\begin{proof}
For a symmetric matrix $\mat{A}$ with unit diagonal and off-diagonal entries of modulus at most $\theta$ the Cauchy--Schwarz inequality for the eigenvalues gives
\begin{equation}\label{eq:trace-rank}
\rank\mat{A}\ge\frac{(\operatorname{tr}\mat{A})^2}{\operatorname{tr}\mat{A}^2}\ge\frac{n}{1+(n-1)\theta^2}.
\end{equation}
Let $r=\rank\mat{M}$ and write $\mat{M}=\sum_{l\le r}\sigma_l\vect{v}_l\vect{v}_l^{\mathsf T}$. For an integer $t\ge1$ the entrywise power $\mat{M}^{\circ t}$ has entries $(\sum_l\sigma_lv_l(i)v_l(j))^t$, a sum of at most $\binom{r+t-1}{t}$ terms of the form $a\,m(\vect{v}(i))\,m(\vect{v}(j))$ with $m$ a monomial of degree $t$ in $r$ variables; so $\rank\mat{M}^{\circ t}\le\binom{r+t-1}{t}$. It is symmetric with unit diagonal and off-diagonal entries of modulus at most $\varepsilon^t$. Let $t=\lceil\ln n/(2\lambda)\rceil$, so that $\varepsilon^{2t}\le1/n$; by~\eqref{eq:trace-rank}
\[
\binom{r+t-1}{t}>\frac n2 .
\]
Let $m=r-1$; then $m\ge1$, because $\binom tt=1\le n/2$. Suppose $m\le\beta t$, where $\beta=\beta(\varepsilon)$. Then $m\le t$ and $\binom{m+t}{m}\le(e(m+t)/m)^m\le(2et/m)^m$. The function $m\mapsto m\ln(2et/m)$ increases on $(0,2t]$, hence
\[
\ln\frac n2<m\ln\frac{2et}{m}\le\beta t\ln\frac{2e}{\beta}\le\lambda t\le\frac{\ln n}2+\lambda ,
\]
that is, $n<4/\varepsilon^2$, a contradiction. Thus $r-1>\beta t\ge\beta\ln n/(2\lambda)$.
\end{proof}

For $1/\sqrt n\le\varepsilon<1/2$ the theorem of Alon~\cite[Theorem~2.1]{Alon2009} gives the stronger bound of order $\ln n/(\varepsilon^2\ln(1/\varepsilon))$, also for non-symmetric matrices.

\begin{proposition}\label{prop:log-lower}
Let $d$ be the uniform metric on $N$ points, $d(u,v)=1$ for $u\ne v$, and let $0<\varepsilon<1$, $N\ge4/\varepsilon^2$. Then $\pdim_\varepsilon(d)>c(\varepsilon)\ln N-2$.
\end{proposition}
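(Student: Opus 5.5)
The plan is to reduce the statement to Lemma~\ref{lem:perturbed} through the Gram matrix of a representation. Take an $\varepsilon$-representation of the uniform metric in $\R^{p,q}$ with $p+q=\pdim_\varepsilon(d)$, and let $\mat{A}\in\mathcal{A}_\varepsilon(d)$ be its matrix of squared intervals. Every off-diagonal entry of $\mat{A}$ lies in $[1-\varepsilon,1+\varepsilon]$. We need a symmetric matrix with unit diagonal and small off-diagonal entries whose rank is controlled by $p+q$.

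The natural candidate is $\mat{M}=\mat{J}-\mat{A}$. It is symmetric, its diagonal entries equal $1$, and its off-diagonal entries are $1-\mat{A}_{uv}\in[-\varepsilon,\varepsilon]$. By Lemma~\ref{lem:shift} with $T=1$, the matrix $\mat{A}-\mat{J}$ has rank at most $p+q+2$, so $\rank\mat{M}\le p+q+2$.

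The order is $n=N\ge4/\varepsilon^2$, so Lemma~\ref{lem:perturbed} applies directly and gives $\rank\mat{M}>1+c(\varepsilon)\ln N$. Hence $p+q+2>1+c(\varepsilon)\ln N$, that is, $\pdim_\varepsilon(d)>c(\varepsilon)\ln N-1$. This is slightly stronger than the claimed $c(\varepsilon)\ln N-2$, so no further work is needed. I would double-check the rank loss in Lemma~\ref{lem:shift}: the identity $\mat{A}-\mat{J}=(\vect{g}-\vect{1})\vect{1}^{\mathsf T}+\vect{1}\vect{g}^{\mathsf T}-2\mat{G}$ gives the loss of $2$. The bound holds for every signature, since Lemma~\ref{lem:shift} does not depend on the signature.

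The only possible obstacle is the case $\varepsilon=0$, which is excluded by hypothesis. Another is the requirement in Lemma~\ref{lem:perturbed} that $\varepsilon<1$, which holds. I therefore expect no real difficulty: all the substance is in Lemma~\ref{lem:perturbed}, and the proof is a two-line reduction.
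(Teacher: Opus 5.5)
Your proof is correct and follows the paper's route: form $\mat{M}=\mat{J}-\mat{A}$ and apply Lemma~\ref{lem:perturbed}. The only difference is in the rank bound. You bound $\rank(\mat{J}-\mat{A})\le p+q+2$ directly via Lemma~\ref{lem:shift} with $T=1$, whereas the paper uses $\rank\mat{A}\le s+2$ and loses one more for $\mat{J}$, so your version gives the slightly stronger bound $\pdim_\varepsilon(d)>c(\varepsilon)\ln N-1$.
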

\begin{proof}
Let $\mat{A}$ be the matrix of squared intervals of an $\varepsilon$-representation in dimension $s$. It is symmetric with zero diagonal, its off-diagonal entries lie in $[1-\varepsilon,1+\varepsilon]$, and $\rank\mat{A}\le s+2$, as in the proof of Proposition~\ref{prop:rank}. The matrix $\mat{M}=\mat{J}-\mat{A}$ satisfies the assumptions of Lemma~\ref{lem:perturbed} and has rank at most $s+3$.
\end{proof}

The argument uses only the rank of $\mat{A}$ and therefore applies to every signature. The uniform metric belongs to $\mathcal{M}_N(\Phi)$ for every $\Phi\ge1$, so Theorem~\ref{thm:log-upper} and Proposition~\ref{prop:log-lower} give $F_{\Phi,\varepsilon}(N)=\Theta_{\Phi,\varepsilon}(\log N)$ for $\varepsilon_\Phi<\varepsilon<1$.

\begin{proof}[Proof of Theorem~\ref{thm:main}]
The lower bounds of items~(1) and~(2) are items~(2) and~(3) of Corollary~\ref{cor:lower}; the metrics there have values in $\{1,\dots,5\}$ and belong to $\mathcal{M}_N(\Phi)$ for $\Phi\ge5$. The upper bounds are Proposition~\ref{prop:rank} and Corollary~\ref{cor:spread}. Item~(3) is Theorem~\ref{thm:log-upper} and Proposition~\ref{prop:log-lower}. The assumptions of the latter hold: $\varepsilon>\varepsilon_\Phi\ge12/13$ gives $4/\varepsilon^2<5\le N$ and $\varepsilon>1/(2e)$.
\end{proof}

\section{The parameter \texorpdfstring{$C_*$}{C*}}\label{sec:C}

Let $\mat{D}=(d(u,v)^2)$. An exact representation of $d$ consists of points $\vect{x}_u=(\vect{x}_u^+,\vect{x}_u^-)\in\R^p\oplus\R^q$ with $\|\vect{x}_u^+-\vect{x}_v^+\|^2-\|\vect{x}_u^--\vect{x}_v^-\|^2=\mat{D}_{uv}$. Put
\[
C(\vect{x})=\max_{u\ne v}\frac{\|\vect{x}_u^+-\vect{x}_v^+\|^2+\|\vect{x}_u^--\vect{x}_v^-\|^2}{\mat{D}_{uv}},\qquad C_*(d)=\inf_{\vect{x}}C(\vect{x}),
\]
the infimum being over all exact representations in all $\R^{p,q}$. The quantity $C(\vect{x})$ depends on the coordinates: a linear map preserving $\mathcal{Q}$ may change the Euclidean norm.

\begin{lemma}[{\cite[Lemma~2.2 and Theorem~2.3]{Deng2025}}, with a constant]\label{lem:deng}
If $d$ is a metric on $N$ points with an exact representation $\vect{x}$ and $0<\varepsilon<1$, then, with $\eta=\varepsilon/C(\vect{x})$,
\[
\pdim_\varepsilon(d)\le2\Bigl\lceil\frac{4\ln N}{\eta^2/2-\eta^3/3}\Bigr\rceil\le2\Bigl\lceil\frac{24\,C(\vect{x})^2\ln N}{\varepsilon^2}\Bigr\rceil .
\]
\end{lemma}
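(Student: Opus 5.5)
The plan is to apply the Johnson--Lindenstrauss lemma separately to the positive and the negative parts of the given exact representation, in the form of Lemma~\ref{lem:jl}, and to measure the error against the Euclidean quantity whose ratio to $\mat{D}_{uv}$ is bounded by $C(\vect{x})$. Write $\vect{x}_u=(\vect{x}_u^+,\vect{x}_u^-)\in\R^p\oplus\R^q$ and $C=C(\vect{x})$. Put $\eta=\varepsilon/C$. Since every pair satisfies $\|\vect{x}_u^+-\vect{x}_v^+\|^2+\|\vect{x}_u^--\vect{x}_v^-\|^2\ge\mat{D}_{uv}>0$, we have $C\ge1$, so $0<\eta<1$. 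Let $k=\lceil 4\ln N/(\eta^2/2-\eta^3/3)\rceil$.

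First apply Lemma~\ref{lem:jl} to the $M=N$ points $\vect{x}_u^+\in\R^p$ to get a linear map $\varphi^+:\R^p\to\R^k$. It preserves all squared distances $\|\vect{x}_u^+-\vect{x}_v^+\|^2$ up to the factor $1\pm\eta$. Do the same with the points $\vect{x}_u^-\in\R^q$ to obtain $\varphi^-:\R^q\to\R^k$. If $p\le k$ or $q\le k$, Lemma~\ref{lem:jl} still applies, via padding with zeros as noted after it. Take the new points $\vect{z}_u=(\varphi^+\vect{x}_u^+,\varphi^-\vect{x}_u^-)\in\R^{k,k}$.

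For $u\ne v$ the difference between $\mathcal{Q}(\vect{z}_u-\vect{z}_v)$ and $\mat{D}_{uv}=\|\vect{x}_u^+-\vect{x}_v^+\|^2-\|\vect{x}_u^--\vect{x}_v^-\|^2$ is at most
\[
\eta\bigl(\|\vect{x}_u^+-\vect{x}_v^+\|^2+\|\vect{x}_u^--\vect{x}_v^-\|^2\bigr)\le\eta\,C\,\mat{D}_{uv}=\varepsilon\,\mat{D}_{uv}.
\]
This is exactly condition~\eqref{eq:rep}, so $\pdim_\varepsilon(d)\le2k$, which is the first inequality.

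The second inequality uses $\eta^2/2-\eta^3/3\ge\eta^2/6$, noted after Lemma~\ref{lem:jl}. It gives $4\ln N/(\eta^2/2-\eta^3/3)\le24\ln N/\eta^2=24C^2\ln N/\varepsilon^2$, and the ceiling is monotone.

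No step is a real obstacle. The only points to check are that $\eta<1$, which is why $C\ge1$ matters, and that the count $M=N$ suffices. It does, because only pairwise differences of the points are needed, not inner products. This is also why the bound has $\ln N$ rather than the $\ln(N^2+1)$ of Corollary~\ref{cor:inner}.
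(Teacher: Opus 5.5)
Your proposal is correct and follows the paper's proof. You apply Lemma~\ref{lem:jl} with $\eta=\varepsilon/C(\vect{x})$ separately to the points $\vect{x}_u^+$ and $\vect{x}_u^-$, use $C(\vect{x})\ge1$ to get $\eta<1$, and bound the change of each squared interval by $\eta$ times the Euclidean numerator of $C(\vect{x})$; the only cosmetic difference is that the paper leaves a part of dimension at most $\lceil4\ln N/(\eta^2/2-\eta^3/3)\rceil$ unchanged instead of padding it with zeros, which gives the same bound.
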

\begin{proof}
Apply Lemma~\ref{lem:jl} with $\eta=\varepsilon/C(\vect{x})$ separately to the points $\vect{x}_u^+$ and to the points $\vect{x}_u^-$; a part whose dimension is already at most $\lceil4\ln N/(\eta^2/2-\eta^3/3)\rceil$ is left unchanged. Here $0<\eta<1$ because $C(\vect{x})\ge1$. The squared interval changes by at most $\eta(\|\vect{x}_u^+-\vect{x}_v^+\|^2+\|\vect{x}_u^--\vect{x}_v^-\|^2)\le\varepsilon\mat{D}_{uv}$.
\end{proof}

Let $\mat{G}_c=-\frac12\mat{P}\mat{D}\mat{P}$, and let $\|\mat{M}\|_{\mathrm{tr}}$ denote the sum of the absolute values of the eigenvalues of a symmetric matrix $\mat{M}$.

\begin{proposition}\label{prop:Cstar}
For every metric $d$ on $N$ points the infimum $C_*(d)$ is attained, and
\[
C_*(d)=1+2\min\Bigl\{\max_{u\ne v}\frac{\mathcal{K}(\mat{Z})_{uv}}{\mat{D}_{uv}}:\ \mat{Z}\succeq0,\ \mat{Z}+\mat{G}_c\succeq0,\ \mat{Z}\vect{1}=\vect{0}\Bigr\}\ \ge\ \frac{\|\mat{G}_c\|_{\mathrm{tr}}}{\operatorname{tr}\mat{G}_c}.
\]
\end{proposition}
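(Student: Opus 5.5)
Every exact representation is reduced to a pair of Gram matrices. Given $\vect{x}_u=(\vect{x}_u^+,\vect{x}_u^-)$, translate so that $\sum_u\vect{x}_u=\vect{0}$; this changes neither the intervals nor $C(\vect{x})$. Let $\mat{G}^\pm$ be the Euclidean Gram matrices of the centred parts. Then $\mat{G}^\pm\succeq0$ and $\mat{G}^\pm\vect{1}=\vect{0}$. The condition of exact representation reads $\mathcal{K}(\mat{G}^+-\mat{G}^-)=\mat{D}$. Since $\mat{G}^+-\mat{G}^-$ is centred and $\mathcal{K}$ is injective on centred matrices (Section~\ref{sec:prelim}), this forces $\mat{G}^+-\mat{G}^-=\mat{G}_c$. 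Put $\mat{Z}=\mat{G}^-$. Then $\mat{Z}\succeq0$, $\mat{Z}+\mat{G}_c=\mat{G}^+\succeq0$ and $\mat{Z}\vect{1}=\vect{0}$. The numerator of $C$ is $\mathcal{K}(\mat{G}^++\mat{G}^-)_{uv}=\mat{D}_{uv}+2\mathcal{K}(\mat{Z})_{uv}$, so $C(\vect{x})=1+2\max_{u\ne v}\mathcal{K}(\mat{Z})_{uv}/\mat{D}_{uv}$.

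Conversely, every feasible $\mat{Z}$ gives a representation. Factor $\mat{Z}+\mat{G}_c$ and $\mat{Z}$ as Gram matrices of vectors $\vect{x}_u^+\in\R^p$ and $\vect{x}_u^-\in\R^q$; the resulting points represent $d$ exactly and have the same value of $C$. Hence $C_*$ equals $1+2$ times the infimum over the feasible set. To show that the infimum is attained, I would restrict to the sublevel set of the objective at one feasible point, for instance $\mat{Z}$ equal to the negative part of $\mat{G}_c$. The objective $\max_{u\ne v}\mathcal{K}(\mat{Z})_{uv}/\mat{D}_{uv}$ is continuous and the sublevel set is closed. It is also bounded: for $\mat{Z}\succeq0$ with $\mat{Z}\vect{1}=\vect{0}$ we have $\operatorname{tr}\mat{Z}=\frac1{2N}\sum_{u,v}\mathcal{K}(\mat{Z})_{uv}$, so a bound on all entries of $\mathcal{K}(\mat{Z})$ bounds the trace, and hence bounds $\mat{Z}$ itself. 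A continuous function on a compact set attains its minimum.

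For the lower bound, let $\mu$ be the optimal value, so that $\mathcal{K}(\mat{Z})_{uv}\le\mu\mat{D}_{uv}$ for all $u\ne v$. Sum over all pairs and use the trace identity above for $\mat{Z}$ and for $\mat{G}_c$; the latter gives $\sum_{u,v}\mat{D}_{uv}=2N\operatorname{tr}\mat{G}_c$. This yields $\operatorname{tr}\mat{Z}\le\mu\operatorname{tr}\mat{G}_c$. Next, $\mat{G}_c=\mat{G}^+-\mat{Z}$ is a difference of two positive semidefinite matrices, so $\|\mat{G}_c\|_{\mathrm{tr}}\le\operatorname{tr}\mat{G}^++\operatorname{tr}\mat{Z}=\operatorname{tr}\mat{G}_c+2\operatorname{tr}\mat{Z}$. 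To check this, pair $\mat{G}_c$ with the sign matrix of its eigenvalues, which has spectral norm $1$. Combining the two inequalities gives $\|\mat{G}_c\|_{\mathrm{tr}}\le(1+2\mu)\operatorname{tr}\mat{G}_c=C_*\operatorname{tr}\mat{G}_c$. Here $\operatorname{tr}\mat{G}_c=\frac1{2N}\sum\mat{D}_{uv}>0$.

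The main obstacle is the compactness step. There I must make sure that the translation and the restriction to centred $\mat{Z}$ lose nothing, and that the objective controls $\mat{Z}$ entirely through its trace.
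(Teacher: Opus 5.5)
Your proof is correct and follows the paper's argument. You centre the two Gram matrices to identify their difference with $\mat{G}_c$, get attainment from the trace identity $\sum_{u,v}\mathcal{K}(\mat{Z})_{uv}=2N\operatorname{tr}\mat{Z}$ for centred $\mat{Z}$, and obtain the lower bound by summing $\mathcal{K}(\mat{Z})_{uv}\le\mu\mat{D}_{uv}$ over all pairs. The only difference is cosmetic. You derive $\|\mat{G}_c\|_{\mathrm{tr}}\le\operatorname{tr}\mat{G}_c+2\operatorname{tr}\mat{Z}$ by pairing with the sign matrix of $\mat{G}_c$, while the paper bounds $\operatorname{tr}\mat{Z}$ from below by the sum of the absolute values of the negative eigenvalues of $\mat{G}_c$, compressing onto the negative eigenspace; the two inequalities are equivalent.
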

\begin{proof}
Let $\mat{Y}_0,\mat{Z}_0$ be the Gram matrices of the points $\vect{x}_u^+$ and $\vect{x}_u^-$. They are positive semidefinite, $\mathcal{K}(\mat{Y}_0)-\mathcal{K}(\mat{Z}_0)=\mat{D}$, and the numerator in $C(\vect{x})$ equals $\mat{D}_{uv}+2\mathcal{K}(\mat{Z}_0)_{uv}$. Replacing them by $\mat{P}\mat{Y}_0\mat{P}$ and $\mat{Z}=\mat{P}\mat{Z}_0\mat{P}$ does not change $\mathcal{K}$. The difference of the new matrices is centred and has $\mathcal{K}$-image $\mat{D}$, so it equals $\mat{G}_c$. Thus $\mat{Z}+\mat{G}_c\succeq0$. Conversely, every admissible $\mat{Z}$ defines a representation. For a centred matrix $\mat{M}$ we have $\sum_{u<v}\mathcal{K}(\mat{M})_{uv}=N\operatorname{tr}\mat{M}$; therefore the admissible $\mat{Z}$ with bounded objective have bounded trace, and the minimum is attained.

For the inequality let $\tau^+$ and $\tau^-$ be the sums of the absolute values of the positive and of the negative eigenvalues of $\mat{G}_c$, and let $\mat{\Pi}$ be the orthogonal projector onto the negative eigenspace. From $\mat{Z}\succeq-\mat{G}_c$ and $\mat{Z}\succeq0$ we get $\operatorname{tr}\mat{Z}\ge\operatorname{tr}(\mat{\Pi}\mat{Z}\mat{\Pi})\ge\tau^-$. Since a maximum of ratios is at least the ratio of the sums,
\[
\max_{u\ne v}\frac{\mathcal{K}(\mat{Z})_{uv}}{\mat{D}_{uv}}\ \ge\ \frac{\sum_{u<v}\mathcal{K}(\mat{Z})_{uv}}{\sum_{u<v}\mathcal{K}(\mat{G}_c)_{uv}}=\frac{\operatorname{tr}\mat{Z}}{\operatorname{tr}\mat{G}_c}\ \ge\ \frac{\tau^-}{\tau^+-\tau^-},
\]
and $1+2\tau^-/(\tau^+-\tau^-)=(\tau^++\tau^-)/(\tau^+-\tau^-)$.
\end{proof}

In particular, $C_*(d)=1$ exactly for Euclidean metrics. For a symmetric matrix $\mat{B}$ let $|\mat{B}|=(\mat{B}^2)^{1/2}$.

\begin{lemma}\label{lem:rows}
Let $\mat{B}$ be a symmetric matrix with $\mathcal{K}(\mat{B})=\mat{D}$. The representation obtained from the spectral decomposition of $\mat{B}$ satisfies
\[
C(\vect{x})\le\max_{u\ne v}\frac{2\bigl(\|\mat{B}\vect{e}_u\|+\|\mat{B}\vect{e}_v\|\bigr)}{\mat{D}_{uv}}.
\]
\end{lemma}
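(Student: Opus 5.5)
The plan is to write down the representation explicitly and to bound the Euclidean numerator by splitting $\mat{B}$ into its positive and negative parts. Let $\mat{B}=\mat{V}\mat{\Lambda}\mat{V}^{\mathsf T}$ be a spectral decomposition. The points are the rows of $\mat{V}|\mat{\Lambda}|^{1/2}$. The coordinates with positive eigenvalues form $\vect{x}_u^+$ and those with negative eigenvalues form $\vect{x}_u^-$. Then the Gram matrices of the two parts are $\mat{B}^+$ and $\mat{B}^-$, the positive and negative parts of $\mat{B}$, with $\mat{B}=\mat{B}^+-\mat{B}^-$ and $|\mat{B}|=\mat{B}^++\mat{B}^-$. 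Since $\mathcal{K}(\mat{B})=\mat{D}$, this is an exact representation. The Euclidean numerator in $C(\vect{x})$ is
\[
\|\vect{x}_u^+-\vect{x}_v^+\|^2+\|\vect{x}_u^--\vect{x}_v^-\|^2=\mathcal{K}(|\mat{B}|)_{uv}=|\mat{B}|_{uu}+|\mat{B}|_{vv}-2|\mat{B}|_{uv}.
\]

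The key step is to bound the entries of $|\mat{B}|$ by the column norms of $\mat{B}$. The matrix $|\mat{B}|$ is positive semidefinite, so $|\mat{B}|_{uv}\ge-\sqrt{|\mat{B}|_{uu}|\mat{B}|_{vv}}$. This gives
\[
\mathcal{K}(|\mat{B}|)_{uv}\le\bigl(\sqrt{|\mat{B}|_{uu}}+\sqrt{|\mat{B}|_{vv}}\bigr)^2\le2\bigl(|\mat{B}|_{uu}+|\mat{B}|_{vv}\bigr).
\]
For the diagonal entries we write $|\mat{B}|_{uu}=\langle|\mat{B}|\vect{e}_u,\vect{e}_u\rangle\le\||\mat{B}|\vect{e}_u\|$, using Cauchy--Schwarz and $\|\vect{e}_u\|=1$. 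Since $|\mat{B}|^2=\mat{B}^2$, we also have $\||\mat{B}|\vect{e}_u\|^2=\langle\mat{B}^2\vect{e}_u,\vect{e}_u\rangle=\|\mat{B}\vect{e}_u\|^2$. Together these give $\mathcal{K}(|\mat{B}|)_{uv}\le2(\|\mat{B}\vect{e}_u\|+\|\mat{B}\vect{e}_v\|)$.

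Dividing by $\mat{D}_{uv}$ and taking the maximum over $u\ne v$ then yields the claim. The only points needing care are the identification of the numerator with $\mathcal{K}(|\mat{B}|)$ and the equality $\||\mat{B}|\vect{e}_u\|=\|\mat{B}\vect{e}_u\|$. The first is a direct computation from the spectral coordinates; the second follows from $|\mat{B}|^2=\mathbf{B}^2$, as shown above. I do not expect a genuine obstacle. One could sharpen the bound slightly by using $\mathcal{K}(|\mat{B}|)_{uv}\le(\sqrt{a}+\sqrt{b})^2$ directly, but the stated constant $2$ is enough for the lemma.
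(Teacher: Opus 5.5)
Your proof is correct and matches the paper's argument step for step. You identify the Euclidean squared distances with $\mathcal{K}(|\mat{B}|)$, use positive semidefiniteness of $|\mat{B}|$ to get $\mathcal{K}(|\mat{B}|)_{uv}\le 2(|\mat{B}|_{uu}+|\mat{B}|_{vv})$, and bound $|\mat{B}|_{uu}\le\||\mat{B}|\vect{e}_u\|=\|\mat{B}\vect{e}_u\|$ by Cauchy--Schwarz.
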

\begin{proof}
The Euclidean squared distances of this representation are $\mathcal{K}(|\mat{B}|)_{uv}$. Since $|\mat{B}|\succeq0$, we have $\mathcal{K}(|\mat{B}|)_{uv}\le(\sqrt{|\mat{B}|_{uu}}+\sqrt{|\mat{B}|_{vv}})^2\le2(|\mat{B}|_{uu}+|\mat{B}|_{vv})$, and by the Cauchy--Schwarz inequality $|\mat{B}|_{uu}=\langle\vect{e}_u,|\mat{B}|\vect{e}_u\rangle\le\||\mat{B}|\vect{e}_u\|=\|\mat{B}\vect{e}_u\|$.
\end{proof}

\begin{corollary}\label{cor:graph}
Let $H$ be a graph on $N$ vertices with maximum degree $\Delta$ and adjacency matrix $\mat{A}_H$.
\begin{enumerate}
\item The metric $d$ equal to $1$ on the edges of $H$ and to $2$ on the other pairs has $C_*(d)\le2\sqrt{16+9\Delta}$.
\item For $4/5\le\varepsilon<1$ the metric $d_H=\min\{5,\operatorname{dist}_H\}$ satisfies $\pdim_\varepsilon(d_H)=O(\Delta\log N)$ with an absolute constant.
\end{enumerate}
\end{corollary}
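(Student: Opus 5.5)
For item~(1) the plan is to apply Lemma~\ref{lem:rows} to an explicit symmetric matrix $\mat{B}$ with $\mathcal{K}(\mat{B})=\mat{D}$. For item~(2) we use the same construction, with a different value on the non-edges, in place of the squared metric, and then apply Lemma~\ref{lem:deng}.

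\emph{Item (1).} Here $\mat{D}=4(\mat{J}-\mat{I})-3\mat{A}_H$. I would take $\mat{B}=2\mat{I}+\frac32\mat{A}_H$. Since $\mat{A}_H$ has zero diagonal, $\mathcal{K}(\mat{B})_{uv}=2+2-3(\mat{A}_H)_{uv}=\mat{D}_{uv}$ for $u\ne v$. Column $u$ of $\mat{B}$ has the entry $2$ and at most $\Delta$ entries $\frac32$, so $\|\mat{B}\vect{e}_u\|\le\sqrt{4+9\Delta/4}$. Every $\mat{D}_{uv}\ge1$, so Lemma~\ref{lem:rows} gives
\[
C(\vect{x})\le4\sqrt{4+9\Delta/4}=2\sqrt{16+9\Delta},
\]
and $C_*(d)\le C(\vect{x})$.

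\emph{Item (2).} It suffices to take $\varepsilon=4/5$, since $\pdim_\varepsilon$ is nonincreasing in $\varepsilon$. Replace $\mat{D}=(d_H^2)$ by the two-valued matrix $\mat{D}'$ that equals $1$ on the edges of $H$ and $6$ on the other pairs. On edges $d_H=1$, and the admissible interval for $\varepsilon=4/5$ is $[1/5,9/5]$. On non-edges $d_H\in\{2,\dots,5\}$, so the admissible values are those in $[(1/5)\cdot25,(9/5)\cdot4]=[5,36/5]$. Both intervals contain a neighbourhood of the corresponding value of $\mat{D}'$: every matrix within relative error $\varepsilon'=1/6$ of $\mat{D}'$ lies in $\mathcal{A}_{4/5}(d_H)$. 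Hence $\pdim_{4/5}(d_H)\le\pdim_{1/6}(\mat{D}')$.

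Next, $\mat{D}'$ has the exact representation from $\mat{B}'=3\mat{I}+\frac52\mat{A}_H$, for which $\mathcal{K}(\mat{B}')=\mat{D}'$. Its column norms are at most $\sqrt{9+25\Delta/4}$, so Lemma~\ref{lem:rows} gives $C(\vect{x})\le2\sqrt{36+25\Delta}=O(\sqrt\Delta)$. The proof of Lemma~\ref{lem:deng} never uses the triangle inequality: it only perturbs each squared interval by at most $\eta$ times the Euclidean squared distance. It therefore applies to $\mat{D}'$ with $\varepsilon'=1/6$ and gives dimension $O(C(\vect{x})^2\log N)=O(\Delta\log N)$ with an absolute constant.

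\emph{Main obstacle.} The step needing care is the choice of the replacement value on non-edges. It must lie strictly inside $[5,36/5]$ for every possible value of $d_H$ there, including $d_H=2$ and $d_H=5$, while the edge value $1$ lies inside $[1/5,9/5]$, and we need a fixed relative slack for the projection. The value $6$ with slack $1/6$ achieves this. Without replacing $d_H^2$ by a two-valued matrix, the matrix $\mat{B}$ built from $\operatorname{dist}_H$ would have rows of norm far larger than $\sqrt\Delta$, and the bound would be lost.
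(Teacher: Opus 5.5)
Your proposal is correct and follows the paper's proof essentially step for step. It uses the same matrices $2\mat{I}+\frac32\mat{A}_H$ and $3\mat{I}+\frac52\mat{A}_H$, Lemma~\ref{lem:rows}, and a projection as in Lemma~\ref{lem:deng} with relative slack $1/6$ around the values $1$ and $6$, which lands in $[5/6,7/6]$ on edges and $[5,7]$ on non-edges. The only difference is one of phrasing: you reduce item~(2) to $\varepsilon=4/5$ by monotonicity and use the inclusion $\mathcal{A}_{1/6}(\mat{D}')\subseteq\mathcal{A}_{4/5}(d_H)$, while the paper checks the containment directly for every $\varepsilon\ge4/5$.
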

\begin{proof}
(1) For $\mat{B}=2\mat{I}+\frac32\mat{A}_H$ we have $\mathcal{K}(\mat{B})_{uv}=4-3(\mat{A}_H)_{uv}$ and $\|\mat{B}\vect{e}_u\|^2=4+\frac94\deg u$; apply Lemma~\ref{lem:rows}.

(2) Let $\mat{B}=3\mat{I}+\frac52\mat{A}_H$, so that $\mathcal{K}(\mat{B})$ equals $1$ on the edges and $6$ on the other pairs. By Lemma~\ref{lem:rows}, applied with $\mathcal{K}(\mat{B})$ in place of $\mat{D}$, its spectral representation $\vect{x}$ has $C(\vect{x})\le2\sqrt{36+25\Delta}$, the ratio being taken with respect to $\mathcal{K}(\mat{B})$. As in Lemma~\ref{lem:deng}, projecting with $\eta=1/(6C(\vect{x}))$ to dimension $2\lceil24\ln N/\eta^2\rceil\le2\lceil3456(36+25\Delta)\ln N\rceil$, where $3456=24\cdot6^2\cdot4$, gives points whose squared intervals lie in $[5/6,7/6]$ on the edges and in $[5,7]$ on the other pairs. For $\varepsilon\ge4/5$ the first interval is contained in $[1-\varepsilon,1+\varepsilon]$, and the second in $[25(1-\varepsilon),4(1+\varepsilon)]$, which is admissible for every distance between $2$ and $5$.
\end{proof}

Item~(2) shows that for $\ell=5$ and $4/5\le\varepsilon<12/13$ the lower bound of Theorem~\ref{thm:girth} is sharp on its own family up to a logarithmic factor: for a $\Delta$-regular graph $G$ of girth at least $6$ some subgraph requires dimension more than $\Delta/16$, and every subgraph admits dimension $O(\Delta\log N)$. Here the value $\varepsilon=4/5$ is included.

\begin{theorem}\label{thm:Cfamily}
For every $N\ge1000$ there is a graph $H$ on $N$ vertices such that the metric $d$ equal to $1$ on the edges of $H$ and to $2$ on the other pairs satisfies
\[
\frac{\sqrt N}{16}\le C_*(d)\le7\sqrt N\qquad\text{and}\qquad\pdim_\varepsilon(d)>\frac{N-1}{16}\quad\text{for every }\varepsilon\in(0,3/5).
\]
\end{theorem}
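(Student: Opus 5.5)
The plan is to take $H$ to be a random graph $G(N,1/2)$. I will show that with positive probability it satisfies both the dimension lower bound and the lower bound on $C_*$. The upper bound on $C_*$ will hold for every graph.

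\emph{Upper bound on $C_*$.} This holds for every graph $H$ by Corollary~\ref{cor:graph}(1). Since $\Delta\le N-1$, we get $C_*(d)\le2\sqrt{16+9(N-1)}\le2\sqrt{9N+7}\le7\sqrt N$.

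\emph{Dimension lower bound.} Apply Theorem~\ref{thm:girth} with $G=K_N$, $\ell=2$, $m=N(N-1)/2$ and $s=\lfloor m/(8N)\rfloor$; here $\varepsilon_2=3/5$. For each $\varepsilon<3/5$, at most $2^{\alpha m}$ of the $2^m$ graphs have $\pdim_\varepsilon(d_H)\le s$. The threshold $T$ in that proof depends on $\varepsilon$, so the bad sets depend on $\varepsilon$. This is harmless, because $\pdim_\varepsilon$ is nonincreasing in $\varepsilon$. Hence I will use a sequence $\varepsilon_j\uparrow3/5$, and it suffices that $H$ avoids the countably many bad sets in the limit. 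Alternatively, I will observe that the sign-pattern count in the proof does not really use the value of $T$ except through the set of realizable patterns. The cleaner route is this: if $\pdim_\varepsilon(d_H)\le s$ for some $\varepsilon<3/5$, then the same holds for every $\varepsilon'\in[\varepsilon,3/5)$. So the union over $\varepsilon$ of the bad sets is an increasing union of finite families, each of size at most $2^{\alpha m}$, and a finite increasing union is one of its members. Thus the uniform random graph is bad with probability at most $2^{-(1-\alpha)m}$, which is astronomically small for $N\ge1000$. Finally, $\lfloor m/(8N)\rfloor+1>(N-1)/16$.

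\emph{Lower bound on $C_*$.} Use Proposition~\ref{prop:Cstar}, which gives $C_*\ge\|\mat{G}_c\|_{\mathrm{tr}}/\operatorname{tr}\mat{G}_c$. We have $\mat{D}=4(\mat{J}-\mat{I})-3\mat{A}_H$. Then $\mat{G}_c=2\mat{P}+\frac32\mat{P}\mat{A}_H\mat{P}$, so $\operatorname{tr}\mat{G}_c=2(N-1)-3m_H/N\le2N$. Write $\mat{A}_H=\frac12(\mat{J}-\mat{I})+\frac12\mat{R}$, where $\mat{R}$ is a symmetric random sign matrix with zero diagonal. Then $\mat{G}_c=\frac54\mat{P}+\frac34\mat{P}\mat{R}\mat{P}$. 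By the triangle inequality,
\[
\|\mat{G}_c\|_{\mathrm{tr}}\ge\tfrac34\|\mat{P}\mat{R}\mat{P}\|_{\mathrm{tr}}-\tfrac54N .
\]
Since $\mat{P}\mat{R}\mat{P}$ differs from $\mat{R}$ by a matrix of rank at most $2$ and spectral norm at most $2\|\mat{R}\|$, we get $\|\mat{P}\mat{R}\mat{P}\|_{\mathrm{tr}}\ge\|\mat{R}\|_{\mathrm{tr}}-4\|\mat{R}\|$. Duality gives
\[
\|\mat{R}\|_{\mathrm{tr}}\ge\|\mat{R}\|_F^2/\|\mat{R}\|=N(N-1)/\|\mat{R}\| .
\]
So if $\|\mat{R}\|\le\kappa\sqrt N$, then
\[
C_*\ge\frac{\frac34 N(N-1)/(\kappa\sqrt N)-3\kappa\sqrt N-\frac54N}{2N}\approx\frac{3}{8\kappa}\sqrt N-O(1).
\]
With $\kappa=4$ this is $\frac3{32}\sqrt N-O(1)\ge\sqrt N/16$ once $\sqrt N$ exceeds a modest constant. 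The margin $\frac{3}{32}-\frac1{16}=\frac1{32}$ must absorb the roughly $\frac58+O(1/\sqrt N)$ error term, which requires $\sqrt N\gtrsim 20$–$30$. This is consistent with $N\ge1000$ and should be checked precisely.

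\emph{Main obstacle.} The hard step is an explicit, nonasymptotic bound $\Pr[\|\mat{R}\|>4\sqrt N]<1-2^{-(1-\alpha)m}$, which in practice means less than $1/2$, valid already at $N=1000$. Füredi--Komlós gives $2\sqrt N(1+o(1))$ only asymptotically. I would first try the standard $\frac12$-net argument on the sphere with Hoeffding's inequality for the quadratic form $\vect{x}^{\mathsf T}\mat{R}\vect{x}=2\sum_{i<j}R_{ij}x_ix_j$. This gives $\Pr[\|\mat{R}\|>t]\le 5^{2N}\cdot2e^{-t^2/(16\cdot\ldots)}$ style bounds. These may require $\kappa$ larger than $4$, in which case I will sharpen the net (for example an $\epsilon$-net with $\epsilon$ small and the bound $\|\mat{R}\|\le(1-2\epsilon)^{-1}\max_{\text{net}}|\vect{x}^{\mathsf T}\mat{R}\vect{x}|$), or use the moment method $\mathbb E\operatorname{tr}\mat{R}^{2k}$ with explicit Catalan-type counts. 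The constants $1/16$ and $1000$ suggest that the authors' tail bound is of this crude explicit kind.
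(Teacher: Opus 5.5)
Your upper bound on $C_*$ and your handling of the dimension bound are correct and match the paper: the bad sets increase with $\varepsilon$, so their union over $\varepsilon<3/5$ has at most $2^{\alpha m}$ elements. The gap is the step you flag yourself. Nothing in the proposal proves $\|\mat R\|\le\kappa\sqrt N$ with positive probability for a $\kappa$ your chain can afford, and the tools you name do not obviously give it.

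At $N=1000$ your inequality
\[
C_*\ge\frac{3(N-1)}{8\kappa\sqrt N}-\frac{3\kappa}{2\sqrt N}-\frac{5(N-1)}{8N}
\]
exceeds $\sqrt N/16$ only for $\kappa\lesssim4.22$. The straightforward net argument, with $\Pr[|\vect x^{\mathsf T}\mat R\vect x|>t]\le2e^{-t^2/4}$ and an $\epsilon$-net of size $(1+2/\epsilon)^N$, needs $(1-2\epsilon)^2\kappa^2>4\ln(1+2/\epsilon)$. This fails for every $\epsilon\in(0,1/2)$ when $\kappa\le4.27$. Moment bounds $\|\mat R\|^{2k}\le\operatorname{tr}\mat R^{2k}$ with fixed $k$ give only $\Pr[\|\mat R\|>\kappa\sqrt N]\lesssim C_kN/\kappa^{2k}$, which grows with $N$. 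You would therefore need $k\asymp\log N$ with explicit walk counts, which you do not supply. As written, the lower bound $C_*\ge\sqrt N/16$ is not established.

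The missing idea is that no spectral-norm bound is needed. Hölder's inequality on the eigenvalues, $\sum\lambda_i^2\le(\sum|\lambda_i|)^{2/3}(\sum\lambda_i^4)^{1/3}$, gives
\[
\|\mat M\|_{\mathrm{tr}}\ge\|\mat M\|_F^3/(\operatorname{tr}\mat M^4)^{1/2}.
\]
So only the fourth moment enters, and $\mathbb{E}\operatorname{tr}\mat S^4=N(N-1)(2N-3)<2N^3$ is an elementary count of closed walks of length $4$. Markov's inequality then gives $\operatorname{tr}\mat S^4\le16N^3$ with probability at least $7/8$. The effective norm $(\operatorname{tr}\mat M^4)^{1/2}/\|\mat M\|_F$ is then about $4\sqrt N$, which is exactly your $\kappa=4$, obtained from Markov alone.

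The paper applies this directly to $\mat M=\mat P\mat S\mat P$ (with $\mat S=-\mat R$), which also avoids the $4\|\mat R\|$ loss from centring. It uses $\operatorname{tr}\mat M^4\le\operatorname{tr}\mat S^4$ and the identity
\[
\|\mat M\|_F^2=\|\mat S\|_F^2-\tfrac2N\|\mat S\vect 1\|^2+\tfrac1{N^2}(\vect 1^{\mathsf T}\mat S\vect 1)^2\ge N(N-1)-16N,
\]
valid on the event $\|\mat S\vect 1\|^2\le8N^2$, which Markov again gives with probability at least $7/8$. This yields $\|\mat M\|_{\mathrm{tr}}\ge\frac14N^{3/2}(1-17/N)^{3/2}$, and hence $C_*\ge\frac3{32}\sqrt N(1-17/N)^{3/2}-\frac58\ge\sqrt N/16$ for $N\ge1000$.

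Constant-probability events suffice because the dimension event fails only with probability $2^{-(1-\alpha)m}$. This is why crude Markov bounds are enough and no tail bound for $\|\mat R\|$ is required.
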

\begin{proof}
Choose $H$ uniformly among the $2^m$ graphs on $N$ vertices, $m=N(N-1)/2$. Then $\mat{D}=\frac52(\mat{J}-\mat{I})+\frac32\mat{S}$, where $\mat{S}$ is symmetric with zero diagonal and independent uniform signs above the diagonal. Consider three events.

$\mathcal{E}_1$: $\|\mat{S}\vect{1}\|^2\le8N^2$. Since $\mathbb{E}\|\mat{S}\vect{1}\|^2=N(N-1)$, Markov's inequality gives probability at least $7/8$.

$\mathcal{E}_2$: $\operatorname{tr}\mat{S}^4\le16N^3$. In $\operatorname{tr}\mat{S}^4=\sum\mat{S}_{uv}\mat{S}_{vw}\mat{S}_{wz}\mat{S}_{zu}$ the terms with non-zero expectation are those with $w=u$ or $z=v$; there are $2N(N-1)^2-N(N-1)$ of them, so $\mathbb{E}\operatorname{tr}\mat{S}^4=N(N-1)(2N-3)<2N^3$ and the probability is at least $7/8$.

$\mathcal{E}_3$: $\pdim_\varepsilon(d)>(N-1)/16$ for all $\varepsilon<3/5$. By Theorem~\ref{thm:girth} with $G=K_N$, $\ell=2$ and $s=\lfloor(N-1)/16\rfloor$, for each $\varepsilon<3/5$ at most $2^{\alpha m}$ graphs have $\pdim_\varepsilon\le s$. These sets of graphs increase with $\varepsilon$, so their union over $\varepsilon<3/5$ also has at most $2^{\alpha m}$ elements, and $\mathcal{E}_3$ fails with probability at most $2^{-(1-\alpha)m}$. Here $1-\alpha=\frac1{16}\log_2(8/e^2)$, and $8/e^2>21/20$ because $e^2<7.6$; since $\log_2(1+x)\ge\ln(1+x)\ge x/(1+x)$, we get $1-\alpha>1/336$. As $m\ge499\,500$, the probability is less than $1/4$.

Fix $H$ in the intersection of the three events. We have $\mat{G}_c=\frac54\mat{P}-\frac34\mat{M}$ with $\mat{M}=\mat{P}\mat{S}\mat{P}$. H\"older's inequality for the eigenvalues gives $\|\mat{M}\|_F^2\le\|\mat{M}\|_{\mathrm{tr}}^{2/3}(\operatorname{tr}\mat{M}^4)^{1/3}$, and $\operatorname{tr}\mat{M}^4\le\operatorname{tr}\mat{S}^4\le16N^3$, because the singular values of the compression $\mat{P}\mat{S}\mat{P}$ do not exceed those of $\mat{S}$. Moreover,
\[
\|\mat{M}\|_F^2=\|\mat{S}\|_F^2-\tfrac2N\|\mat{S}\vect{1}\|^2+\tfrac1{N^2}(\vect{1}^{\mathsf T}\mat{S}\vect{1})^2\ \ge\ N(N-1)-16N .
\]
Hence $\|\mat{M}\|_{\mathrm{tr}}\ge\|\mat{M}\|_F^3/(\operatorname{tr}\mat{M}^4)^{1/2}\ge\frac14N^{3/2}(1-17/N)^{3/2}$ and $\|\mat{G}_c\|_{\mathrm{tr}}\ge\frac34\|\mat{M}\|_{\mathrm{tr}}-\frac54(N-1)$. Since $\operatorname{tr}\mat{G}_c=\frac1N\sum_{u<v}\mat{D}_{uv}<2N$, Proposition~\ref{prop:Cstar} yields
\[
C_*(d)\ \ge\ \frac3{32}\sqrt N\Bigl(1-\frac{17}N\Bigr)^{3/2}-\frac58\ \ge\ \frac{\sqrt N}{16}\qquad(N\ge1000).
\]
The upper bound is Corollary~\ref{cor:graph}(1) with $\Delta\le N-1$: $C_*(d)\le2\sqrt{9N+7}\le7\sqrt N$.
\end{proof}

Since $(N-1)/16\ge N/16-1$ and $N\ge C_*^2/7^2$, the metrics of Theorem~\ref{thm:Cfamily} satisfy, together with Lemma~\ref{lem:deng},
\[
\frac{C_*^2}{784}-1\ \le\ \pdim_\varepsilon(d)\ \le\ 2\Bigl\lceil\frac{24\,C_*^2\ln N}{\varepsilon^2}\Bigr\rceil\qquad(\varepsilon<3/5).
\]
So if a bound $\pdim_\varepsilon\le K\,\varphi(C_*)\ln N$ with a non-decreasing function $\varphi$ holds for all metrics and some fixed $\varepsilon<3/5$, then $\varphi(7\sqrt N)\ge(N/16-1)/(K\ln N)$ for all $N\ge1000$, and by monotonicity $\varphi(C)\ge cC^2/\ln C$ for all sufficiently large $C$, with $c>0$ depending on $K$. In particular, $\varphi(C)=C^a$ requires $a\ge2$. Only the upper bound $C_*\le7\sqrt N$ is used here. The factor $\varepsilon^{-2}\ln N$ between the two sides remains, and on this family the upper bound exceeds the trivial $N-1$. The statement is restricted to $\varepsilon<3/5$: for $\varepsilon>3/5$ a metric with values $1$ and $2$ is approximated by a multiple of $\mat{J}-\mat{I}$ and has $\pdim_\varepsilon=O(\log N)$ whatever its $C_*$ is. The conclusion on the exponent relies on the upper bound $C_*\le7\sqrt N$ of Theorem~\ref{thm:Cfamily}.

At the other end of the scale, the uniform metric is Euclidean and has $C_*=1$. Lemma~\ref{lem:deng} gives $\pdim_\varepsilon\le2\lceil24\ln N/\varepsilon^2\rceil$, while Proposition~\ref{prop:log-lower} gives $\pdim_\varepsilon>c(\varepsilon)\ln N-2$. For $1/\sqrt N\le\varepsilon<1/2$, \cite[Theorem~2.1]{Alon2009} improves the lower bound to order $\ln N/(\varepsilon^2\ln(1/\varepsilon))$. So the factor $\ln N$ in the bound of Lemma~\ref{lem:deng} is needed when $C$ is bounded.

The argument of Theorem~\ref{thm:Cfamily} works at every scale. Let $N\ge2$, let $\Delta\le N-1$ with $N\Delta$ even, and let $0\le\varepsilon<3/5$. Applying Theorem~\ref{thm:girth} with $\ell=2$ to a $\Delta$-regular graph and using Corollary~\ref{cor:graph}(1), we obtain a graph of maximum degree at most $\Delta$ whose two-valued metric has $\pdim_\varepsilon>\Delta/16$ and $C_*\le2\sqrt{16+9\Delta}$.

\begin{remark}
The quantity $\max_u(\mat{Y}'+\mat{Z}')_{uu}$ minimized over decompositions $\mat{B}=\mat{Y}'-\mat{Z}'$ into positive semidefinite matrices is the factorization norm $\gamma_2(\mat{B})$ of a symmetric matrix, and Lemmas~\ref{lem:deng} and~\ref{lem:rows} are counterparts of the bounds relating $\gamma_2$ to the approximate rank of sign matrices~\cite{LMSS2007,LeeShraibman2009,AlonApproxRank}. In particular, the mechanism of Lemma~\ref{lem:deng} is that of~\cite[Corollary~1.5]{AlonApproxRank}.
\end{remark}

\section{Discussion and open questions}\label{sec:discussion}

\paragraph{Summary.} Theorem~\ref{thm:upper} bounds the dimension of an $\varepsilon$-representation of an arbitrary metric for $\varepsilon>4/5$ in terms of the local ratios $R_k(u)/\nu(u)$. For metrics with distances in $[1,\Phi]$, $\Phi\ge5$, it implies the upper bound in the middle range of Theorem~\ref{thm:main}. By that theorem the worst-case dimension is linear in $N$ for $\varepsilon<4/5$, of order $\sqrt N$ up to a factor $O(\log N)$ for $4/5<\varepsilon<12/13$, and of order $\log N$ for $\varepsilon>(\Phi^2-1)/(\Phi^2+1)$. For all metrics we have the lower bounds of Corollary~\ref{cor:lower}, and Corollary~\ref{cor:hadamard} and Proposition~\ref{prop:paley} give explicit linear examples for $\varepsilon<4/5$. For configurations with null intervals the worst-case dimension is $N-1$ for every $\varepsilon<1$ (Theorem~\ref{thm:null}). The parameter $C_*$ controls the dimension from above by Lemma~\ref{lem:deng}, and its square is a lower bound up to constants on the family of Theorem~\ref{thm:Cfamily} for $\varepsilon<3/5$.

\paragraph{Open questions.}
\begin{enumerate}
\item \emph{The boundary values.} What is the order of $F_{\Phi,\varepsilon}(N)$ at $\varepsilon=4/5$ and at $\varepsilon=(\Phi^2-1)/(\Phi^2+1)$? At $\varepsilon=4/5$ we only know $\sqrt{N/8}/64<F_{\Phi,4/5}(N)\le N-1$, by monotonicity in $\varepsilon$. For $\Phi=5$ and $\varepsilon=12/13$ we only know that the dimension is at least of order $\log N$ and at most of order $\sqrt N\log N$.
\item \emph{General metrics.} Is the worst-case dimension $O(\sqrt N\,\mathrm{polylog}\,N)$ for all $N$-point metrics when $4/5<\varepsilon<12/13$? By Theorem~\ref{thm:upper} and Remark~\ref{rem:barrier} the remaining difficulty lies in metrics for which $\rho_k$ is large for all $k$ of order $\sqrt N$, that is, metrics with several scales of distances among the $\sqrt N$ nearest points of some point.
\item \emph{Intermediate range for larger spread.} For $\Phi>5$ and $12/13\le\varepsilon\le(\Phi^2-1)/(\Phi^2+1)$ the upper bound is of order $\sqrt N\log N$, while the lower bounds of Corollary~\ref{cor:lower} applicable to distances in $[1,\Phi]$ have smaller exponents; for example, of order $N^{1/3}$ for $\Phi\ge7$ and $\varepsilon<24/25$. In general the lower bounds for $\varepsilon<\varepsilon_\ell$ depend on the largest number of edges of a graph of girth greater than $\ell$.
\item \emph{The local ratio.} Is the factor $\rho_k^4$ in Theorem~\ref{thm:upper}, or the factor $\Phi^2$ in Corollary~\ref{cor:spread}, necessary?
\item \emph{The bound of Lemma~\ref{lem:deng}.} Can the factor $\varepsilon^{-2}\log N$ between the two sides of the inequality after Theorem~\ref{thm:Cfamily} be removed, in either direction?
\end{enumerate}

\paragraph{Related problems.} If one prescribes which squared intervals vanish, the relative error forces these to vanish exactly, and the dimension is bounded below, up to an additive constant, by the minimum rank of a symmetric matrix with this zero pattern. Theorem~\ref{thm:null} shows that already for the simplest zero pattern the dimension cannot be lowered at all. For the minimum rank with free diagonal, linear lower bounds for almost all graphs were proved in~\cite{HHMS2010} by counting zero patterns. Exact representations of graphs in $\R^{p,q}$ by two values of the squared interval are studied in~\cite{NSS2023}. The relation to sign-rank is described in Remark~\ref{rem:signrank}: for bipartite patterns $\pdim_\varepsilon+2$ is at least the sign-rank of the pattern, but for explicit matrices only the approximate rank gives a linear bound.

\end{document}

%% file: figures/fig-thresholds.tex
\begin{tikzpicture}[x=1cm,y=1cm]
  \node[anchor=west,font=\small\bfseries] at (0.0,4.45) {\figTup};
  \node[draw=figaccent,line width=0.8pt,fill=figaccent!8,rounded corners=2pt,align=center,minimum width=3.45cm,minimum height=0.85cm] at (2.15,3.35) {$\Theta(N)$};
  \node[draw=figaccent,line width=0.8pt,fill=black!4,rounded corners=2pt,align=center,minimum width=3.7cm,minimum height=0.85cm] at (6.15,3.35) {$\Omega(\sqrt N)$\\$O(\sqrt N\log N)$};
  \node[draw=figaccent,line width=0.8pt,fill=figaccent!8,rounded corners=2pt,align=center,minimum width=3.45cm,minimum height=0.85cm] at (10.1,3.35) {$\Theta(\log N)$};
  \draw[axis] (0.45,2.65) -- (11.95,2.65) node[right] {$\varepsilon$};
  \draw[thinline] (0.75,2.52)--(0.75,2.78);
  \draw[thinline] (4.15,2.52)--(4.15,2.78);
  \draw[thinline] (8.15,2.52)--(8.15,2.78);
  \draw[thinline] (11.5,2.52)--(11.5,2.78);
  \node[below] at (0.75,2.52) {$0$};
  \node[below] at (4.15,2.52) {$4/5$};
  \node[below] at (8.15,2.52) {$12/13$};
  \node[below] at (11.5,2.52) {$1$};
  \draw[thinline] (4.15,2.44)--(4.15,2.86);
  \draw[thinline] (8.15,2.44)--(8.15,2.86);
  \node[anchor=west,font=\small] at (0.0,1.62) {\figTnote};

  \node[anchor=west,font=\small\bfseries] at (0.0,0.92) {\figTlow};
  \node[draw=black!60,line width=0.7pt,fill=white,rounded corners=2pt,align=center,text width=11.4cm,minimum height=0.85cm] at (5.75,0.18) {\figTlowtext};
\end{tikzpicture}

%% file: figures/fig-landmarks.tex
\begin{tikzpicture}[x=1cm,y=1cm]
  \node[font=\small\bfseries] at (1.85,1.60) {\figLa};
  \draw[figaccent!18,fill=figaccent!6,rounded corners=3pt] (0.15,0.02) rectangle (1.68,0.60);
  \node[pt,label=below:$u$] (u) at (0.35,0.30) {};
  \node[pt] at (0.75,0.38) {};
  \node[pt] at (1.08,0.22) {};
  \node[pt] at (1.40,0.37) {};
  \node[font=\scriptsize] at (0.94,1.00) {$B(u)$};
  \node[draw,fill=figaccent,inner sep=3pt,label=below:$p(u)$] (p) at (2.25,0.30) {};
  \node[pt,label=below:$v$] (v) at (3.23,0.30) {};
  \draw[accent] (u) -- (p);
  \draw[aux] (p) -- (v);
  \node[font=\scriptsize,fill=white,inner sep=1pt] at (1.56,0.65) {$r_u$};

  \draw[aux] (3.83,-0.62) -- (3.83,1.15);

  \node[font=\small\bfseries] at (6.28,1.60) {\figLb};
  \node[draw,rounded corners=2pt,fill=white,line width=0.8pt,
        minimum width=1.10cm,minimum height=0.76cm,align=center] (T) at (4.72,0.35) {$\mat{T}$};
  \node[font=\Large] at (5.47,0.35) {$+$};
  \node[draw,rounded corners=2pt,fill=figaccent!7,line width=0.8pt,
        minimum width=1.10cm,minimum height=0.76cm,align=center] (E) at (6.28,0.35) {$\mat{E}$};
  \node[font=\Large] at (7.08,0.35) {$=$};
  \node[draw,rounded corners=2pt,fill=white,line width=0.8pt,
        minimum width=1.12cm,minimum height=0.76cm,align=center] (A) at (7.91,0.35) {$\mat{A}^{*}$};
  \node[font=\scriptsize,align=center,text width=1.8cm,anchor=north,execute at begin node=\strut] at (4.72,-0.14) {\figLlow};
  \node[font=\scriptsize,align=center,text width=1.8cm,anchor=north,execute at begin node=\strut] at (6.28,-0.14) {\figLsparse};
  \node[font=\scriptsize,align=center,text width=1.8cm,anchor=north,execute at begin node=\strut] at (7.91,-0.14) {\figLtarget};

  \draw[aux] (8.75,-0.62) -- (8.75,1.15);

  \node[font=\small\bfseries] at (11.15,1.60) {\figLc};
  \node[draw,rounded corners=2pt,fill=white,line width=0.8pt,
        align=center,minimum width=1.48cm,minimum height=0.76cm] (G) at (9.90,0.35)
        {$\widetilde{\mat{G}}$};
  \draw[accent,-{Stealth[length=5pt]}] (10.72,0.35) -- (11.12,0.35);
  \node[draw,rounded corners=2pt,fill=figaccent!7,line width=0.8pt,
        align=center,minimum width=2.05cm,minimum height=0.76cm] (K) at (12.20,0.35)
        {$\mathcal{K}(\widetilde{\mat{G}})$};
  \node[font=\scriptsize,align=center,text width=1.8cm,anchor=north,execute at begin node=\strut] at (9.90,-0.14) {\figLrankctl};
  \node[font=\scriptsize,align=center,text width=1.8cm,anchor=north,execute at begin node=\strut] at (12.20,-0.14) {\figLsq};
\end{tikzpicture}

%% file: figures/fig-tradeoff.tex
\begin{tikzpicture}
\begin{axis}[
  width=0.86\linewidth,
  height=5.2cm,
  xmin=0.35, xmax=2.35,
  ymin=0, ymax=4.4,
  axis lines=left,
  xlabel={$x=k/k_*$},
  ylabel={\figKy},
  xtick={0.5,1,1.5,2},
  ytick={1,2,3,4},
  tick label style={font=\small},
  label style={font=\small},
  grid=major,
  grid style={black!12},
  legend style={font=\small,draw=none,fill=white,at={(0.98,0.98)},anchor=north east},
  clip=true
]
  \addplot[accent,domain=0.35:2.35,samples=100,line width=1.2pt] {1/x};
  \addlegendentry{\figKa}
  \addplot[black,dashed,domain=0.35:2.35,samples=100,line width=1.0pt] {x};
  \addlegendentry{\figKb}
  \addplot[black!70,densely dotted,domain=0.35:2.35,samples=100,line width=1.5pt] {x+1/x};
  \addlegendentry{\figKsum}
  \addplot[aux] coordinates {(1,0) (1,3.3)};
\end{axis}
\end{tikzpicture}